\date{}
\newtheorem{theorem}{Theorem}[section]
\newtheorem{lemma}[theorem]{Lemma}
\theoremstyle{definition}
\theoremstyle{remark}
\newtheorem{remark}[theorem]{Remark}
\numberwithin{equation}{section}
\begin{document}
	\title{Local well-posedness for the Zakharov system in dimension $d\leqslant 3$}
	\author{Akansha Sanwal}
	\maketitle
	\vspace{-10mm}
	\begin{abstract}
		The Zakharov system in dimension $d\leqslant 3$ is shown to be locally well-posed in Sobolev spaces $H^s \times H^l$, 
		%(for a wide range of exponents $s$ and $l$)
		 extending the previously known result. We construct new solution spaces by modifying the $X^{s,b}$ spaces, specifically by introducing temporal weights. We use contraction mapping principle to prove local well-posedness in the same. The result obtained is sharp up to endpoints.
	\end{abstract} 

	\section{Introduction}
We consider the Cauchy problem for the Zakharov system
\begin{align}
	i\partial_t u +\Delta u& = nu\\ \nonumber
	\partial_t^2n-\Delta n &= \Delta |u|^2
\end{align}
with initial data
\begin{equation}
	u(x,0)= u_0(x) ,~~~n(x,0)= n_0(x), ~~~\partial_t{n}(x,0)= n_1(x),
\end{equation}
where $(u_0,n_0,n_1) \in H^s \times H^l \times H^{l-1}$.

Here $u:  \mathbb{R}^{d+1} \rightarrow  \mathbb{C}$ and $n: \mathbb{R}^{d+1} \rightarrow  \mathbb{R}$ denote the slowly changing amplitude of a high frequency electric field and the deviation of ion density from its equilibrium, respectively. This system was derived by Zakharov \cite{zakharov1972collapse} to model the propagation of Langmuir waves in a plasma.

	\subsection{Known results}
	Local well-posedness in $H^2 \times H^1\times L^2$ was shown in \cite{zbMATH00597438}
	 along with some smoothing for the Schr{\"o}dinger part for $d\leqslant 3$. The result was improved by Bourgain and Colliander \cite{zbMATH00972697}
	  who proved well-posedness of the two and three dimensional system in the energy space ($H^1 \times L^2$) using Fourier restriction norms and contraction mapping. Their result also includes small data global well-posedness if the $H^1$ norm of the Schr\"odinger data is sufficiently small. In \cite{zbMATH01117825}, local well-posedness for dimension $d\geqslant 1$ was shown for a range of indices $(s,l)$ depending on $d$ by refining the contraction mapping argument in Fourier restriction spaces. In particular, for the two and three dimensional system, local well-posedness is proved in $H^s \times H^l$ for $(s,l)$ satisfying $l\leqslant s \leqslant l+1$, $l\geqslant 0$ and $2s-l\geqslant 1$. For the 1D system, the result covers the range $-\frac{1}{2}\leqslant s-l\leqslant 1, 2s-l\geqslant \frac{1}{2}$. The best available result for local well-posedness of the system in three dimensions is proved in \cite{zbMATH05964196}
	 which covers the range $l> -\frac{1}{2}$, $l\leqslant s \leqslant l+1$ and $2s-l>\frac{1}{2}$ (see Figure 1) while that in two space dimensions is proved in \cite{zbMATH05555745} 
	 for $(s,l) = (0,-\frac{1}{2})$. Both the aforementioned results exploit the transversality of the characteristic hypersurfaces in the resonant interactions to obtain sharp multilinear estimates. These estimates, referred to as generalisations of the Loomis Whitney inequality \cite{zbMATH03054426,zbMATH05769043,zbMATH02207822}, shall play a vital role in proving the non-linear estimates in sections 3 and 4.
	  For the 1D system, in \cite{zbMATH01690751}, global well-posedness without any smallness assumption on the initial data is proved in $H^2\times L^2$ for $\frac{9}{10}<s<1$.  In \cite{zbMATH05264249}, local well-posedness is shown in spaces $\widehat{H^{s,p}}$ for $1<p<2$. Here, the regularity of the initial data can be lowered so that $s<0$ and $l<-\frac{1}{2}$ by using modified $X^{s,b}$ spaces. Global well-posedness using mass conservation is proved in \cite{zbMATH05318537} for $(s,l)=(0,-\frac{1}{2})$ which is the largest space where the system is locally well-posed. In \cite{zbMATH06529170}, global well-posedness and scattering results are proved.\\
	  
	  We throw some light on ill-posedness results now.  In \cite{zbMATH05142144}, for the 1D system, it is shown that the constraint $2s-l\geqslant \frac{1}{2}$ is optimal in the sense that norm inflation occurs otherwise. For $l<-\frac{3}{2}$ and $s=0$, it is shown that the data to solution map fails to be uniformly continuous from $H^s(\mathbb{R})\times H^l(\mathbb{R}) \times H^{l-1}(\mathbb{R})$ to $C([0,T];H^s(\mathbb{R}) )\times C([0,T] ;H^l(\mathbb{R})) \times C([0,T];H^{l-1}(\mathbb{R}))$. This nearly matches the 1D result of \cite{zbMATH01955363} where the authors prove ill-posedness for $s<0$ and $l\leqslant -\frac{3}{2}$. The failure of $C^2$ differentiability of the data to solution map is proved for $l<-\frac{1}{2}$. In \cite[section 9]{candy2019zakharov}, the authors prove that for a general dimension $d\geqslant 1$, the flow map is not $C^2$ for $s,l$ and $d$ satisfying
	  $l<\frac{d}{2}-2, ~s-l> 2, ~2s-l<\frac{d-2}{2}$ or $s-l<-1$. For $d=3$, this is the complement of the region described by $(1.3)$ up to the boundaries. For $d=2$, stronger counterexamples in \cite[section 6]{zbMATH05555745} prove that the flow map is not $C^2$ for $2s-l<\frac{1}{2}$ or $l<-\frac{1}{2}$. In \cite[Proposition 3.2]{zbMATH01117825}, using the close relation of the system $(1.1)-(1.2)$ to the cubic non-linear Schr\"odinger equation, the authors prove that the data to solution map for the system fails to be Lipschitz continuous at the point $(s,l) = (\frac{1}{2}, -\frac{1}{2})$. Using power series expansion for the solutions, as done in \cite{zbMATH07040496} for the Schr\"odinger equation, norm inflation results have been proved for the Zakharov system in dimension $d\geqslant 1$ in \cite{grubemastersthesis}. In particular, for $d=3$ it is proved that all the boundaries but $l=-\frac{1}{2}$ are sharp. The interested reader can look \cite{domingues2019note} for a summarised account of ill-posedenss results for the system $(1.1)-(1.2)$ in dimension $d\geqslant 1$. \\

	In this article, we provide a unified local well-posedness result for the system $(1.1)-(1.2)$ in $d\leqslant3$ 
	which holds in the region $(1.3)$ (see Figures 1 and 2) using the contraction mapping principle in modified $X^{s,b}$ spaces. Our results reads
	% The region $(1.3)$ is the optimal in the sense that the flow map for $(2.1)$ fails to be $C^2$ at the origin for values outside $(1.3)$ (see Section 9 in \cite{3}). We modify $X^{s,b}$ spaces by introducing weights which depend upon the temporal frequencies.
	\begin{theorem}
		The system $(1.1)$ with initial conditions $(1.2)$ is locally well-posed for $d\leqslant 3$ provided
		\begin{equation}
		l > -\frac{1}{2}, \quad \max\Big\{l-1,\frac{l}{2}+\frac{1}{4}\Big\} < s <l+2. 
		\end{equation}
	\end{theorem}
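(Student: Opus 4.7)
My plan is to recast the problem as a first-order system and then run a Picard iteration in a carefully chosen pair of modified Fourier restriction spaces. First, I would diagonalize the wave equation by setting $N_\pm = n \pm i\langle\nabla\rangle^{-1}\partial_t n$ (with a low-frequency regularized modulus to avoid singularities at zero), which turns \eqref{1.2} into a pair of half-wave equations $(i\partial_t \mp \langle\nabla\rangle)N_\pm = \pm\langle\nabla\rangle^{-1}\Delta(|u|^2)$ coupled to the Schr\"odinger equation $i\partial_t u + \Delta u = \tfrac{1}{2}(N_+ + N_-) u$. Duhamel's formula then expresses $(u, N_\pm)$ as a fixed point of a nonlinear map $\Phi$, and local well-posedness reduces to showing that $\Phi$ is a contraction on a small ball in a suitable space.

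The critical issue is that at the sharp boundary $2s-l=\tfrac{1}{2}$ one is forced to work at the endpoint $b=\tfrac{1}{2}$, where the classical $X^{s,b}$ spaces do not embed into $C_tH^s_x$ and the usual $\epsilon$-room in $b$ is lost. I would therefore introduce solution spaces $X^{s,b}_T$ obtained by inserting a temporal weight (e.g.\ a smooth cutoff $\chi(t/T)$ together with a modulation weight $\langle T(\tau\pm|\xi|)\rangle^b$ or $\langle T(\tau+|\xi|^2)\rangle^b$) into the Fourier restriction norm, chosen so that: (i) the free propagators are bounded into the space uniformly in $T$, (ii) the Duhamel operator gains a positive power of $T$, and (iii) one can take $b=\tfrac{1}{2}$ while retaining continuity in time. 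The linear/energy estimates for these modified spaces are largely a bookkeeping exercise once the weight is chosen correctly.

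The heart of the matter, and the step I expect to be hardest, is proving the two bilinear estimates
\begin{align*}
\|Nu\|_{X^{s,-\frac{1}{2}}_{S,T}} &\lesssim \|N\|_{X^{l,\frac{1}{2}}_{W,T}}\,\|u\|_{X^{s,\frac{1}{2}}_{S,T}}, \\
\||\nabla|(u\bar v)\|_{X^{l-1,-\frac{1}{2}}_{W,T}} &\lesssim \|u\|_{X^{s,\frac{1}{2}}_{S,T}}\,\|v\|_{X^{s,\frac{1}{2}}_{S,T}},
\end{align*}
uniformly (or with a small power of $T$) in the full range \eqref{1.3} up to endpoints. I would decompose dyadically in frequency and modulation and split into the cases (high$\times$high)$\to$low, (high$\times$low)$\to$high, and resonant versus non-resonant interactions; in each case, the two characteristic hypersurfaces $\{\tau = -|\xi|^2\}$ and $\{\tau = \pm|\xi|\}$ are transverse away from a measure-zero set, and the required $L^2_{t,x}$ convolution estimates follow from the generalized Loomis--Whitney inequalities of \cite{zbMATH05769043, zbMATH02207822} exactly as exploited in \cite{zbMATH05964196, zbMATH05555745}. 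The most delicate regime is the resonant high-high-to-low interaction driving the wave nonlinearity, which dictates the constraint $s > l/2 + 1/4$; the lower bound $l > -1/2$ and the upper bound $s < l+2$ are used to balance the Schr\"odinger and wave regularities in the other two regimes. The temporal weight is arranged so that every bilinear estimate gains a positive power $T^{\delta}$, which absorbs the loss of $\epsilon$-room and allows a standard contraction argument to close.
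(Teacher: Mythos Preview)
Your overall architecture---first-order reduction, contraction in modified Fourier restriction spaces, bilinear estimates via dyadic decomposition and Loomis--Whitney/transversality---matches the paper. However, the specific modification you propose is not the one the paper uses, and this matters for whether the argument actually reaches the full range $(1.3)$.

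You diagnose the central difficulty as being forced to $b=\tfrac12$ and propose $T$-rescaled modulation weights like $\langle T(\tau+|\xi|^2)\rangle^b$ to recover contraction. That is not what is going on here. The paper works with $\theta=\tfrac12+$ throughout (never at the endpoint), and the small power of $T$ for contraction is obtained in the completely standard way from the $X^{s,b}$ localization lemma $(2.8)$ plus an elementary interpolation for the $L^\infty_tL^2_x$ component. The $T$-dependent weights you describe would help with contraction but do nothing to enlarge the admissible set of $(s,l)$ beyond what \cite{zbMATH05964196,zbMATH05555745} already give.

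The actual innovation that pushes the range from the old strip $l\le s\le l+1$ out to $\max\{l-1,\tfrac{l}{2}+\tfrac14\}<s<l+2$ is a \emph{spatial-frequency-dependent} temporal weight: for a piece at spatial frequency $\lambda$, the norm treats low modulation $(\ll\lambda^2)$ as ordinary $X^{s,\theta}$ but at high modulation $(\gtrsim\lambda^2)$ inserts a weight $(\lambda+|\partial_t|)^a$ together with altered regularity exponents $s',\theta',\beta,b$ that are chosen \emph{piecewise in the $(s,l)$-plane} (roughly: $a>0$ when $s-l\ge 1$, $b>0$ when $s-l\le 0$, and $\beta$ adjusts the wave high-modulation regularity accordingly). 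This is what allows a controlled transfer of regularity between the Schr\"odinger and wave pieces in the off-diagonal interactions and is what produces the new constraints $s-l<2$ and $s-l>-1$. Your proposal contains no analogue of this mechanism, so as written it would close only in the previously known region; the step where you assert the bilinear estimates hold ``in the full range $(1.3)$ up to endpoints'' is precisely where the argument would fail without these frequency-adapted weights.
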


\begin{remark}
%\textbf{(i)}
%		 The flow map for the system $(1.1)-(1.2)$ is defined by
%	\begin{align*}
%		f: B_r \rightarrow H^s(\mathbb{R}^d)\times H^l(\mathbb{R}^d) \times H^{l-1}(\mathbb{R}^d), \quad
%		(u_0, n_0,n_1)\mapsto (u(t),n(t),\partial_t n(t)),
%	\end{align*}
%	where $B_r$ denotes $\{(u_0, n_0, n_1): \Vert u_0\Vert_{H^s} + \Vert n_0\Vert_{H^l}+\Vert n_1\Vert_{H^{l-1}}<r\}$. The contraction mapping principle guarentees that $f$ is analytic. If one can prove that the map $f$ is not $C^k$ for some $k$, it shows that well-posedness
		\textbf{(i)} From the existing literature, it follows that $(s,l)=(0,-\frac{1}{2})$ is the optimal point at which local well-posedness can be proved by a contraction mapping argument. This also corresponds to the scaling critical regularity for the three dimensional system, \cite[Section 2, pp 7-8]{zbMATH01117825}. Our result does not cover this because of the choice of our function spaces and losses which arise from the application of the estimates in lemma 2.4 and 2.6. Hence the endpoint remains an open problem for $d=3$.\\
		\textbf{(ii)} The regularity $(s,l) = (1,0)$ which corresponds to the energy space for the system $(1.1)-(1.2)$ lies on the boundary in the results covered by \cite{zbMATH01117825,zbMATH05964196}. The inclusion of the same as an interior point of the region of well-posedness is noteworthy.\\
		%	\textcolor{blue}{	\textbf{(ii)} Partial boundaries of the region in $(1.3)$ can be covered by using only the product estimate (lemma 2.9) to prove the estimates. This is precisely the region $s>1$ and $l>\frac{3}{2}$. Since the low-high modulation interaction for the wave estimates cannot be controlled optimally without the bilinear estimate $(2.15)$, we cannot achieve the lower right boundary $s-l=-1$.} (Boundaries cannot be achieved now.)\\
		\textbf{(iii)} Using the normal form approach of \cite{zbMATH06369466}, one can improve the local well-posedness result for the system $(1.1)-(1.2)$. This was achieved in \cite{rubelmastersthesis} for $d\geqslant 2$. However, the result does not cover the negative regularity region for the wave solution.\\
	%	\textbf{(iv)} \textcolor{blue}{Norm inflation results of [FG].}
	\end{remark}
\vspace{-5mm}

\begin{figure}[H]
	\centering
	\begin{minipage}{0.49\textwidth}
		\centering
		\begin{tikzpicture}[scale=0.84]
		\draw[thick] (-1.1,2)--(8.1,2);
		\draw[thick] (-1.1,4)--(8.1,4);
		\draw[thick] (-1.1,6)--(8.1,6);
		\draw[thick] (-1.1,8)--(8.1,8);             %% All parallel lines to l axis
		
		\draw[thick] (2,0)--(2,8.1);
		\draw[thick] (4,0)--(4,8.1);
		\draw[thick] (6,0)--(6,8.1);
		\draw[thick] (8,0)--(8,8.1);              %%  All parallel lines to s axis
		
		\filldraw[color=gray, opacity=0.5] (6,8)--(-1,1)--(-1,0)--(1,1)--(8,8);      %% shaded area
		\draw[ thin, dashed] (-1,1)--(-1,0)--(1,1);     %% shaded area
		\draw[thin](6,8)--(-1,1);
		\draw[thin](1,1)--(8,8);
		
		\draw[ultra thick] (-1.1,0)--(0,0)node[align=center, below] {0}--(2,0)node[align=center, below] {1}--(4,0)node[align=center, below] {2}--(6,0)node[align=center, below] {3}--(8,0)node[align=center, below] {4}--(8.1,0);     %% l axis
		
		\draw[ultra thick] (0,0)--(0,2) node[align=center, left] {1}--(0,4) node[align=center, left] {2}--(0,6) node[align=center, left] {3}--(0,8) node[align=center, left] {4}--(0,8.1);     %% s axis

		%%rotated labels
			
			\node[align=center] at (1.9,4.3) {\rotatebox{45}{$s-l=1$}};
				\node[align=center] at (3.8,3.4) {\rotatebox{45}{$s=l$}};
					\node[align=center] at (0.3,0.4) {\rotatebox {25}{\tiny $2s-l=\frac{1}{2}$}};
						\node[align=center] at (-1.3,0.55) {\rotatebox{90}{\tiny$l=-\frac{1}{2}$}};  
		
		\node[align=center, above] at (0.3,7.5) {{\boldmath $s$}};  %% s axis name
		\node[align=center, right] at (7.5,0.3) {{\boldmath $l$}}; %% l axis name
		\end{tikzpicture}
		\caption{Region of well-posedness for $d=3$ in \cite{zbMATH05964196}}
	\end{minipage}
	\begin{minipage}{0.49\textwidth}
		\centering
		\begin{tikzpicture}[scale=0.84]
		\draw[thick] (-1.1,2)--(8.1,2);
		\draw[thick] (-1.1,4)--(8.1,4);
		\draw[thick] (-1.1,6)--(8.1,6);
		\draw[thick] (-1.1,8)--(8.1,8);             %% All parallel lines to l axis
		
		\draw[thick] (2,0)--(2,8.1);
		\draw[thick] (4,0)--(4,8.1);
		\draw[thick] (6,0)--(6,8.1);
		\draw[thick] (8,0)--(8,8.1);              %%  All parallel lines to s axis
		
		\filldraw[color=gray, opacity=0.5] (4,8)--(-1,3)--(-1,0)--(5,3)--(8,6)--(8,8)--(4,8);      %% shaded area
		\draw[thin, dashed] (4,8)--(-1,3)--(-1,0)--(5,3)--(8,6);      %% shaded area

		\draw[ultra thick] (-1.1,0)--(0,0)node[align=center, below] {0}--(2,0)node[align=center, below] {1}--(4,0)node[align=center, below] {2}--(6,0)node[align=center, below] {3}--(8,0)node[align=center, below] {4}--(8.1,0);     %% l axis
		
		\draw[ultra thick] (0,0)--(0,2) node[align=center, left] {1}--(0,4) node[align=center, left] {2}--(0,6) node[align=center, left] {3}--(0,8) node[align=center, left] {4}--(0,8.1);     %% s axis
		
%		\node[align=center] at (1.4,0.45) {$2s-l=\frac{1}{2}$};  
%		\node[align=center] at (6.5,3.5) {$s=l-1$};
%		\node[align=center] at (1,6.2) {$s=l+2$};           %% all equations
		
		\node[align=center, above] at (0.3,7.5) {{\boldmath$s$}};  %% s axis name
		\node[align=center, right] at (7.5,0.3) {{\boldmath $l$}}; %% l axis name
		%%rotated labels
			\node[align=center] at (1,5.3) {\rotatebox{45}{ $s-l=2$}};
		 
			\node[align=center] at (-1.3,1.2) {\rotatebox{90}{$l=-\frac{1}{2}$}};  
		%	\node[align=center] at (3.3,2.5) {\rotatebox{45}{\boldmath$s-l=-\frac{1}{2}$}}; 
			\node[align=center] at (6.4,3.9) {\rotatebox{45}{$s-l=-1$}}; 
			\node[align=center] at (1.4,0.7) {\rotatebox{26}{$2s-l=\frac{1}{2}$}}; 
		\end{tikzpicture}
		\caption{New region of well posedness for $d\leq 3$}
		
	\end{minipage}
	\text{The strips extend infinitely to the upper right in both the regions.}
\end{figure}

%\begin{figure}[H]
%	\centering
%	\begin{minipage}{0.48\textwidth}
%		\centering
%		\begin{overpic}[width=0.999 \textwidth,tics=10]{O.PNG}
%			
%		
%			\put (15,44){\begin{tiny}$s=l+1$\end{tiny}}
%			\put (39,32){\begin{tiny}$s=l$\end{tiny}}
%			\put (14,10.25){\begin{tiny}$2s-l=\frac{1}{2}$\end{tiny}}
%			
%		\end{overpic}
%	
%		\caption{Region of well-posedness in \cite{bejenaru2011convolutions} for $d=3$}
%	\end{minipage}\hfill
%	\begin{minipage}{0.48\textwidth}
%		\centering
%		\begin{overpic}[width=0.9999 \textwidth,tics=10]{N.PNG}
%			
%			%	\put (87,1) {\begin{tiny}$l$ \end{tiny}} 	
%			%	\put (10,58){\begin{tiny}$s$\end{tiny}}
%			\put (15,66){\begin{tiny}$s=l+2$\end{tiny}}
%			\put (67,41){\begin{tiny}$s=l-1$\end{tiny}}
%			\put (27,17){\begin{tiny}$2s-l=\frac{1}{2}$\end{tiny}}
%			
%		\end{overpic}
%		%	\includegraphics[width=0.9\textwidth]{3DN.PNG} % second figure itself
%		\caption{New region of well-posedness for $d\leqslant 3$}
%	\end{minipage}
%	\text{The strips extend infinitely to the upper-right in both the regions.}
%\end{figure}

	\subsubsection{Outline} In section 2, we introduce the notation and previously known estimates. We also define our function spaces and state the required properties of $X^{s,b}$ spaces therein. Known multilinear estimates are also enlisted. Sections 3 and 4 are devoted to the proof of the crucial multilinear estimates for the Schr{\"o}dinger and wave non-linearity, respectively. Finally, in section 5, we give a short proof of theorem 1.1.

	%\textcolor{red}{In the appendix, we give a short proof of the trilinear estimate $(2.8)$.}
	
%	\vspace{0.3cm}
\section{Notation and preliminaries}

\subsection{Reduced system}
For $|\nabla|$:=$\sqrt{-\Delta}$, using the transformation $v=n-i|\nabla|^{-1}\partial_t n $, the system $(1.1)$ formally reduces to the following  first order system:
\begin{align}
i\partial_t u +\Delta u& = uRe(v)\\ \nonumber
i\partial_t v+|\nabla| v &= -|\nabla| |u|^2.
\end{align}

We observe that $(u,v)$ solves $(2.1)$ iff $(u,Re(v))$ solves $(1.1)$.
Henceforth, we shall consider the system $(2.1)$ for our analysis.

\subsection{Fourier multipliers}
Let $\chi \in C_0^{\infty}(\mathbb{R})$ be non-negative such that $\chi (r) = 1$ for $|r| \leqslant 1$, $\chi (r) = 0$ for $|r|\geqslant 2$. Set $\phi(r) :=\chi(r)-\chi(2r)$ and $\phi_{\lambda}(r):=\phi(r/\lambda)$. Then
\begin{equation*}
\sum_{\lambda \in 2^{\mathbb{N}}\\,\lambda \geqslant 1}\phi_{\lambda}(r) =1 , ~~~\phi_1 = \chi.
\end{equation*}
For $\lambda \in 2^\mathbb{N}, \lambda > 1$, we define the Fourier multipliers
\begin{center}
	\begin{tabular}{cccc}
		$P_{\lambda} = \phi_{\lambda}(|\nabla|)$,& $P_{\lambda}^{(t)} = \phi_{\lambda}(|\partial_t|)$, $C_{\lambda} = \phi_{\lambda}(|i\partial_t +\Delta|)$, $Q_{\lambda} = \phi_{\lambda}(|i\partial_t \pm|\nabla||)$,
	\end{tabular}
\end{center}
and for $\lambda = 1$, we define
\begin{center}
	\begin{tabular}{cccc}
		$P_1 = \chi(|\nabla|)$,& $P_1^{(t)} = \chi(|\partial_t|)$, $C_1 = \chi (|i\partial_t +\Delta|)$, $Q_1 = \chi (|i\partial_t \pm|\nabla||)$.
	\end{tabular}
\end{center}
%	and for $\lambda = 1$, we define
%	\begin{center}
%		\begin{tabular}{cccc}
%			$P_1 = \chi (t)(|\nabla|)$,& $P_1^{(t)} = \chi (t)(|\partial_t|)$, $C_1 = \chi (t)(|i\partial_t +\Delta|)$, $Q_1 = \chi (t)(|i\partial_t +|\nabla||)$.
%		\end{tabular}
%	\end{center}
Thus, $P_{\lambda}$ is a Fourier multiplier which localises the spatial frequencies to the set $\{\frac{\lambda}{2} \leqslant |\xi| \leqslant 2\lambda \}$ while $P_{\lambda}^{(t)}$ localises the temporal frequencies to the set $\{\frac{\lambda}{2} \leqslant |\tau| \leqslant 2\lambda \}$. In a similar spirit, $C_{\lambda}$ and $Q_{\lambda}$ localise the space-time Fourier support to distances $\sim \lambda$ from the paraboloid and the cone, respectively.
More precisely, for $L \in 2^\mathbb{N}$ the space-time Fourier supports of $C_L, C_1, Q_L$ and $Q_1$ can be written as 
\begin{align*}
&\mathcal{S}_{L} = \Big\{(\tau,\xi)\in \mathbb{R} \times \mathbb{R}^d : \frac{L}{2}\leqslant|\tau+|\xi|^2|\leqslant 2L \Big\}\\
&\mathcal{S}_{1} = \Big\{(\tau,\xi)\in \mathbb{R} \times \mathbb{R}^d : |\tau+|\xi|^2|\leqslant 2 \Big\}\\
&\mathcal{W}_{L}^{\pm} = \Big\{(\tau,\xi)\in \mathbb{R} \times \mathbb{R}^d : \frac{L}{2}\leqslant|\tau\pm|\xi||\leqslant 2L \Big\}\\
&\mathcal{W}_{1}^{\pm} = \Big\{(\tau,\xi)\in \mathbb{R} \times \mathbb{R}^d : |\tau\pm|\xi||\leqslant 2 \Big\}.
\end{align*}
To restrict the Fourier support to larger sets, we use the notation
\begin{align*}
&P_{\leqslant\lambda} = \sum_{\mu \in 2^{\mathbb{N}},\mu \leqslant \lambda}\phi_{\lambda}(|\nabla|), \quad P_{\leqslant \lambda}^{(t)} = \sum_{\mu \in 2^{\mathbb{N}}, \mu \leqslant \lambda}\phi_{\lambda}(|\partial_t|),\\
&C_{\leqslant\lambda} = \sum_{\mu \in 2^{\mathbb{N}},\mu \leqslant \lambda}\phi_{\lambda}(|i\partial_t +\Delta|), \quad Q_{\leqslant\lambda} = \sum_{\mu \in 2^{\mathbb{N}},\mu \leqslant \lambda}\phi_{\lambda}(|i\partial_t \pm|\nabla||).
\end{align*}
For $\lambda \in 2^{\mathbb{N}}$, we use the shorthand $f_{\lambda}=P_{\lambda}f$.\\
We use the notation $A\lesssim B$ to indicate $A\leqslant cB$, for a constant $c>0$ and $A\sim B$ when both $A\lesssim B$ and $B\lesssim A$ hold. We use $A\ll B$ to denote $A\leqslant cB$ for a constant $c$ much smaller than 1.

	\subsection{Function spaces}
The space-time Fourier transform of a function $f$ is denoted by $\mathcal{F}(f)$, where $(t,x)$ are the physical space variables and $(\tau,\xi)$ the corresponding Fourier variables. We use angled brackets to denote $\langle x\rangle :=(1+x^2)^{\frac{1}{2}}$, and $x+$ denotes a number slightly bigger than $x$ in the following. \\

We set $\theta:=\frac{1}{2}+$. Given $s,l \in \mathbb{R}$, we define the parameters $0\leqslant a, b<2-2\theta,\theta',s'$ and $\beta$ as follows:
\begin{align}\nonumber
&	a:=\begin{cases}
s-l-3+4\theta, & s-l\geqslant 1\\ 
0,& s-l <1
\end{cases}
\text{,} \quad
b:=\begin{cases}
0,& s-l>0\\
(l-s)+2\theta-1,& s-l\leqslant 0
\end{cases}
\text{, }\quad
\theta':=\begin{cases}
\theta~, s-l\geqslant 0\\
1~, s-l <0
\end{cases}
\text{,} \quad\\
&s':=\begin{cases}
s, &s-l\geqslant 0\\
s+2\theta-2, & s-l<0
\end{cases}
\quad\text{ and} \quad
\beta:=\begin{cases}
l+a+2\theta-1, &s-l\geqslant 1\\
l,  &-\frac{1}{2}\leqslant s-l< 1\\
s-\theta+1, &s-l<-\frac{1}{2}
\end{cases}.
\end{align}

For $\lambda \in 2^{\mathbb{N}}$, $s,a,b, \theta \in \mathbb{R}$, we control the frequency localised Schr{\"o}dinger component of the Zakharov evolution by the following
\begin{equation*}
\Vert u_{\lambda} \Vert_{S_{\lambda}^{s,l,\theta}} = \lambda^s\Vert u_{\lambda}\Vert_{L_t^{\infty}L_x^2} + \lambda^{s}\Vert \langle \tau+|\xi|^2\rangle^{\theta}\mathcal{F}(C_{\ll \lambda^2}u_{\lambda})\Vert_{L_{\tau,\xi}^2}
+\lambda^{s'-2a+b}\Vert \langle \tau+|\xi|^2\rangle^{\theta'} \mathcal{F}((\lambda+|\partial_t|)^a C_{\gtrsim \lambda^2}u_{\lambda})\Vert_{L_{\tau,\xi}^2}.
\end{equation*}
To control the frequency localised Schr{\"o}dinger non-linearity, we define
\begin{align*}	\Vert F_{\lambda} \Vert_{N_{\lambda}^{s,l,\theta-1}} &= \lambda^{s+2\theta-3}\Vert  P_{\ll \lambda^2}^{(t)}F_{\lambda}\Vert_{L_t^{\infty}L_x^2}  + \lambda^{s}\Vert \langle \tau+|\xi|^2\rangle^{\theta-1} \mathcal{F}(C_{\ll \lambda^2}F_{\lambda})\Vert_{L_{\tau,\xi}^2}\\
&\quad 	+\lambda^{s'-2a+b}\Vert \langle \tau+|\xi|^2\rangle^{\theta'-1}\mathcal{F}((\lambda+|\partial_t|)^a C_{\gtrsim \lambda^2}F_{\lambda})\Vert_{L_{\tau,\xi}^2}.
\end{align*}
For $l, \beta,a, \theta \in \mathbb{R}$, the evolution of the frequency localised wave component is controlled by the following norm
\begin{equation*}
\Vert v_{\lambda}\Vert_{W_{\lambda}^{l,s,\theta}} =  \lambda^{l}\Vert v_{\lambda}\Vert_{L_t^{\infty}L_x^2}+ \lambda^{l-a}\Vert \langle \tau-|\xi|\rangle^{\theta}\mathcal{F}((\lambda+|\partial_t|)^a Q_{\ll  \lambda^2}v_{\lambda})\Vert_{L_{\tau,\xi}^2}+  \lambda^{\beta-1}\Vert \langle \tau-|\xi|\rangle \mathcal{F}(Q_{\gtrsim \lambda^2}v_{\lambda})\Vert_{L_{\tau,\xi}^2},
\end{equation*}
and the RHS of the wave equation is controlled by
\begin{equation*}
\Vert G_{\lambda}\Vert_{R_{\lambda}^{l,s,\theta-1}} = \lambda^{l+2\theta-3}\Vert G_{\lambda}\Vert_{L_t^{\infty}L_x^2} + \lambda^{l-a}\Vert \langle \tau-|\xi|\rangle^{\theta-1}\mathcal{F}((\lambda+|\partial_t|)^a Q_{\ll  \lambda^2}G_{\lambda})\Vert_{L_{\tau,\xi}^2}
+ \lambda^{\beta-1}\Vert \mathcal{F}( Q_{\gtrsim \lambda^2}G_{\lambda})\Vert_{L_{\tau,\xi}^2}.
\end{equation*}
The parameters $a,b,s', \beta$ and $\theta'$ are required to prove the bilinear estimates in the full region $(1.3)$.
$a$ measures the loss of regularity for the Schr{\"o}dinger component in the low ($\ll\lambda^2$)  temporal frequency region as can be seen from the weight $m_S(\tau):=\big(\frac{\lambda+|\tau|}{\lambda^2}\big)^a, 0\leqslant a< 1$. Note that
\begin{equation*}
m_S(\tau)\sim \begin{cases}
\lambda^{-a}, ~~~|\tau|\lesssim \lambda,\\
1,~~~~~ |\tau|\sim \lambda^2,\\
\end{cases}\\
\text{and} ~~m_S(\tau)\gg 1, ~~~|\tau|\gg \lambda^2,
\end{equation*}
while \begin{equation*}
m_W(\tau):=\Big (\frac{\lambda+|\tau|}{\lambda}\Big)^a \gtrsim 1.
\end{equation*}
The parameter $b$ gives a gain in regularity to the Schr{\"o}dinger evolution in the high modulation regime and helps to achieve bilinear estimates for the wave non-linearity in the region $(1.3)$. $a$ and $b$ are not non-zero simultaneously and hence their sum is always less than $1$. It is this restriction on the upper bounds for the parameters $a$ and $b$ that does not allow us to achieve the boundaries ($s-l=2,-1$) of the region described by $(1.3)$.\\
In the region $l+1\leqslant s< l+2$, where the Schr{\"o}dinger component is more regular, we require $a>0$. In the \enquote{balanced} region $l< s <l+1$, we choose $a=0=b$. In the final regime i.e. $l-1< s\leqslant l$, where the wave component is more regular, we choose $a=0, b>0$. Similarly, for the high modulation wave regularity $\beta$, we have  $\beta \sim s-1$ when $1\leqslant s-l <2$, which is greater than or equal to the \enquote{ideal} regularity $l$. In the balanced regime, it is chosen to be $l$, while in the thin strip $-1<s-l\leqslant -\frac{1}{2}$, where the wave is more regular, we choose $\beta$ to be $\sim s+\frac{1}{2}$ which is less than $l$. For the regularity $s'$ of the high modulation Schr\"odinger norm, we see that the change in the exponent of the modulation weight viz $\theta'$, compensates for the loss. Note that in the low modulation, the $S_{\lambda}$ and $W_{\lambda}$ norms are exactly the $X^{s,\theta}$ norms with $\theta>\frac{1}{2}$.

We also observe that the choice of the parameter $\theta' (>\frac{1}{2})$ enables us to use various properties of the standard $X^{s,b}$ spaces and the bilinear estimates from \cite{zbMATH05555745,zbMATH05964196} (see Section $2.6$) appropriately.
\begin{remark} 
	Equation $(2.2)$ provides us with one possible choice for the parameters. We reckon other choices might be plausible too.
\end{remark}

The evolution of the full Schr{\"o}dinger solution and non-linearity is controlled by the following norms
\begin{equation*}
\Vert u \Vert_{S^{s,l,\theta}} = \Big(\sum_{\lambda\in 2^{\mathbb{N}}}\Vert u_{\lambda}\Vert_{S^{s,l,\theta}_{\lambda}}^2\Big)^{\frac{1}{2}}, ~~~\Vert F \Vert_{N^{s,l,\theta-1}} = \Big(\sum_{\lambda\in 2^{\mathbb{N}}}\Vert F_{\lambda}\Vert_{N^{s,l,\theta-1}_{\lambda}}^2\Big)^{\frac{1}{2}},
\end{equation*}
and that of the wave solution and wave non-linearity is controlled by
\begin{equation*}
\Vert v \Vert_{W^{l,s,\theta}} = \Big(\sum_{\lambda\in 2^{\mathbb{N}}}\Vert v_{\lambda}\Vert_{W^{l,s,\theta}_{\lambda}}^2\Big)^{\frac{1}{2}}, ~~~\Vert G \Vert_{R^{l,s,\theta-1}} = \Big(\sum_{\lambda\in 2^{\mathbb{N}}}\Vert G_{\lambda}\Vert_{R^{l,s,\theta-1}_{\lambda}}^2\Big)^{\frac{1}{2}}.
\end{equation*}

For $0<T\leqslant 1$, we localise the norms in time by defining
\begin{align*}
\Vert u\Vert_{S^{s,l,\theta}(T)} = \inf\{ \Vert \tilde{u}\Vert_{S^{s,l,\theta}}: \tilde{u} \in S^{s,l,\theta}, \tilde{u}\big|_{(0,T)\times \mathbb{R}^d} = u\}
\end{align*}
The norms $N^{s,l,\theta-1}(T), W^{l,s,\theta}(T)$ and $R^{l,s,\theta-1}(T)$ are defined similarly.

	\subsection{Properties of $X^{s,b}$ spaces} Since our function spaces are a variant of the standard $X^{s,b}$ spaces, we note some properties of the same in the following. The proofs can be found in \cite[Lemma 2.1, 2.2]{zbMATH01117825}, \cite[section 2.6]{zbMATH05035890}.\\
%	\begin{enumerate}[label=(\roman*)]
\textbf{(i)} Let $u$ be a solution to the problem $\partial_t u = L(u) +F $ with $L = ih(\nabla/i)$ for some real valued polynomial $h$
%iu_t+\varphi(-i\nabla_x)u=0$
with $u(0) = f$. For a smooth time cutoff $\chi$ such that $\chi (t) = 1, |t|\leqslant 1, \chi (t) = 0, |t|>2$ and $\chi_T(t):=\chi\big(\frac{t}{T}\big)$, $0<T\leqslant 1$ and  $s,b\in \mathbb{R}$, we have
\begin{equation}
\Vert \chi(t) e^{tL}f \Vert_{X^{s,b}_{\tau=h(\xi)}}\lesssim \Vert f\Vert_{H^s}. %((0,T)\times \mathbb{R}^d)}
\end{equation}
\textbf{(ii)} If $v$ is a solution to the problem $\partial_t v =L(v) + F$, with $ L= ih(\nabla/i)~ v(0)=0$, we have for $-\frac{1}{2}<b'\leqslant 0\leqslant b\leqslant b'+1$,
\begin{equation}
\Vert \chi_{T}(t)v\Vert_{X^{s,b}_{\tau=h(\xi)}} \lesssim T^{1+b'-b}\Vert F\Vert_{X^{s,b'}_{\tau=h(\xi)}}.
\end{equation}
\textbf{(iii)} For $\frac{2}{q}+\frac{d}{r}=\frac{d}{2}, 2\leqslant r<\infty$, the following Strichartz estimate holds
\begin{equation}
\Vert u \Vert_{L_t^q L_x^r} \lesssim \Vert u\Vert_{X^{0,\frac{1}{2}+}_{\Delta}},
\end{equation}
where the $\Delta$ in the subscript denotes that the space in consideration is for the Schr{\"o}dinger equation.
For the wave part, we only use
\begin{equation}
\Vert u\Vert_{L_{t}^{\infty}L_x^2} \lesssim \Vert u\Vert_{X^{0,\frac{1}{2}+}_{|\nabla|}}.
\end{equation}
\textbf{(iv)} For $b>\frac{1}{2}$, $X^{s,b}(\mathbb{R}\times \mathbb{R}^d)\hookrightarrow C(\mathbb{R};H^s(\mathbb{R}^d)),~ X^{s,b}((0,T)\times \mathbb{R}^d)\hookrightarrow C((0,T);H^s(\mathbb{R}^d))$.\\
\textbf{(v)} For $2\leqslant p <\infty, b\geqslant \frac{1}{2}-\frac{1}{p}$, we have 
\begin{equation}
\Vert u \Vert_{L_t^p L_x^2}\lesssim \Vert u\Vert_{X^{0,b}}.
\end{equation}
\textbf{(vi)} For any $s \in \mathbb{R}$ and $-\frac{1}{2}<b'<b<\frac{1}{2}$, we have 
\begin{equation}
\Vert u\Vert_{X^{s,b'}(T)} \lesssim T^{b-b'}\Vert u\Vert_{X^{s,b}(T)}.
\end{equation}
\textbf{(vii)} Duality and complex conjugation: 
\begin{equation*}
(X^{s,b}_{\tau=h(\xi)})' = X^{-s,-b}_{\tau=-h(-\xi)}, \quad \Vert \overline{u}\Vert_{X^{s,b}_{\tau=-h(-\xi)}} = \Vert u\Vert_{X^{s,b}_{\tau=h(\xi)}}.
\end{equation*}

	\subsection{Duhamel formulae and energy inequalities}
% \[\cite{10}, Lemma 2.8\]
Let $\mathcal{I}_{S}$ and $\mathcal{I}_{W}$ be the solution operators for the inhomogeneous Schr{\"o}dinger equation and the half wave equation, respectively, i.e.
\begin{equation*}
\mathcal{I}_{S}[F](t) = -i\int_0^t e^{i(t-s)\Delta}F(s)ds, \quad \mathcal{I}_{W}[G](t) = -i\int_0^t e^{i(t-s)|\nabla|}G(s)ds.
\end{equation*}
Then we have the following estimates.
\begin{lemma} (Energy inequality for the Schr{\"o}dinger equation)
	Let $a,b,s',\theta' \in \mathbb{R}$ be as defined in $(2.2)$. For any $\lambda \in 2^{\mathbb{N}}$ and a smooth time cutoff $\chi$, we have:
	\begin{align*}
	&\Vert \chi (t) e^{it\Delta}u_0\Vert_{S^{s,l,\theta}_{\lambda}} \lesssim \lambda^{s}\Vert u_0\Vert_{L_x^2},\\
	&\Vert \chi(t) \mathcal{I}_{S}[F_{\lambda}]\Vert_{S^{s,l,\theta}_{\lambda}} \lesssim \Vert F_{\lambda}\Vert_{N^{s,l,\theta-1}_{\lambda}}.
	\end{align*}
\end{lemma}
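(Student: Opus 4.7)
The plan is to bound each of the three terms in the $S_\lambda^{s,l,\theta}$ norm separately, using the standard $X^{s,b}$ toolkit recalled in (2.3)--(2.9) together with elementary Fourier-support considerations adapted to the three modulation regimes.

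For the homogeneous estimate $\|\chi(t)e^{it\Delta}u_0\|_{S_\lambda^{s,l,\theta}}\lesssim\lambda^s\|u_0\|_{L^2}$, the $L_t^\infty L_x^2$ piece is immediate from the unitarity of $e^{it\Delta}$ and $\|\chi\|_{L^\infty}\lesssim 1$. The low-modulation piece reduces to (2.3) applied with $b=\theta$, since $C_{\ll\lambda^2}$ is a Fourier projection. For the high-modulation piece I rely on the elementary observation that on the support of $C_{\gtrsim\lambda^2}P_\lambda$ one has $|\xi|\sim\lambda$ and $|\tau+|\xi|^2|\gtrsim\lambda^2$, so the triangle inequality gives $\lambda+|\tau|\lesssim\langle\tau+|\xi|^2\rangle$; this allows me to absorb the multiplier $(\lambda+|\partial_t|)^a$ into the modulation weight and invoke (2.3) with $b=\theta'+a$. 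Reconciling the prefactor $\lambda^{s'-2a+b}$ with $\lambda^s$ amounts to checking $s'-2a+b\leq s$ case-by-case using (2.2): in the balanced regime $0\leq s-l<1$ this is automatic, in $s-l\geq 1$ the condition reduces to $s-l\geq 3-4\theta$, and in $s-l<0$ to $s-l\geq 4\theta-3$. Both are implied by (1.3) for $\theta=\tfrac12+$ sufficiently close to $\tfrac12$.

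For the inhomogeneous estimate, I decompose $F_\lambda = C_{\ll\lambda^2}F_\lambda + C_{\gtrsim\lambda^2}F_\lambda$. The two $X^{s,b}$-type pieces of $S_\lambda^{s,l,\theta}$ are bounded by the corresponding $X^{s,b}$-type pieces of $N_\lambda^{s,l,\theta-1}$ via the inhomogeneous energy estimate (2.4), applied with $(b',b)=(\theta-1,\theta)$ in the low-modulation sector and with $(b',b)=(\theta'-1,\theta')$ in the high-modulation sector; each application gains one full power of the modulation weight and so converts the exponents $\theta-1$ (resp.\ $\theta'-1$) in $N$ into $\theta$ (resp.\ $\theta'$) in $S$. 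The absorption of $(\lambda+|\partial_t|)^a$ in the high-modulation regime proceeds exactly as in the homogeneous case.

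The delicate ingredient is the $L_t^\infty L_x^2$ piece of $S_\lambda$. The contribution of $C_{\ll\lambda^2}F_\lambda$ and of the high-temporal-frequency part of $C_{\gtrsim\lambda^2}F_\lambda$ is handled through the embedding $X^{0,\theta}\hookrightarrow C_tL^2_x$ of property (iv) in Section 2.4 combined with (2.4). For $H:=P^{(t)}_{\ll\lambda^2}F_\lambda$, whose modulation is confined to the narrow region $|\tau+|\xi|^2|\sim\lambda^2$, I use the explicit Duhamel representation
\[
\widetilde{\mathcal{I}_S[H]}(t,\xi)=-\int\frac{e^{it\tau}-e^{-it|\xi|^2}}{\tau+|\xi|^2}\hat H(\tau,\xi)\,d\tau,
\]
whose kernel is uniformly of size $\lambda^{-2}$ on this support. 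A short calculation in Fourier space then yields $\lambda^s\|\chi\mathcal{I}_S[H]\|_{L_t^\infty L_x^2}\lesssim\lambda^{s+2\theta-3}\|H\|_{L_t^\infty L_x^2}$, matching the first term of $N_\lambda^{s,l,\theta-1}$ whenever $\theta\geq\tfrac12$. The main obstacle is exactly this last bound: the unusual prefactor $\lambda^{s+2\theta-3}$ in the first term of $N_\lambda^{s,l,\theta-1}$ is tuned precisely to the two-power gain from Duhamel inversion of $i\partial_t+\Delta$ on the low-temporal-frequency, high-modulation sector, and careful tracking of the modulation and temporal-frequency projections is needed to avoid double-counting. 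The remaining work is the routine algebraic verification of the parameter relations from (2.2) in each of the four subregions of $(s,l)$ described after (2.2).
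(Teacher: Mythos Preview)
Your treatment of the homogeneous estimate is essentially the paper's argument and is fine. The gap is in the inhomogeneous estimate: you only address the \emph{diagonal} contributions (low-modulation output paired with the low-modulation piece of $F_\lambda$, and high-modulation output paired with the high-modulation piece), and you implicitly assume that the projections $C_{\ll\lambda^2}$, $C_{\gtrsim\lambda^2}$ commute with $\chi(t)\mathcal I_S[\cdot]$. They do not: multiplication by the cutoff $\chi$ mixes modulations, so the cross terms
\[
\lambda^{s}\bigl\Vert \langle \tau+|\xi|^2\rangle^{\theta}\mathcal{F}\bigl(C_{\ll \lambda^2} \chi(t)\,\mathcal I_S[C_{\gtrsim \lambda^2}F_{\lambda}]\bigr)\bigr\Vert_{L_{\tau,\xi}^2}
\quad\text{and}\quad
\lambda^{s'-2a+b}\bigl\Vert \langle \tau+|\xi|^2\rangle^{\theta'}\mathcal{F}\bigl((\lambda+|\partial_t|)^a C_{\gtrsim \lambda^2}\chi(t)\,\mathcal I_S[C_{\ll \lambda^2}F_{\lambda}]\bigr)\bigr\Vert_{L_{\tau,\xi}^2}
\]
are nonzero and must be estimated separately. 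Your plan does not cover them.

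These cross terms are in fact the heart of the proof. Consider the first one in the regime $a>0$ (so $s'=s$, $\theta'=\theta$, $b=0$). A naive application of (2.4) gives the bound $\lambda^{s}\Vert \langle \tau+|\xi|^2\rangle^{\theta-1}\mathcal{F}(C_{\gtrsim \lambda^2}F_{\lambda})\Vert_{L^2}$, whereas the high-modulation $N$ norm only controls $\lambda^{s-2a}\Vert \langle \tau+|\xi|^2\rangle^{\theta-1}(\lambda+|\tau|)^{a}\mathcal{F}(C_{\gtrsim \lambda^2}F_{\lambda})\Vert_{L^2}$. On the portion of Fourier space where $|\tau|\ll\lambda^2$ (which is compatible with $|\tau+|\xi|^2|\sim\lambda^2$ since $|\xi|^2\sim\lambda^2$), the temporal weight satisfies $(\lambda+|\tau|)^a\ll\lambda^{2a}$ and the bound fails. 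The paper closes this gap precisely by invoking the $L_t^\infty L_x^2$ component $\lambda^{s+2\theta-3}\Vert P_{\ll\lambda^2}^{(t)}F_\lambda\Vert_{L_t^\infty L_x^2}$ of the $N$ norm (via (2.9), Sobolev embedding in $t$, and a temporal-frequency decomposition of $\chi$); that component exists in the definition of $N$ exactly for this purpose, not merely for the $L_t^\infty L_x^2$ output piece you discuss. The second cross term likewise requires separate case-by-case arguments (interpolation at $a=1$ when $a>0$; a commutator bound $[\chi,P^{(t)}_{\gtrsim\lambda^2}]=O(\lambda^{-2})$ when $b>0$), none of which follows from a direct appeal to (2.4).
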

\begin{proof}
	We first note the following property of the linear group wrt temporal frequency and modulation localisation, 	which we shall use repeatedly
	\begin{equation}
	C_{\star}f = e^{tL}P_{\star}^{(t)}(e^{-tL}f).
	\end{equation}
	The norm for the free solution is given by
\begin{align*}
\Vert \chi(t) e^{it\Delta}u_{0}\Vert_{S^{s,l,\theta}_{\lambda}}
% &\leqslant \Vert e^{it\Delta}u_{0}\Vert_{S^{s,l,\theta}_{\lambda}}\\
& = \lambda^s \Vert \chi(t) e^{it\Delta}u_0\Vert_{L_t^{\infty} L_x^2} + \lambda^s \Vert \langle \tau+|\xi|^2 \rangle^{\theta}\mathcal{F}(C_{\ll \lambda^2}\chi(t) e^{it\Delta}u_0)\Vert_{L_{\tau,\xi}^2} \\
&+ \lambda^{s'-2a+b}\Vert \langle \tau+|\xi|^2 \rangle^{\theta'} \mathcal{F}((\lambda+|\partial_t|)^a C_{\gtrsim \lambda^2}\chi(t) e^{it\Delta}u_{0})\Vert_{L_{\tau,\xi}^2}.
\end{align*}
Using $(2.5)$ and $(2.3)$, we have
\begin{align*}
\lambda^s \Vert \chi(t)e^{it\Delta}u_0\Vert_{L_{t}^{\infty}L_x^2} \lesssim \lambda^s \Vert \langle \tau+|\xi|^2 \rangle^{\theta}\mathcal{F}(\chi(t)e^{it\Delta}u_0)\Vert_{L_{\tau,\xi}^2} \lesssim \lambda^s \Vert u_0\Vert_{L_x^2},
\end{align*}
and for the second term using $(2.9)$, we have
\begin{align*}
\lambda^s \Vert \langle \tau+|\xi|^2 \rangle^{\theta}\mathcal{F}(C_{\ll \lambda^2}\chi(t)e^{it\Delta}u_0)\Vert_{L_{\tau,\xi}^2} &= \lambda^s \Vert \langle \tau+|\xi|^2 \rangle^{\theta}\mathcal{F}(e^{it\Delta}P_{\ll \lambda^2}^{(t)}(\chi(t)u_0))\Vert_{L_{\tau,\xi}^2}\\
&=\lambda^s \Vert P_{\ll \lambda^2}^{(t)}(\chi(t)u_0)\Vert_{H_t^{\theta}L_x^2} \lesssim \lambda^s \Vert u_0\Vert_{L_x^2}.
%	&\lesssim \lambda^s \Vert \chi(t)\Vert_{H_t^{\theta}}\Vert u_0\Vert_{L_x^2}\\
\end{align*}
A similar computation gives
\begin{align*}
\lambda^{s'-2a+b}\Vert \langle \tau+|\xi|^2 \rangle^{\theta'} \mathcal{F}((\lambda+|\partial_t|)^a C_{\gtrsim \lambda^2}\chi(t) e^{it\Delta}u_{0})\Vert_{L_{\tau,\xi}^2}
%&= \lambda^{s'-2a+b}\Vert \langle \tau+|\xi|^2 \rangle^{\theta'} (\lambda+|\tau|)^a \mathcal{F}(e^{it\Delta}(P_{\gtrsim \lambda^2}^{(t)}\chi (t) u_0))\Vert_{L_{\tau,\xi}^2}
=\lambda^{s'-2a+b} \Vert P_{\gtrsim \lambda^2}^{(t)}(\chi(t)u_0)\Vert_{H_t^{\theta'+a}L_x^2}
%	&\lesssim \lambda^{s-2a+b+2\theta-2}\Vert P_{\gtrsim \lambda^2}^{(t)}(\chi (t)u_0)\Vert_{H_t^{1+a}L_x^2}\\
\lesssim \lambda^s \Vert u_0\Vert_{L_x^2}.
\end{align*}
We now consider the $S_{\lambda}$ norm of the Duhamel integral. The $L_t^{\infty}L_x^2$ term is decomposed as follows
\begin{align*}
 \lambda^s \Big \Vert \chi(t) \int_0^t e^{i(t-s)\Delta}C_{\ll \lambda^2}F_{\lambda}(s)ds\Big \Vert_{L_{t}^{\infty}L_x^2} + \lambda^s \Big \Vert \chi(t) \int_0^t e^{i(t-s)\Delta}C_{\gtrsim \lambda^2}F_{\lambda}(s)ds\Big \Vert_{L_{t}^{\infty}L_x^2}
\end{align*}
Using properties $(2.5)$ and $(2.4)$ respectively, the first term can be bounded by $\lambda^{s} \Vert \langle \tau+|\xi|^2 \rangle^{\theta-1}\mathcal{F}(C_{\ll \lambda^2}F_{\lambda})\Vert_{L_{\tau,\xi}^2}$.\\
The second term can be further written as
\begin{align*}
\lesssim \lambda^s \Big (\Big \Vert \chi(t) \int_0^t e^{i(t-s)\Delta}C_{\sim \lambda^2}P_{\ll \lambda^2}^{(t)}F_{\lambda}(s)ds\Big \Vert_{L_{t}^{\infty}L_x^2} + \Big \Vert \chi(t)\int_0^t e^{i(t-s)\Delta}C_{\gtrsim \lambda^2}P_{\gtrsim \lambda^2}^{(t)}F_{\lambda}(s)ds\Big \Vert_{L_{t}^{\infty}L_x^2}\Big ).
\end{align*}
Using $\Vert \mathcal{I}_S[C_{\gtrsim \mu}G]\Vert_{L_{t}^{\infty}L_x^2} \lesssim \mu^{-1}\Vert C_{\gtrsim \mu}G\Vert_{L_{t}^{\infty}L_x^2}$, the first term can be bounded by $\lambda^{s-2} \Vert P_{\ll \lambda^2}^{(t)}F_{\lambda}\Vert_{L_{t}^{\infty}L_x^2}$ while for the second term properties $(2.5)$ and $(2.4)$ can be used to obtain the  bound $\lambda^{s'-2a+b}\Vert \langle \tau+|\xi|^2\rangle^{\theta'-1}\mathcal{F}((\lambda+|\partial_t|)^a C_{\gtrsim \lambda^2}F_{\lambda})\Vert_{L_{\tau,\xi}^2}$ by noting that the temporal weight $m_{S}(\tau)\gtrsim 1$ and $b\geqslant 0$.\\
The low modulation norm of the Duhamel integral is also decomposed into
\begin{align*}
&\lambda^s  \Big \Vert \langle \tau+|\xi|^2 \rangle^{\theta}\mathcal{F}(C_{\ll \lambda^2} \chi(t) \int_0^t e^{i(t-s)\Delta}C_{\ll \lambda^2}F_{\lambda}(s)ds)\Big \Vert_{L_{\tau,\xi}^2} \\
&\quad +\lambda^s  \Big \Vert \langle \tau+|\xi|^2 \rangle^{\theta}\mathcal{F}(C_{\ll \lambda^2} \chi(t) \int_0^t e^{i(t-s)\Delta}C_{\gtrsim \lambda^2}F_{\lambda}(s)ds)\Big \Vert_{L_{\tau,\xi}^2}=:\text{(I)+(II)}
\end{align*}
An application of $(2.4)$ provides the correct bound for (I). To handle (II), we consider the following cases pertaining to the high modulation norm for the Schr\"odinger non-linearity:\\
{\boldmath $ (i)~a=0=b: $} A straightforward application of $(2.4)$ gives
\begin{align*}
\text{(II)} \lesssim \lambda^s \Vert \langle \tau+|\xi|^2\rangle^{\theta-1}\mathcal{F}( C_{\gtrsim \lambda^2}F_{\lambda})\Vert_{L_{\tau,\xi}^2}.
\end{align*}
{\boldmath $ (ii)~a>0, b=0: $}
 We note that we can bound (II) by $\lambda^{s'-2a} \Vert \langle \tau+|\xi|^2\rangle^{\theta'-1}\mathcal{F}((\lambda+|\partial_t|)^a C_{\gtrsim \lambda^2}F_{\lambda})\Vert_{L_{\tau,\xi}^2}$ using $(2.4)$, provided the temporal frequencies of $F_{\lambda}$ are $\gtrsim \lambda^2$. We consider the case when the temporal frequencies of $F_{\lambda}$ are $\ll \lambda^2$. Note that this implies that $F_{\lambda}$ has a modulation of size $\sim \lambda^2$. We use $(2.9)$ and Sobolev embedding to obtain
\begin{align*}
\text{(II)}&=\lambda^{s}\Big \Vert  P_{\ll \lambda^2}^{(t)}(\chi(t) \int_0^t P_{\sim \lambda^2}^{(s)}e^{-is\Delta}F_{\lambda}(s)ds)\Big \Vert_{H_t^{\theta}L_x^2}\\
&\lesssim \lambda^{s}\Big \Vert P_{\ll \lambda^2}^{(t)}(\chi(t) \int_0^t P_{\sim \lambda^2}^{(s)}e^{-is\Delta}F_{\lambda}(s)ds)\Big \Vert_{W_t^{1,p}L_x^2}, ~~~p=\frac{2}{3-2\theta}\\
&\lesssim \lambda^{s} \Big (\Big \Vert P_{\ll \lambda^2}^{(t)}(\chi(t) \int_0^t P_{\sim \lambda^2}^{(s)}e^{-is\Delta}F_{\lambda}(s)ds)\Big \Vert_{L_t^pL_x^2}
+ \Big \Vert  P_{\ll \lambda^2}^{(t)}(\chi(t) \int_0^t P_{\sim \lambda^2}^{(s)}e^{-is\Delta}F_{\lambda}(s)ds)'\Big\Vert_{L_t^{p}L_x^2} \Big)\\
&=:\text{(II.1)+(II.2)},
\end{align*}
where the prime denotes derivative wrt time.\\
Using H\"older's inequality, unitarity of $e^{it\Delta}$ and $\Vert \mathcal{I}_S[C_{\gtrsim \mu}G]\Vert_{L_{t}^{\infty}L_x^2} \lesssim \mu^{-1}\Vert C_{\gtrsim \mu}G\Vert_{L_t^{\infty}L_x^2}$, we have
\begin{align*}
\text{(II.1)}\lesssim \lambda^{s} \Vert \chi(t)\Vert_{L_t^p L_x^{\infty}}\Big \Vert  \int_0^t P_{\sim \lambda^2}^{(s)}e^{-is\Delta}F_{\lambda}(s)ds\Big \Vert_{L_t^{\infty}L_x^2}
\lesssim \lambda^{s-2}\Vert  P_{\ll \lambda^2}^{(t)}F_{\lambda} \Vert_{L_t^{\infty}L_x^2}.
\end{align*}
(II.2) is further written as
\begin{align*}
\text{(II.2)}\lesssim \lambda^{s}\Big( \Big \Vert P_{\ll \lambda^2}^{(t)}(\chi'(t)  \int_0^t P_{\sim \lambda^2}^{(s)}e^{-is\Delta}F_{\lambda}(s)ds )\Big \Vert_{L_t^pL_x^2} + \Vert  P_{\ll \lambda^2}^{(t)}(\chi(t) P_{\sim \lambda^2}^{(t)}e^{-it\Delta}F_{\lambda})\Vert_{L_t^pL_x^2}\Big).
\end{align*}
The first term above can be handled like (II.1). For the second term, we use Bernstein's inequality and decompose the time cutoff $\chi(t)$ to obtain
\begin{align*}
&\lesssim \lambda^{s+2\theta-1}\Vert  P_{\ll \lambda^2}^{(t)}(\chi(t) P_{\sim \lambda^2}^{(t)}e^{-it\Delta}F_{\lambda})\Vert_{L_t^1L_x^2}\\
&\lesssim \lambda^{s+2\theta-1}(\Vert  P_{\ll \lambda^2}^{(t)}(P_{\ll \lambda^2}^{(t)}\chi(t) P_{\sim \lambda^2}^{(t)}e^{-it\Delta}F_{\lambda})\Vert_{L_t^1L_x^2} +\Vert  P_{\ll \lambda^2}^{(t)}(P_{\gtrsim \lambda^2}^{(t)}\chi(t) P_{\sim \lambda^2}^{(t)}e^{-it\Delta}F_{\lambda})\Vert_{L_t^1L_x^2}).
\end{align*}
The first term above does not contribute while for the second, we have
\begin{align*}
&\lesssim \lambda^{s+2\theta-1}\Vert P_{\gtrsim \lambda^2}^{(t)}\chi(t)\Vert_{L_t^1L_x^{\infty}} \Vert P_{\sim \lambda^2}^{(t)}e^{-it\Delta}F_{\lambda}\Vert_{L_t^{\infty}L_x^2}
\lesssim \lambda^{s+2\theta-3}\Vert C_{\sim \lambda^2} F_{\lambda}\Vert_{L_t^{\infty}L_x^2}\lesssim \Vert F_{\lambda}\Vert_{N^{s,l,\theta-1}_{\lambda}},
\end{align*} 
where in the last inequality we use that the $L^1$ norm of a time cut-off at high temporal frequencies $(\gtrsim \lambda^2)$ is $\lesssim \lambda^{-2}$, see \cite[Lemma 2.4]{zbMATH06488454} for a proof.\\
{\boldmath $ (iii)~b>0, a=0: $} As for case $(i)$, we have
\begin{align*}
\text{(II)}\lesssim \lambda^s \Vert \langle \tau+|\xi|^2\rangle^{\theta-1}\mathcal{F}( C_{\gtrsim \lambda^2}F_{\lambda})\Vert_{L_{\tau,\xi}^2} \lesssim \lambda^{s+2\theta-2}\Vert \mathcal{F}(C_{\gtrsim \lambda^2}F_{\lambda})\Vert_{L_{\tau,\xi}^2}\lesssim \Vert F_{\lambda}\Vert_{N^{s,l,\theta-1}_{\lambda}},
\end{align*}
where the last inequality follows by noting that $b>0$ and $s'=s+2\theta-2$ for the given case.\\
For the high modulation norm of the Duhamel integral, we again use the decomposition
\begin{align*}
&\lambda^{s'-2a+b}  \Big \Vert \langle \tau+|\xi|^2\rangle^{\theta'} \mathcal{F}((\lambda+|\partial_t|)^a C_{\gtrsim \lambda^2}\chi(t)\int_0^t e^{i(t-s)\Delta}C_{\gtrsim \lambda^2}F_{\lambda}(s)ds) \Big \Vert_{L_{\tau,\xi}^2}\\
&\quad  + \lambda^{s'-2a+b} \Big \Vert \langle \tau+|\xi|^2\rangle^{\theta'} \mathcal{F}((\lambda+|\partial_t|)^a C_{\gtrsim \lambda^2}\chi(t)\int_0^t e^{i(t-s)\Delta}C_{\ll \lambda^2}F_{\lambda}(s)ds) \Big \Vert_{L_{\tau,\xi}^2}=:\text{(III)+(IV)}
\end{align*}
For (III), $(2.4)$ suffices. For (IV), we again consider three cases:\\
{\boldmath $ (i)~a=0=b: $} From the definition of the norm and $(2.4)$,
\begin{align*}
\text{(IV)} \lesssim \lambda^{s}\Vert \langle \tau+|\xi|^2\rangle^{\theta-1}\mathcal{F}(C_{\gtrsim \lambda^2}F_{\lambda})\Vert_{L_{\tau,\xi}^2} \lesssim \Vert F_{\lambda}\Vert_{N^{s,l,\theta-1}_{\lambda}}.
\end{align*}
{\boldmath $ (ii)~ a>0, b=0: $} We prove the estimate for $a=1$. Interpolation with the case $a=0$ then leads us to the desired result. From the defintion of the $S_{\lambda}$ norm, we have for $a=1$, using $(2.9)$ and Sobolev embedding
\begin{align*}
\text{(IV)}&= \lambda^{s-2}\Big \Vert \langle \tau+|\xi|^2\rangle^{\theta} \mathcal{F}((\lambda+|\partial_t|) C_{\gtrsim \lambda^2}\chi(t)\int_0^t e^{i(t-s)\Delta}C_{\ll \lambda^2}F_{\lambda}(s)ds) \Big \Vert_{L_{\tau,\xi}^2}\\
& = \lambda^{s-2} \Big \Vert (\lambda+|\partial_t|) (P_{\gtrsim \lambda^2}^{(t)}\chi(t)\int_0^t P_{\ll \lambda^2}^{(s)}e^{-is\Delta}F_{\lambda}(s)ds)\Big\Vert_{H_t^{\theta}L_x^2}\\
& \lesssim \lambda^{s-2} \Big \Vert (\lambda+|\partial_t|) (P_{\gtrsim \lambda^2}^{(t)}\chi(t)\int_0^t P_{\ll \lambda^2}^{(s)}e^{-is\Delta}F_{\lambda}(s)ds)\Big\Vert_{W^{1,p}_t L_x^2}, ~~p=\frac{2}{3-2\theta}\\
&\lesssim \lambda^{s-2}\Big \Vert (\lambda+|\partial_t|) (P_{\gtrsim \lambda^2}^{(t)}\chi(t)\int_0^t P_{\ll \lambda^2}^{(s)}e^{-is\Delta}F_{\lambda}(s)ds)\Big\Vert_{L_t^p L_x^2}\\
&\quad  + \lambda^{s-2}\Big \Vert (\lambda+|\partial_t|) (P_{\gtrsim \lambda^2}^{(t)}\chi(t)\int_0^t P_{\ll \lambda^2}^{(s)}e^{-is\Delta}F_{\lambda}(s)ds)'\Big\Vert_{L_t^p L_x^2}=:\text{(IV.1)+(IV.2)},
\end{align*}
where the prime denotes derivative wrt $t$.\\
Using the product estimate for $a=1,\frac{1}{p}=\frac{1}{2}+\frac{1}{q}$, we have
\begin{align*}
\text{(IV.1)}&\lesssim \lambda^{s-3}\Vert (\lambda+|\partial_t|)\chi(t)\Vert_{L_t^qL_x^{\infty}}\Big \Vert (\lambda+|\partial_t|)\int_0^t P_{\ll \lambda^2}^{(s)}e^{-is\Delta}F_{\lambda}(s)ds\Big \Vert_{L_{t,x}^2} \lesssim \lambda^s \Vert \langle \tau+|\xi|^2 \rangle^{\theta-1}\mathcal{F}(C_{\ll \lambda^2}F_{\lambda})\Vert_{L_{\tau,\xi}^2}.
\end{align*}
On applying the product rule to (IV.2), we find that the first term is similar to (IV.1). Using the product estimate for $\frac{1}{p}=\frac{1}{q}+\frac{1}{2}$, the second term can be bounded by
\begin{align*}
&\lambda^{s-2}\Vert (\lambda+|\partial_t|)P_{\gtrsim \lambda^2}^{(t)}(\chi(t) P_{\ll \lambda^2}^{(t)}e^{-it\Delta}F_{\lambda})\Vert_{L_t^pL_x^2}\\
& \lesssim \lambda^{s-3}\Vert (\lambda+|\partial_t|)\chi(t)\Vert_{L_t^qL_x^{\infty}} \Vert (\lambda+|\partial_t|) P_{\ll \lambda^2}^{(t)}e^{-it\Delta}F_{\lambda}\Vert_{L_{t,x}^2} \lesssim \lambda^{s-1}\Vert C_{\ll \lambda^2}F_{\lambda}\Vert_{L_{t,x}^2} \lesssim \Vert F_{\lambda}\Vert_{N^{s,l,\theta-1}_{\lambda}}.
\end{align*}
{\boldmath $ (iii)~ b>0, a=0: $}
From the defintion of the $S_{\lambda}$ norm in the case $b>0$ and using $(2.9)$, with the prime denoting derivative wrt time, we have
\begin{align*}
\text{(IV)}&\lesssim \lambda^{s+2\theta-2+b}\Big( \Big \Vert P_{\gtrsim \lambda^2}^{(t)}(\chi(t) \int_0^t P_{\ll \lambda^2}^{(s)}e^{-is\Delta}F_{\lambda}(s)ds) \Big \Vert_{L_{t,x}^2} + \Big \Vert (P_{\gtrsim \lambda^2}^{(t)}(\chi(t)\int_0^t P_{\ll \lambda^2}^{(s)}e^{-is\Delta}F_{\lambda}(s)ds))'\Big \Vert_{L_{t,x}^2}\Big)\\
& \quad=: \text{(IV.3)+(IV.4)}.
\end{align*}
Unitarity of $e^{it\Delta}$ and an application of $(2.4)$ gives
\begin{align*}
\text{(IV.3)}	\lesssim \lambda^{s+2\theta-2+b} \Big \Vert e^{-it\Delta} \chi (t) \int_0^t e^{i(t-s)\Delta}C_{\ll \lambda^2}F_{\lambda}(s)ds \Big\Vert_{L_{t,x}^2}
%	&= \lambda^{s+b+2\theta-2}\Vert \chi(t) \mathcal{I}_S[C_{\ll \lambda^2}F_{\lambda}]\Vert_{X^{0,0}}\\
&\lesssim \lambda^{s+2\theta-2+b} \Vert \langle \tau+|\xi|^2\rangle^{-1}\mathcal{F}(C_{\ll \lambda^2}F_{\lambda})\Vert_{L_{\tau,\xi}^2}\\
&\lesssim \lambda^{s+2\theta-2+b} \Vert \langle \tau+|\xi|^2\rangle^{\theta-1}\mathcal{F}(C_{\ll \lambda^2}F_{\lambda})\Vert_{L_{\tau,\xi}^2}\\
&\lesssim \lambda^{s} \Vert \langle \tau+|\xi|^2\rangle^{\theta-1}\mathcal{F}(C_{\ll \lambda^2}F_{\lambda})\Vert_{L_{\tau,\xi}^2}.
\end{align*}
Using the product rule, we have
\begin{align*}
\text{(IV.4)}&\lesssim \lambda^{s+b+2\theta-2}\Big(\Big \Vert P_{\gtrsim \lambda^2}^{(t)}(\chi'(t)\int_0^t P_{\ll \lambda^2}^{(s)}e^{-is\Delta}F_{\lambda}(s)ds)\Big \Vert_{L_{t,x}^2} +  \Vert P_{\gtrsim \lambda^2}^{(t)}(\chi(t) P_{\ll \lambda^2}^{(t)}e^{-it\Delta}F_{\lambda})\Vert_{L_{t,x}^2}\Big)\\
&\quad =:\text{(IV.41)+(IV.42)}
\end{align*}
(IV.41) can be handled exactly in the same way as (IV.3) while for (IV.42), we decompose the time cutoff
\begin{align*}
\text{(IV.42)}&\lesssim \lambda^{s+b+2\theta-2} \big(\Vert P_{\gtrsim \lambda^2}^{(t)}(P_{\ll \lambda^2}^{(t)}\chi(t) P_{\ll \lambda^2}^{(t)}e^{-it\Delta}F_{\lambda})\Vert_{L_{t,x}^2} + \Vert P_{\gtrsim \lambda^2}^{(t)}(P_{\gtrsim \lambda^2}^{(t)}\chi(t) P_{\ll \lambda^2}^{(t)}e^{-it\Delta}F_{\lambda})\Vert_{L_{t,x}^2 }\big).
\end{align*}
The first term does not contribute while we bound the second term by
\begin{align*}
&\lesssim \lambda^{s+b+2\theta-2} \Vert (P_{\gtrsim \lambda^2}^{(t)}\chi(t) - \chi(t) P_{\gtrsim \lambda^2}^{(t)})P_{\ll \lambda^2}^{(t)}e^{-it\Delta}F_{\lambda}\Vert_{L_{t,x}^2} + \Vert \chi(t) P_{\gtrsim \lambda^2}^{(t)}P_{\ll \lambda^2}^{(t)}e^{-it\Delta}F_{\lambda}\Vert_{L_{t,x}^2}\\
&\lesssim \lambda^{s+b+2\theta-4}\Vert P_{\ll \lambda^2}^{(t)}e^{-it\Delta}F_{\lambda}\Vert_{L_{t,x}^2}
\lesssim \lambda^{s+2\theta-3}\Vert e^{-it\Delta}C_{\ll \lambda^2}F_{\lambda}\Vert_{L_{t,x}^2}
\lesssim \lambda^s \Vert \langle \tau+|\xi|^2\rangle^{\theta-1}\mathcal{F}(C_{\ll \lambda^2}F_{\lambda})\Vert_{L_{\tau,\xi}^2},
\end{align*}
noting that the second term in the first display above vanishes.
\end{proof}

	\begin{lemma}(Energy inequality for the wave equation) Let $l,\beta,\theta,a \in \mathbb{R}$ be as defined in $(2.2)$. For any $\lambda \in 2^{\mathbb{N}}$ and a smooth time cutoff $\chi$, we have:
	\begin{align*}
	&\Vert \chi(t) e^{it|\nabla|}v_0\Vert_{W^{l,s,\theta}_{\lambda}} \lesssim \lambda^l\Vert v_0\Vert_{L_x^2},\\
	&\Vert \chi(t)\mathcal{I}_{W}[G_{\lambda}]\Vert_{W^{l,s,\theta}_{\lambda}} \lesssim \Vert G_{\lambda}\Vert_{R^{l,s,\theta-1}_{\lambda}}.
	\end{align*}
\end{lemma}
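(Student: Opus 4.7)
The plan is to mirror the proof of Lemma 2.4, replacing the Schr\"odinger propagator $e^{it\Delta}$ and the paraboloid-centered modulation projectors $C_\mu$ by the half-wave propagator $e^{it|\nabla|}$ and the cone-centered projectors $Q_\mu$. The essential tool is the wave analogue of the identity $(2.9)$:
\begin{equation*}
Q_\star f = e^{it|\nabla|}\, P^{(t)}_\star\bigl(e^{-it|\nabla|} f\bigr),
\end{equation*}
which converts modulation localisation into temporal frequency localisation after conjugation by the linear flow. Combined with the generic estimates $(2.3)$ and $(2.4)$ applied on the wave side $\tau = \pm|\xi|$, this is all that is needed to transplant the Schr\"odinger argument.

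For the homogeneous estimate, the three terms defining $\Vert \chi(t) e^{it|\nabla|} v_0 \Vert_{W^{l,s,\theta}_\lambda}$ are handled in turn. The $L^\infty_t L^2_x$ piece follows from $(2.6)$ combined with $(2.3)$. The low modulation piece, after the identity above, reduces to bounding $\Vert P^{(t)}_{\ll \lambda^2}(\chi v_0) \Vert_{H^\theta_t L^2_x}$, which is controlled by $\Vert v_0 \Vert_{L^2_x}$ because $\chi$ is Schwartz in $t$; the derivative weight $(\lambda+|\partial_t|)^a$ costs only $\lambda^a$, compensated by the prefactor $\lambda^{l-a}$. The high modulation piece reduces analogously to $\Vert P^{(t)}_{\gtrsim \lambda^2}(\chi v_0) \Vert_{H^1_t L^2_x}$ (since the modulation weight exponent here is $1$), and the sharp decay $\Vert P^{(t)}_{\gtrsim \lambda^2} \chi \Vert_{L^1_t} \lesssim \lambda^{-2}$ recalled in Lemma 2.4 makes the prefactor $\lambda^{\beta-1}$ manageable in each of the three regimes of $\beta$ defined in $(2.2)$.

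For the Duhamel estimate, I decompose both the outer projection ($Q_{\ll \lambda^2}$ versus $Q_{\gtrsim \lambda^2}$) and the input $G_\lambda = Q_{\ll \lambda^2}G_\lambda + Q_{\gtrsim \lambda^2}G_\lambda$. The $L^\infty_t L^2_x$ piece is controlled by the elementary energy bound $\Vert \mathcal{I}_W[Q_{\gtrsim \mu} G] \Vert_{L^\infty_t L^2_x} \lesssim \mu^{-1} \Vert Q_{\gtrsim \mu} G \Vert_{L^\infty_t L^2_x}$ on the high modulation side and by $(2.3)$ on the low modulation side, exactly as in the Schr\"odinger proof. The low/low pairing and the pairing (outer $Q_{\ll \lambda^2}$, input $Q_{\gtrsim \lambda^2}$) fall directly within the range of $(2.4)$ once one reads off the correct exponents from the norms $W^{l,s,\theta}_\lambda$ and $R^{l,s,\theta-1}_\lambda$.

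The genuine obstacle is the pairing where the outer projection is $Q_{\gtrsim \lambda^2}$ and the input is $Q_{\ll \lambda^2}G_\lambda$: the high modulation weight $\langle \tau-|\xi|\rangle^{1}$ forces $b = 1$ in $(2.4)$, which lies at the forbidden endpoint. This is the direct analogue of sub-cases $(ii)$ and $(iii)$ of Lemma 2.4 with $\theta'=1$, and the same remedy applies. After moving $Q_{\gtrsim \lambda^2}$ outside the flow via the identity above and trading $H^1_t$ for $W^{1,p}_t$ with $p = 2/(3-2\theta)$ via Sobolev embedding, the product rule produces one term involving $\chi'$ that is handled by unitarity and $(2.4)$, and a second term of the form $\Vert P^{(t)}_{\gtrsim \lambda^2}(\chi(t)\, P^{(t)}_{\ll \lambda^2} e^{-it|\nabla|} G_\lambda) \Vert_{L^2_{t,x}}$. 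Splitting $\chi$ itself by $P^{(t)}_{\ll \lambda^2} + P^{(t)}_{\gtrsim \lambda^2}$ kills the first summand by disjoint temporal frequency support, while the second is absorbed using $\Vert P^{(t)}_{\gtrsim \lambda^2} \chi \Vert_{L^1_t} \lesssim \lambda^{-2}$; matching the three choices of $\beta$ to the target exponents is then the same bookkeeping as in Lemma 2.4 and yields the claim.
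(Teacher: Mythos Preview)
Your overall plan to mirror the Schr\"odinger energy lemma is correct, but you have swapped the two cross-modulation cases and thereby misidentified where the delicate argument is actually needed.

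You claim that the pairing (outer $Q_{\ll\lambda^2}$, input $Q_{\gtrsim\lambda^2}$) ``falls directly within the range of $(2.4)$''. This is true only when $a>0$. When $a=0$, applying $(2.4)$ to the low-modulation piece $\lambda^{l}\Vert\langle\tau-|\xi|\rangle^{\theta}\mathcal{F}(Q_{\ll\lambda^2}\chi\,\mathcal{I}_W[Q_{\gtrsim\lambda^2}G_\lambda])\Vert_{L^2}$ yields at best $\lambda^{l+2\theta-2}\Vert\mathcal{F}(Q_{\gtrsim\lambda^2}G_\lambda)\Vert_{L^2}$, which must then be dominated by the high-modulation part $\lambda^{\beta-1}\Vert\mathcal{F}(Q_{\gtrsim\lambda^2}G_\lambda)\Vert_{L^2}$ of the $R_\lambda$ norm. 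With $\theta=\tfrac12+$ and the choices $\beta=l$ (for $-\tfrac12\le s-l<1$) or $\beta=s-\theta+1$ (for $s-l<-\tfrac12$) from $(2.2)$, the required inequality $l+2\theta-2\le\beta-1$ fails by exactly the $\epsilon$ in $\theta$. This is precisely the place where the paper invokes the Sobolev/product-rule device you describe --- passing to $W^{1,p}_t$ with $p=2/(3-2\theta)$, differentiating, and splitting $\chi$ into low and high temporal frequency --- and the endpoint lands on the $\lambda^{l+2\theta-3}\Vert G_\lambda\Vert_{L^\infty_t L^2_x}$ part of the $R_\lambda$ norm, not on an $X^{s,b}$ piece at all.

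Conversely, the case you flag as the ``genuine obstacle'' (outer $Q_{\gtrsim\lambda^2}$, input $Q_{\ll\lambda^2}$) is simpler than you suggest. The high-modulation wave weight carries exponent $1$, so after conjugating by the flow one just writes $H^1_t=L^2_t+\partial_t L^2_t$ --- this is sub-case $(iii)$ of the Schr\"odinger lemma, with no Sobolev step. Your proposed ``trading $H^1_t$ for $W^{1,p}_t$ via Sobolev embedding'' would go the wrong direction in any case, since $p<2$. The remainder of your sketch (the commutator-type splitting of $\chi$ and the bound $\Vert P^{(t)}_{\gtrsim\lambda^2}\chi\Vert_{L^1_t}\lesssim\lambda^{-2}$) is then exactly what is needed here, and the choice of $\beta$ in $(2.2)$ together with $m_W\gtrsim 1$ closes the exponents.
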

\begin{proof}
We will be short here as most of the steps will be same as in Lemma 2.2.
	\begin{align*}
	\Vert \chi(t) e^{it|\nabla|}v_0 \Vert_{W^{l,s,\theta}_{\lambda}} &= \lambda^l \Vert   \chi(t) e^{it|\nabla|}v_0  \Vert_{L_t^{\infty}L_x^2} + \lambda^{l-a}\Vert \langle \tau-|\xi|\rangle^{\theta}\mathcal{F}((\lambda+|\partial_t|)^a Q_{\ll \lambda^2} \chi(t)e^{it|\nabla|}v_0)\Vert_{L_{\tau,\xi}^2} \\
	&+ \lambda^{\beta-1}\Vert \langle \tau-|\xi|\rangle \mathcal{F}(Q_{\gtrsim \lambda^2}\chi(t)e^{it|\nabla|}v_0)\Vert_{L_{\tau,\xi}^2}
	\end{align*}
The first term above can be controlled using properties $(2.6)$ and $(2.3)$ respectively.
	For the second term, we have using $(2.9)$
	\begin{align*}
	& \lambda^{l-a}\Vert \langle \tau-|\xi|\rangle^{\theta}\mathcal{F}((\lambda+|\partial_t|)^a Q_{\ll \lambda^2} \chi(t)e^{it|\nabla|}v_0)\Vert_{L_{\tau,\xi}^2} \\
%	&= \lambda^{l-a}\Vert \langle \tau-|\xi|\rangle^{\theta}\mathcal{F}((\lambda+|\partial_t|)^a e^{it|\nabla|}P_{\ll \lambda^2}^{(t}\chi(t)v_0\Vert_{L_{\tau,\xi}^2}\\
	&=\lambda^{l-a}\Vert \langle \tau-|\xi|\rangle^{\theta}(\lambda+|\tau|)^a \mathcal{F}( e^{it|\nabla|}P_{\ll \lambda^2}^{(t)}(\chi(t)v_0))\Vert_{L_{\tau,\xi}^2}\\
	&\lesssim \max \big \{\lambda^{l-a} \Vert \langle \tau-|\xi|\rangle^{\theta+2a}\mathcal{F}(e^{it|\nabla|}P_{\ll \lambda^2}^{(t)}(\chi(t)v_0))\Vert_{L_{\tau,\xi}^2}, \lambda^l \Vert \langle \tau-|\xi|\rangle^{\theta} \mathcal{F}(e^{it|\nabla|}P_{\ll \lambda^2}^{(t)}(\chi(t)v_0))\Vert_{L_{\tau,\xi}^2}\big \}\\
	&\lesssim \lambda^l \Vert P_{\ll \lambda^2}^{(t)}(\chi(t)v_0)\Vert_{H_t^{\theta+2a}L_x^2}
	%	&\lesssim \lambda^l \Vert \chi(t)\Vert_{H_t^{\theta+2a}L_x^2} \Vert v_0\Vert_{L_x^2}\\
	\lesssim \lambda^l \Vert v_0\Vert_{L_x^2}.
	\end{align*}
	Similarly, for the last term, we have from the choice of the parameter $\beta$
	\begin{align*}
	\lambda^{\beta-1}\Vert \langle \tau-|\xi|\rangle \mathcal{F}(Q_{\gtrsim \lambda^2}\chi(t)e^{it|\nabla|}v_0)\Vert_{L_{\tau,\xi}^2} 
	&= \lambda^{\beta-1}\Vert \langle \tau-|\xi|\rangle \mathcal{F}(e^{it|\nabla|}P_{\gtrsim \lambda^2}^{(t)}(\chi(t)v_0))\Vert_{L_{\tau,\xi}^2}\\
	&=\lambda^{\beta-1}\Vert P_{\gtrsim \lambda^2}^{(t)}(\chi(t)v_0)\Vert_{H_t^1L_x^2}
	%&\lesssim \lambda^{\beta-1}\Vert \chi(t)\Vert_{H_t^1}\Vert v_0\Vert_{L_x^2}\\
	\lesssim \lambda^l \Vert v_0\Vert_{L_x^2}.
	\end{align*}
	We consider the $W_{\lambda}$ norm for the Duhamel integral now. The non-linearity $G_{\lambda}$ in the $L_t^{\infty}L_x^2$ term is decomposed into high and low modulation and is treated exactly as in Lemma 2.3.
	The low modulation norm of the  Duhamel integral is written as
	\begin{align*}
	&\lesssim \lambda^{l-a}\Big \Vert \langle \tau-|\xi|\rangle^{\theta}\mathcal{F}((\lambda+|\partial_t|)^a Q_{\ll \lambda^2}(\chi(t) \int_0^t e^{i(t-s)|\nabla|}Q_{\ll \lambda^2}G_{\lambda}(s)ds))\Big \Vert_{L_{\tau,\xi}^2} \\
	&+ \lambda^{l-a}\Big \Vert \langle \tau-|\xi|\rangle^{\theta}\mathcal{F}((\lambda+|\partial_t|)^a Q_{\ll \lambda^2}(\chi(t) \int_0^t e^{i(t-s)|\nabla|}Q_{\gtrsim \lambda^2}G_{\lambda}(s)ds))\Big\Vert_{L_{\tau,\xi}^2}=:\text{(I.1)+(I.2)}.
	\end{align*}
	(I.1) is controlled using $(2.4)$. For (I.2), we consider two cases:\\
	{\boldmath $(i)~a>0$:} Using $(2.4)$ and the choice of the parameters $a$ and $\beta$, we have
	\begin{align*}
	\text{(I.2)}& 
	%&=\lambda^{l-a}\Vert \langle \tau-|\xi|\rangle^{\theta}\mathcal{F}((\lambda+|\partial_t|)^a Q_{\ll \lambda^2}(\chi(t) \int_0^t e^{i(t-s)|\nabla|}Q_{\gtrsim \lambda^2}G_{\lambda}(s)ds))\Vert_{L_{\tau,\xi}^2} \\
	\lesssim \lambda^{l-a}\Big\Vert \langle \tau-|\xi|\rangle^{\theta} (\lambda+|\tau|)^a \mathcal{F}(Q_{\ll \lambda^2} \chi(t) \int_0^t e^{i(t-s)|\nabla|}Q_{\gtrsim \lambda^2}G_{\lambda}(s)ds)\Big\Vert_{L_{\tau,\xi}^2}\\
	&	\lesssim \lambda^{l+a+2\theta}\Vert \langle \tau-|\xi|\rangle^{-1}\mathcal{F}(Q_{\gtrsim \lambda^2}G_{\lambda})\Vert_{L_{\tau,\xi}^2}
	\lesssim \lambda^{l+a+2\theta-2}\Vert \mathcal{F}(Q_{\gtrsim \lambda^2}G_{\lambda})\Vert_{L_{\tau,\xi}^2} 
	\lesssim \lambda^{\beta-1} \Vert \mathcal{F}(Q_{\gtrsim \lambda^2}G_{\lambda})\Vert_{L_{\tau,\xi}^2}.
	\end{align*}
	{\boldmath $(ii)~a=0$:}
Using $(2.9)$ and Sobolev embedding, (I.2) can be bounded by
	\begin{align*}
	%\lambda^{l}\Vert \langle \tau-|\xi|\rangle^{\theta}\mathcal{F}( Q_{\ll \lambda^2}(\chi(t) \int_0^t e^{i(t-s)|\nabla|}Q_{\ll \lambda^2}G_{\lambda}(s)ds))\Vert_{L_{\tau,\xi}^2} &
	%  = \lambda^l \Vert P_{\ll \lambda^2}^{(t)} (\chi\int_0^t P_{\gtrsim \lambda^2}^{(t)}e^{-is|\nabla|}G_{\lambda}(s)ds)\Vert_{H_t^{\theta}L_x^2} \\
	\lambda^{l}\Big \Vert P_{\ll \lambda^2}^{(t)} (\chi(t)\int_0^t P_{\gtrsim \lambda^2}^{(s)}e^{-is|\nabla|}G_{\lambda}(s)ds)\Big\Vert_{W_t^{1,p}L_x^2}, ~~~p=\frac{2}{3-2\theta},
	\end{align*}
	which is written in equivalent norm as
	\begin{align*}
	\lambda^l \Big(\Big\Vert P_{\ll \lambda^2}^{(t)} (\chi(t)\int_0^t P_{\gtrsim \lambda^2}^{(s)}e^{-is|\nabla|}G_{\lambda}(s)ds)\Big\Vert_{L_t^pL_x^2} + \Big\Vert (P_{\ll \lambda^2}^{(t)} (\chi(t)\int_0^t P_{\gtrsim \lambda^2}^{(s)}e^{-is|\nabla|}G_{\lambda}(s)ds))'\Big \Vert_{L_t^pL_x^2}\Big)=:\text{(I.21)+(I.22)}
	\end{align*} 
	\begin{align*}
	\text{(I.21)}&\lesssim   \lambda^l \Vert \chi(t)\Vert_{L_t^p L_x^{\infty}} \Big\Vert \int_0^t P_{\gtrsim \lambda^2}^{(s)}e^{-is|\nabla|}G_{\lambda}(s)ds\Big\Vert_{L_t^{\infty} L_x^2} \lesssim \lambda^{l-2}\Vert Q_{\gtrsim \lambda^2}G_{\lambda}\Big\Vert_{L_t^{\infty}L_x^2}.
	\end{align*}
	\begin{align*}
	\text{(I.22)}&\lesssim \lambda^l\Big \Vert (P_{\ll \lambda^2}^{(t)} (\chi'(t)\int_0^t P_{\gtrsim \lambda^2}^{(s)}e^{-is|\nabla|}G_{\lambda}(s)ds)\Big\Vert_{L_t^p L_x^2} + \Vert P_{\ll \lambda^2}^{(t)} (\chi(t) P_{\gtrsim \lambda^2}^{(t)}e^{-it|\nabla|}G_{\lambda})\Vert_{L_t^pL_x^2}=: \text{(I.221)+(I.222)}
	\end{align*}
	(I.221) can be treated like (I.21) while for (I.222), we decompose the time cutoff to get
	%	\begin{align*}
	%		B_1 &\lesssim \chi_T'(t)\Vert_{L_t^p L_x^{\infty}} \Vert \int_0^t P_{\gtrsim \lambda^2}^{(t)}e^{-is|\nabla|}G_{\lambda}(s)ds)\Vert_{L_t^{\infty} L_x^2} \lesssim \lambda^{l-2}\Vert Q_{\gtrsim \lambda^2}G_{\lambda}\Vert_{L_t^{\infty}L_x^2}
	%	\end{align*}
	\begin{align*}
	\text{(I.222)} &\lesssim \lambda^l \Vert P_{\ll \lambda^2}^{(t)}(P_{\ll \lambda^2}^{(t)}\chi(t) + P_{\gtrsim \lambda^2}^{(t)}\chi(t)) P_{\gtrsim \lambda^2}^{(t)}e^{-it|\nabla|}G_{\lambda}\Vert_{L_t^pL_x^2}.
	\end{align*}
	The first term in the above sum vanishes and for the second, we use Bernstein's and H\"older's inequality to obtain the bound
	\begin{align*}
	\lambda^l \Vert P_{\ll \lambda^2}^{(t)}(P_{\gtrsim \lambda^2}^{(t)}\chi(t) P_{\gtrsim \lambda^2}^{(t)}e^{-it|\nabla|}G_{\lambda})\Vert_{L_t^pL_x^2}& \lesssim \lambda^{l+2\theta-1}\Vert P_{\ll \lambda^2}^{(t)}(P_{\gtrsim \lambda^2}^{(t)}\chi(t) P_{\gtrsim \lambda^2}^{(t)}e^{-it|\nabla|}G_{\lambda})\Vert_{L_t^1L_x^2}\\
	&\lesssim \lambda^{l+2\theta-3}\Vert Q_{\gtrsim \lambda^2}G_{\lambda}\Vert_{L_t^{\infty}L_x^2}.
	\end{align*}
	The high modulation Duhamel integral is again decomposed with the first term controlled using $(2.4)$ i.e 
	\begin{align*}
	\lambda^{\beta-1}\Big \Vert \langle \tau-|\xi|\rangle \mathcal{F}(Q_{\gtrsim \lambda^2}(\chi (t) \int_0^t e^{i(t-s)|\nabla|}Q_{\gtrsim \lambda^2}G_{\lambda}(s)ds))\Big\Vert_{L_{\tau,\xi}^2} \lesssim \lambda^{\beta-1}\Vert \mathcal{F}(Q_{\gtrsim \lambda^2}G_{\lambda})\Vert_{L_{\tau,\xi}^2}.
	\end{align*}
	For the other term, we have
	\begin{align*}
	&\lambda^{\beta-1} \Big \Vert \langle \tau-|\xi|\rangle \mathcal{F}(Q_{\gtrsim \lambda^2}(\chi(t) \int_0^t e^{i(t-s)|\nabla|}Q_{\ll \lambda^2}G_{\lambda}(s)ds))\Big\Vert_{L_{\tau,\xi}^2}\\
	&= \lambda^{\beta-1}\Big\Vert P_{\gtrsim \lambda^2}^{(t)}(\chi(t) \int_0^t P_{\ll \lambda^2}^{(s)}e^{-is|\nabla|}G_{\lambda}(s)ds) \Big\Vert_{H_t^1 L_x^2}\\
	&\lesssim  \lambda^{\beta-1}\Big\Vert P_{\gtrsim \lambda^2}^{(t)}(\chi(t) \int_0^t P_{\ll \lambda^2}^{(s)}e^{-is|\nabla|}G_{\lambda}(s)ds))\Big\Vert_{L_{t,x}^2} + \lambda^{\beta-1}\Big\Vert P_{\gtrsim \lambda^2}^{(t)}(\chi(t) \int_0^t P_{\ll \lambda^2}^{(s)}e^{-is|\nabla|}G_{\lambda}(s)ds)'\Big\Vert_{L_{t,x}^2}\\
	& =: \text{(II)+(III)}
	\end{align*}
	Then, using $(2.4)$, the choice of $\beta$ and $|m_W| \gtrsim 1$,
	\begin{align*}
	\text{(II)} \lesssim \lambda^{\beta-1} \Big\Vert \chi(t) \int_0^t P_{\ll \lambda^2}^{(s)}e^{-is|\nabla|}G_{\lambda}(s)ds\Big\Vert_{L_{t,x}^2} 
	%&= \lambda^{\beta-1}\Vert e^{-it|\nabla|}\chi (t)\int_0^t e^{i(t-s)|\nabla|}Q_{\ll \lambda^2}G_{\lambda}(s)ds\Vert_{L_{t,x}^2}\\
	&\lesssim \lambda^{\beta-1}\Vert\langle \tau-|\xi|\rangle^{-1} \mathcal{F}(Q_{\ll \lambda^2}G_{\lambda})\Vert_{L_{\tau,\xi}^2}\\
	&\lesssim \lambda^{l-a} \Vert \langle \tau-|\xi|\rangle^{\theta-1}\mathcal{F}((\lambda+|\partial_t|)^a Q_{\ll \lambda^2}G_{\lambda})\Vert_{L_{\tau,\xi}^2}.
	\end{align*}
	\begin{align*}
	\text{(III)}&\lesssim \lambda^{\beta-1}\Big\Vert P_{\gtrsim \lambda^2}^{(t)}(\chi'(t)\int_0^t P_{\ll \lambda^2}^{(s)}e^{-is|\nabla|}G_{\lambda}(s)ds)\Big\Vert_{L_{t,x}^2} + \lambda^{\beta-1} \Vert P_{\gtrsim \lambda^2}^{(t)}(\chi(t) P_{\ll \lambda^2}^{(t)} e^{-it|\nabla|}G_{\lambda})\Vert_{L_{t,x}^2}.
	\end{align*}
	The first term of (III) is treated like (II) and the second decomposed as follows
	\begin{align*}
	\lambda^{\beta-1}\Vert (P_{\gtrsim \lambda^2}^{(t)} (P_{\ll \lambda^2}^{(t)}\chi(t) P_{\ll \lambda^2}^{(t)}e^{-it|\nabla|}G_{\lambda})\Vert_{L_{t,x}^2} + \lambda^{\beta-1}\Vert (P_{\gtrsim \lambda^2}^{(t)} (P_{\gtrsim \lambda^2}^{(t)}\chi(t) P_{\ll \lambda^2}^{(t)}e^{-it|\nabla|}G_{\lambda})\Vert_{L_{t,x}^2}.
	\end{align*}
	The first term above does not contribute and for the second, we have
	\begin{align*}
%	& \lambda^{\beta-1}\Vert (P_{\gtrsim \lambda^2}^{(t)} (P_{\gtrsim \lambda^2}^{(t)}\chi(t) P_{\ll \lambda^2}^{(t)}e^{-it|\nabla|}G_{\lambda})\Vert_{L_{t,x}^2}\\
	&\lesssim \lambda^{\beta-1}\Vert (P_{\gtrsim \lambda^2}^{(t)}\chi(t) - \chi(t) P_{\gtrsim \lambda^2}^{(t)})   P_{\ll \lambda^2}^{(t)}e^{-it|\nabla|}G_{\lambda})\Vert_{L_{t,x}^2} + \lambda^{\beta-1}\Vert \chi(t) P_{\gtrsim \lambda^2}^{(t)} P_{\ll \lambda^2}^{(t)}e^{-it|\nabla|}G_{\lambda})\Vert_{L_{t,x}^2}\\
	&\lesssim \lambda^{\beta-3}\Vert  P_{\ll \lambda^2}^{(t)}e^{-it|\nabla|}G_{\lambda})\Vert_{L_{t,x}^2}
	\lesssim \lambda^{l-a}\Vert \langle \tau-|\xi|\rangle^{\theta-1}\mathcal{F}((\lambda+|\partial_t|)^a Q_{\ll \lambda^2}G_{\lambda})\Vert_{L_{\tau,\xi}^2},
	\end{align*}
	where the choice of the parameter $\beta$  and $|m_W| \gtrsim 1$ ensure that the last inequality holds.
\end{proof}

\subsection{Multilinear estimates} We recall some already known estimates in this section.

\begin{lemma}(\cite[Proposition 4.4]{zbMATH05555745}, \cite[Corollary 3.6]{zbMATH05964196})
	Let $d\in \{2,3\}$ and $f,g_1,g_2\in L^2(\mathbb{R}^{d+1})$ be such that $\Vert f\Vert_{L^2} = \Vert g_1\Vert_{L^2} = \Vert g_2\Vert_{L^2}=1$. For $k=1,2$ let
	\begin{equation*}
	supp(f)\subset \Big\{(\tau,\xi)\in \mathbb{R}\times \mathbb{R}^d: \frac{\lambda}{2}\leqslant |\xi|\leqslant 2\lambda\Big\}~\cap ~\mathcal{W}_L^{\pm}, \quad supp(g_k)\subset \Big\{(\tau,\xi)\in \mathbb{R} \times \mathbb{R}^d: \frac{\lambda_k}{2}\leqslant |\xi|\leqslant 2\lambda_k\Big\}\cap ~\mathcal{S}_{L_k},
	\end{equation*}
	where the frequencies $\lambda, \lambda_1, \lambda_2$ and the modulations $L,L_1,L_2$ satisfy
	\begin{equation*}
	1\ll  \lambda\lesssim \lambda_1\sim \lambda_2, \quad L,L_1,L_2\lesssim \lambda_1^2.
	\end{equation*}
	Then, for 
	\begin{equation}
	I(f,g_1,g_2) = \int f(\zeta_1-\zeta_2)g_1(\zeta_1)g_2(\zeta_2)d\zeta_1\zeta_2, \quad \zeta_i=(\tau_i,\xi_i), i=1,2,
	\end{equation}the following estimate holds
	\begin{equation}
	|I(f,g_1,g_2)|\lesssim \frac{{(LL_1L_2)}^{\frac{1}{2}}}{{ \lambda_1}^{\frac{1}{2}}}\log{\lambda_1}.
	\end{equation}
\end{lemma}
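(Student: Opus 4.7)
The plan is to reduce the trilinear form $I(f,g_1,g_2)$ to a geometric measure estimate that exploits the transversality between the paraboloid and the cone, following the strategy of Bejenaru--Herr--Holmer--Tataru and Bejenaru--Herr. After the change of variables $\zeta_3 = \zeta_1-\zeta_2$, the form becomes a convolution-restriction integral supported on the three hypersurfaces $\mathcal{S}_{L_1}, \mathcal{S}_{L_2}$, and $\mathcal{W}_L^{\pm}$. By Cauchy--Schwarz in $\zeta_1$ and unfolding $(f \ast g_2)(\zeta_1)$ as the inner convolution, the estimate is equivalent to a bound of the form
\begin{equation*}
\|(g_2 \ast \check{f}) \cdot \mathbf{1}_{\mathrm{supp}(g_1)}\|_{L^2} \lesssim \frac{(LL_1L_2)^{\frac12}}{\lambda_1^{\frac12}} \log \lambda_1,
\end{equation*}
which in turn reduces, via Plancherel and the coarea formula, to controlling the measure of the slice $\{\zeta_2 : \zeta_0+\zeta_2 \in \mathrm{supp}(g_1), \zeta_2 \in \mathrm{supp}(g_2)\}$ uniformly in $\zeta_0 \in \mathrm{supp}(f)$.

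The core of the argument is the resonance identity: for $(\tau_j,\xi_j) \in \mathcal{S}_{L_j}$ and $(\tau_1-\tau_2,\xi_1-\xi_2) \in \mathcal{W}_L^{\pm}$ one has
\begin{equation*}
(\tau_1+|\xi_1|^2) - (\tau_2+|\xi_2|^2) - \bigl((\tau_1-\tau_2) \pm |\xi_1-\xi_2|\bigr) = (\xi_1+\xi_2)\cdot(\xi_1-\xi_2) \mp |\xi_1-\xi_2|,
\end{equation*}
so that the resonance function $\Phi(\xi_1,\xi_2)$ has a gradient of size $\sim |\xi_1+\xi_2| \sim \lambda_1$ outside a thin angular sector where $\xi_1+\xi_2$ is almost parallel to the unit vector $(\xi_1-\xi_2)/|\xi_1-\xi_2|$. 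This is precisely where the normals of the two paraboloids and the cone are linearly independent, and it is the generalized Loomis--Whitney / transversality structure that yields the $\lambda_1^{-1/2}$ gain after integrating along the level sets of $\Phi$.

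Next I would dyadically decompose with respect to the angle $\theta$ between $\xi_1+\xi_2$ and the cone direction, using caps of size $\sim 2^{-j}$. On each piece the transversality is quantitative, and summing the contributions under the constraints $L,L_1,L_2 \lesssim \lambda_1^2$ produces a geometric series whose number of non-trivial terms is $O(\log \lambda_1)$; this is the source of the logarithmic loss. Combining these pieces with Cauchy--Schwarz yields the claimed bound.

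The hard part is handling the near-tangential angular region where the three hypersurfaces fail to be fully transverse: one must verify that on each dyadic angular cap the level sets of $\Phi$ intersect the product of supports in a set of controlled $(d-1)$-dimensional measure, and that the normalization factors $L, L_1, L_2$ combine correctly into $(LL_1L_2)^{1/2}$. Establishing the non-degeneracy on each cap and bookkeeping the dyadic sum cleanly is the technical heart of the estimate, while the $\log \lambda_1$ factor is a genuine loss arising from this angular summation and cannot in general be removed.
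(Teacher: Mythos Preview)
The paper does not prove this lemma: it is stated as a quotation of \cite[Proposition 4.4]{zbMATH05555745} and \cite[Corollary 3.6]{zbMATH05964196}, with no argument given. So there is no ``paper's own proof'' to compare against; the relevant comparison is with the proofs in those references.

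Your sketch is broadly faithful to the approach in the cited works: the resonance identity you write is the correct one, the transversality of the three characteristic hypersurfaces (two paraboloids and the cone) is indeed what drives the $\lambda_1^{-1/2}$ gain, and in three dimensions the angular/cap decomposition and dyadic summation is what produces the $\log\lambda_1$ loss (in $d=2$, as Remark~2.5 notes, the log does not appear). Two points deserve tightening. First, your intermediate reduction to a bound on $\|(g_2 \ast \check f)\cdot \mathbf{1}_{\mathrm{supp}(g_1)}\|_{L^2}$ is not quite the way the argument is run in the references: the cleaner route is to foliate each thickened hypersurface into genuine level hypersurfaces (of thickness $\sim 1$) and apply the nonlinear Loomis--Whitney/convolution-of-surface-measures estimate directly, with the $(LL_1L_2)^{1/2}$ arising from counting the number of such leaves. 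Second, the ``hard part'' you flag is exactly right, but the quantitative lower bound on the transversality (the determinant of the three unit normals) on each angular cap is what needs to be made precise; in the cited papers this is done via a careful case analysis separating the regions $|\xi_1-\xi_2|\sim\lambda_1$ and $|\xi_1-\xi_2|\ll\lambda_1$, not only an angular decomposition in $\theta$. With those adjustments your outline matches the literature.
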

\begin{remark}
	For $d=2$, there is no $\log$ term in the RHS of $(2.11)$ but this does not affect the following analysis.
\end{remark}

	\begin{lemma}(Bilinear Strichartz estimates)(\cite[Proposition 4.3]{zbMATH05555745}, \cite[Proposition 3.3]{zbMATH05964196}) Let $\hat{u_i}, 
	\hat{v}$ denote the space-time Fourier transforms $u_i, v$ respectively for  $u_i, v \in L^2(\mathbb{R}^{d+1}), ~ i=1, 2$.\\
	\textbf{(i)} Let $d\in \{2,3\}$. Let $u_i$ be dyadically Fourier-localised such that 
	\begin{equation*}
	supp(\hat{u_i}) \in \Big\{(\tau,\xi)\in \mathbb{R} \times \mathbb{R}^d :~\frac{\lambda_i}{2}\leqslant |\xi|\leqslant 2\lambda_i\Big\}\cap \mathcal{S}_{L_i}
	\end{equation*}
	for $L_i, \lambda_i \geqslant 1$. Then the following estimate holds
	\begin{equation}
	\Vert u_1 u_2\Vert_{L_{t,x}^2} \lesssim \lambda_1^{\frac{d-1}{2}} \lambda_2^{-\frac{1}{2}} (L_1L_2)^{\frac{1}{2}}\Vert u_1\Vert_{L_{t,x}^2}\Vert u_2\Vert_{L_{t,x}^2}.
	\end{equation}
	\textbf{(ii)} Let $d\leqslant 3$. Let $u,v$ be dyadically Fourier-localised such that
	\begin{equation*}
	supp(\hat{u}) \in \Big\{(\tau,\xi)\in \mathbb{R} \times \mathbb{R}^d :~\frac{\lambda_1}{2}\leqslant |\xi|\leqslant 2\lambda_1\Big\}\cap \mathcal{S}_{L_1}, ~~supp(\hat{v}) \in \Big\{(\tau,\xi)\in \mathbb{R} \times \mathbb{R}^d :~\frac{\lambda_2}{2}\leqslant |\xi|\leqslant 2\lambda_2\Big\}\cap \mathcal{W}_{L_2}^{\pm}
	\end{equation*}
	for $L_i, \lambda_i \geqslant 1$. Then the following estimate holds
	\begin{equation}
	\Vert u v\Vert_{L_{t,x}^2} \lesssim \min\{\lambda_1,\lambda_2\}^{\frac{d-1}{2}} \lambda_1^{-\frac{1}{2}} (L_1L_2)^{\frac{1}{2}}\Vert u\Vert_{L_{t,x}^2}\Vert v\Vert_{L_{t,x}^2}.
	\end{equation}
	\end{lemma}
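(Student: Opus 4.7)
The plan is to derive both \((2.12)\) and \((2.13)\) from a uniform measure bound on the resonant set in frequency space, via the standard duality--Plancherel scheme. First I would use Plancherel to write $\Vert u_1 u_2\Vert_{L^2_{t,x}}^2 = \Vert \hat u_1 \ast \hat u_2\Vert_{L^2_{\tau,\xi}}^2$, and then apply Cauchy--Schwarz in the convolution variable: with $\zeta=(\tau,\xi)$, $\zeta_1=(\tau_1,\xi_1)$, $\zeta_2=\zeta-\zeta_1$, one obtains
\begin{equation*}
\Vert u_1 u_2\Vert_{L^2_{t,x}}^2 \;\leqslant\; \Big(\sup_{(\tau,\xi)} |E(\tau,\xi)|\Big)\,\Vert u_1\Vert_{L^2}^2\,\Vert u_2\Vert_{L^2}^2,
\end{equation*}
where $E(\tau,\xi):=\{\zeta_1 : \zeta_1\in\mathrm{supp}\,\hat u_1,\ \zeta-\zeta_1\in\mathrm{supp}\,\hat u_2\}$. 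Both claims then reduce to bounding $|E(\tau,\xi)|$ uniformly by the square of the stated prefactor.

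For part (i), at fixed $\xi_1$ the two parallel paraboloid modulation strips confine $\tau_1$ to an interval of length $\min(L_1,L_2)$. Adding the two modulation identities and completing the square yields
\begin{equation*}
(\tau_1+|\xi_1|^2)+(\tau-\tau_1+|\xi-\xi_1|^2) \;=\; \tau+2\bigl|\xi_1-\tfrac{\xi}{2}\bigr|^2+\tfrac{|\xi|^2}{2},
\end{equation*}
which must lie in $[-(L_1+L_2),L_1+L_2]$. Hence $\xi_1$ is confined to a spherical shell about $\xi/2$ of thickness $\sim\max(L_1,L_2)/|\xi_1-\xi/2|$, or to a small ball if this radius is tiny. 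A short case analysis separating $\lambda_1\sim\lambda_2$ from $\lambda_1\ll\lambda_2$ then shows the intersection of this shell with the annuli $|\xi_1|\sim\lambda_1$, $|\xi-\xi_1|\sim\lambda_2$ has $\xi_1$-volume $\lesssim \min(\lambda_1,\lambda_2)^{d-1}\max(\lambda_1,\lambda_2)^{-1}\max(L_1,L_2)$ for $d\leqslant 3$; in dimension three the assumption $L_i\lesssim\lambda_i^2$ is precisely what rules out the small-ball regime dominating. Multiplying by the $\tau_1$-measure and taking square roots gives $(2.12)$.

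For part (ii) the same scheme works, only the resonance function is now $\tau+|\xi_1|^2\pm|\xi-\xi_1|$. Since its gradient $2\xi_1\mp\tfrac{\xi-\xi_1}{|\xi-\xi_1|}$ is dominated by the Schr\"odinger term once $|\xi_1|\gtrsim 1$, a foliation by level sets produces a thickened level set of normal width $\sim\max(L_1,L_2)/\lambda_1$; intersecting with the two frequency annuli yields the $\xi_1$-measure bound $\min(\lambda_1,\lambda_2)^{d-1}\lambda_1^{-1}\max(L_1,L_2)$, whence $(2.13)$. The main obstacle is the degenerate regime $\lambda_1\sim\lambda_2$ with $|\xi|$ small, where the resonance sphere can sit nearly tangent to the frequency annulus and the measure estimate in part (i) becomes tight in $d=3$; this is essentially the same degeneracy producing the logarithmic factor in Lemma 2.4.
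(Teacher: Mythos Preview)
The paper does not supply a proof of this lemma: it is quoted verbatim from \cite[Proposition~4.3]{zbMATH05555745} and \cite[Proposition~3.3]{zbMATH05964196} in the section ``We recall some already known estimates''. So there is no in-paper argument to compare against; your sketch is essentially the standard proof given in those references, namely Plancherel plus Cauchy--Schwarz reduced to a uniform bound on the measure of the resonant set.

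Your outline is correct. One small remark on part~(i): you invoke ``the assumption $L_i\lesssim\lambda_i^2$'' to dispose of the small-ball regime in $d=3$, but this hypothesis is not actually present in the lemma as stated (only $L_i,\lambda_i\geqslant 1$). In fact it is not needed: if the ball $\{|\xi_1-\xi/2|\lesssim \max(L_1,L_2)^{1/2}\}$ is so large that the shell estimate degenerates, one simply intersects with the annulus $|\xi_1|\sim\min(\lambda_1,\lambda_2)$ and uses the trivial volume bound $\min(\lambda_1,\lambda_2)^{d}$, which is dominated by $\min(\lambda_1,\lambda_2)^{d-1}\max(\lambda_1,\lambda_2)^{-1}\max(L_1,L_2)$ precisely in that regime (after checking that the ball, being centred at $\xi/2$, only meets the smaller annulus when its radius is $\gtrsim\max(\lambda_1,\lambda_2)$). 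For part~(ii) your coarea argument is fine once $\lambda_1\gtrsim 1$, since then $|2\xi_1\pm(\xi-\xi_1)/|\xi-\xi_1||\geqslant 2\lambda_1-1\gtrsim\lambda_1$; the degenerate case $\lambda_1\sim 1$ is handled by the trivial Bernstein bound.
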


\begin{remark}
(i)	If the frequencies $\lambda_1$ and $\lambda_2$ are such that $\lambda_1\ll \lambda_2$ or $\lambda_2\ll \lambda_1$, then the  estimate $(2.12)$ holds for $d=1$ as well.\\
(ii) The estimates $(2.12)$ and $(2.13)$ remain valid if we replace the functions on the LHS by their complex conjugates.
\end{remark}

\begin{lemma}(Product estimate)\cite[Lemma 2.7]{candy2019zakharov}
	Let $a \in \mathbb{R},  1 \leqslant {\tilde{p}}, {\tilde{q}}, {\tilde{r}}, p, q, r \leqslant \infty$ with $\frac{1}{p} = \frac{1}{q} + \frac{1}{r}$ and  $\frac{1}{\tilde{p}} = \frac{1}{\tilde{q}} + \frac{1}{\tilde{r}}$. Then, for all $\mu >0$,
	\begin{equation*}
	\Vert (\mu +|\partial_t|)^a(vu)\Vert_{L_t^{\tilde{p}}L_x^p} \lesssim \mu^{-|a|} \Vert (\mu +|\partial_t|)^{|a|}v \Vert_{L_t^{\tilde{r}}L_x^r}  \Vert (\mu +|\partial_t|)^{|a|}u \Vert_{L_t^{\tilde{q}}L_x^q}.
	\end{equation*} 
\end{lemma}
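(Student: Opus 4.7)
My plan would be to prove this generalized fractional Leibniz estimate via a temporal Littlewood--Paley/paraproduct decomposition, centered on the sub-additivity of the symbol $m_a(\tau) := (\mu + |\tau|)^a$ for $a \geq 0$.

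First I would reduce to the case $a \geq 0$: if $a < 0$, write $(\mu+|\partial_t|)^a$ as temporal convolution with a kernel $K_a$. Rescaling via $(\mu+|\tau|)^a = \mu^a (1+|\tau/\mu|)^a$ identifies $K_a$ as a dilate of $\mathcal{F}^{-1}((1+|\tau|)^a)$, which is integrable when $a < 0$, with $\|K_a\|_{L^1_t} \lesssim \mu^a = \mu^{-|a|}$. Young's inequality in $t$ together with H\"older in $x$ gives $\|(\mu+|\partial_t|)^a(vu)\|_{L^{\tilde p}_t L^p_x} \lesssim \mu^{-|a|} \|v\|_{L^{\tilde r}_t L^r_x}\|u\|_{L^{\tilde q}_t L^q_x}$, and the bare norms on the right can be freely replaced by $(\mu+|\partial_t|)^{|a|}$-weighted norms since $(\mu+|\tau|)^{|a|}$ dominates its low-frequency value uniformly.

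For $a > 0$, the engine is the pointwise inequality
\begin{equation*}
(\mu+|\tau_1+\tau_2|)^a \leq 2^a \bigl((\mu+|\tau_1|)^a + (\mu+|\tau_2|)^a\bigr),
\end{equation*}
which follows from $\mu+|\tau_1+\tau_2|\leq (\mu+|\tau_1|)+(\mu+|\tau_2|)$ combined with $(A+B)^a \leq 2^a(A^a+B^a)$. Inserting this into the temporal Fourier representation $\mathcal{F}_t(vu) = \mathcal{F}_tv \ast \mathcal{F}_tu$ decomposes $(\mu+|\partial_t|)^a(vu)$ into two bilinear pieces where the weight has been transferred onto $v$ (respectively $u$). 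In physical space these correspond to products of $(\mu+|\partial_t|)^a v$ with $u$ and of $v$ with $(\mu+|\partial_t|)^a u$, up to benign temporal convolution kernels. H\"older's inequality in $(t,x)$ with $1/\tilde p = 1/\tilde q + 1/\tilde r$ and $1/p = 1/q + 1/r$ then yields
\begin{equation*}
\|(\mu+|\partial_t|)^a(vu)\|_{L^{\tilde p}_t L^p_x} \lesssim \|(\mu+|\partial_t|)^a v\|_{L^{\tilde r}_t L^r_x}\|u\|_{L^{\tilde q}_t L^q_x} + \|v\|_{L^{\tilde r}_t L^r_x}\|(\mu+|\partial_t|)^a u\|_{L^{\tilde q}_t L^q_x},
\end{equation*}
after which the bare norms are upgraded into $(\mu+|\partial_t|)^a$-weighted ones at the cost of the factor $\mu^{-a} = \mu^{-|a|}$, by the same Young argument as for $a<0$.

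The main technical hurdle I expect is converting the Fourier-side sub-additivity into a bona fide operator inequality in $L^{\tilde p}_t L^p_x$ for general $\tilde p, p \neq 2$, since the pointwise bound on the symbol does not transfer to a pointwise bound in $(t,x)$. I would address this by a smooth dyadic decomposition of $m_a$ in $\tau$ at scales $2^k \mu$, bounding each piece uniformly via the Mikhlin--H\"ormander multiplier theorem, and then summing the resulting contributions geometrically in $k$ (the exponent $a$ appears only as a finite power of $2^k$, which is harmless against the rapidly-decaying kernel estimates coming from smoothness of the cutoffs).
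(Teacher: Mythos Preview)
The paper does not prove this lemma; it is quoted from \cite[Lemma~2.7]{candy2019zakharov} without argument, so there is no in-paper proof to compare your proposal against. Your reduction to $a\geqslant 0$ via the $L^1_t$-kernel of $(\mu+|\tau|)^{-|a|}$ is correct, as is the driving sub-additivity $(\mu+|\tau_1+\tau_2|)^a\lesssim(\mu+|\tau_1|)^a+(\mu+|\tau_2|)^a$.

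One step does need repair. The statement allows all exponents in $[1,\infty]$, and the endpoint $\tilde r=\infty$ is actually used throughout Sections~3--4 of the paper (one factor is routinely placed in $L^\infty_t L^2_x$). The Mikhlin--H\"ormander theorem you invoke fails at $\tilde p\in\{1,\infty\}$, so that part of your plan would not close the endpoint cases. The fix is to bypass Mikhlin entirely: the smooth projectors $P^{(t)}_\lambda$, $P^{(t)}_{\leqslant\lambda}$, and the compound multipliers $(\mu+|\tau|)^{\pm a}\phi_\lambda(\tau)/(\mu+\lambda)^{\pm a}$ all have $L^1_t$ kernels with norm $O(1)$ uniformly in $\lambda,\mu$, and Young's inequality in $t$ then gives boundedness on every $L^{\tilde p}_t L^p_x$, endpoints included. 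With this tool in hand, the paraproduct argument you sketch goes through: in the high-low piece $\sum_\lambda P^{(t)}_\lambda v\cdot P^{(t)}_{\leqslant\lambda}u$ the weight $(\mu+|\partial_t|)^a$ lands on the high factor via an $L^1$-kernel comparison, and the resulting expression is precisely the paraproduct $\pi\bigl((\mu+|\partial_t|)^a v,\,u\bigr)$, whose boundedness is standard. Your phrase ``sum geometrically in $k$'' thus hides the fact that the sum reassembles into a single paraproduct before any norm is taken; once phrased this way no geometric summation is needed.
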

\noindent With all the required tools at our disposal, we head to prove the multilinear estimates.

\section{Multilinear estimates for Schr{\"o}dinger non-linearity}
\begin{theorem}Let $d\leqslant 3$ and $s,l$ in the range $(1.3)$. There exist $a,b,s,l,\beta,\theta \in \mathbb{R}$ such that the estimate 
	\begin{equation}
	\Vert u Re(v)\Vert_{N^{s,l,\theta-1}} \lesssim \Vert u\Vert_{S^{s,l,\theta}}\Vert v\Vert_{W^{l,s,\theta}}
	\end{equation}
	holds.
\end{theorem}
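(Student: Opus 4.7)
The plan is to estimate, via a full Littlewood--Paley decomposition of both $u$ and $v$, each of the three components of the $N^{s,l,\theta-1}_\lambda$ norm (the $L_t^\infty L_x^2$ piece at low temporal frequency, the low-modulation $C_{\ll \lambda^2}$ piece, and the temporally-weighted high-modulation $C_{\gtrsim \lambda^2}$ piece) on the dyadic block $P_\lambda(u_{\lambda_1}\,Re(v_{\lambda_2}))$. Writing $u\,Re(v)=\tfrac{1}{2}(uv+u\bar v)$ and summing over the output frequency $\lambda$ and input frequencies $\lambda_1,\lambda_2$, the usual three frequency regimes appear: high-high to low ($\lambda_1\sim\lambda_2\gtrsim\lambda$), high-low ($\lambda_2\ll\lambda_1\sim\lambda$), and low-high ($\lambda_1\ll\lambda_2\sim\lambda$). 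In each regime I further decompose in the paraboloid modulation $\langle\tau+|\xi|^2\rangle$ of both the product and of $u_{\lambda_1}$, and in the cone modulation $\langle\tau\mp|\xi|\rangle$ of $v_{\lambda_2}$, exploiting the resonance identity $(\tau+|\xi|^2)-(\tau_1+|\xi_1|^2)-(\tau_2\pm|\xi_2|)=2\xi_1\cdot\xi_2+|\xi_2|^2\mp|\xi_2|$, which forces the maximum of the three modulations to be at least comparable to the frequency-algebra quantity on the right-hand side.

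The workhorses are the bilinear Strichartz estimates of Lemma 2.6, which reduce an $L_{t,x}^2$ product to Fourier-side factors, and the Loomis--Whitney trilinear estimate of Lemma 2.4, which is essential in the resonant high-high-to-low case when all three modulations are simultaneously much smaller than $\lambda_1^2$ so that genuine transversality of the paraboloid and cone must be invoked. To bound the low-modulation component I will use duality against a test function to recast the estimate as the trilinear integral $I(f,g_1,g_2)$ of (2.10) and apply (2.11); to bound the high-modulation and $L_t^\infty L_x^2$ pieces I will instead apply (2.13) directly to the pair $(u_{\lambda_1},v_{\lambda_2})$, absorbing the $(L_1L_2)^{1/2}$ factors by the modulation weights built into $S^{s,l,\theta}$ and $W^{l,s,\theta}$. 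Within each of the three sub-regions of (1.3), where the parameters $a,b,s',\theta',\beta$ take their respective values from (2.2), the verification reduces to checking that the exponent of the maximal dyadic index is strictly negative, after which summation in the remaining dyadic parameters is closed by Cauchy--Schwarz and Schur's test.

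The main obstacle is the high-high-to-low regime with all three modulations $\ll\lambda_1^2$: here a naive $X^{s,\theta}$ argument fails at the boundaries $2s-l=\tfrac{1}{2}$, $s-l=-1$ and $s-l=2$, and one must genuinely use the sharp gain $\lambda_1^{-1/2}\log\lambda_1$ of Lemma 2.4. The parameters $a$ and $b$ are engineered precisely to absorb the endpoint losses: $a>0$ in the regime $s-l\geqslant 1$ costs $\lambda^{-a}$ on low temporal frequencies of $u_{\lambda_1}$ through the weight $m_S(\tau)$ but is recouped via the strengthened high-modulation exponent $\theta'+a$ in the $S$-norm, while $b>0$ plays the symmetric role for $s-l\leqslant 0$, transferring regularity from the high-modulation Schr\"odinger piece to the wave. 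The delicate bookkeeping is the careful case analysis across these three sub-regions together with the two modulation-size splits of $v_{\lambda_2}$ ($\ll\lambda_2^2$ versus $\gtrsim\lambda_2^2$); once this resonant case is settled, the high-low and low-high regimes follow more routinely from (2.12)--(2.13) together with Lemma 2.7 for the temporal weight bookkeeping, and dyadic summation converges owing to the strict inequalities in (1.3).
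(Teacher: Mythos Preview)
Your proposal correctly identifies the overall strategy and matches the paper's approach: Littlewood--Paley decomposition in three frequency regimes, modulation decomposition via the resonance identity, duality for the low-modulation output component, and the combination of bilinear Strichartz (Lemma 2.6) with the Loomis--Whitney trilinear estimate (Lemma 2.4). The role you describe for the parameters $a,b,s',\theta',\beta$ and the use of the product estimate for the temporal weights are also in line with the paper.

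There is, however, a mis-identification of where Lemma~2.4 is actually needed. You locate the main obstacle in the high-high-to-low regime $\lambda_1\sim\lambda_2\gtrsim\lambda$, but after duality the trilinear form carries two Schr\"odinger factors (the input $u_{\lambda_1}$ and the dual $w_{\lambda}$) and one wave at $\lambda_2$; Lemma~2.4 requires the wave frequency to lie below \emph{both} Schr\"odinger frequencies, so within your high-high-to-low regime it only applies on the diagonal $\lambda\sim\lambda_1\sim\lambda_2$. The decisive use of Lemma~2.4 is rather in what you call the \emph{high-low} regime $\lambda_2\ll\lambda_1\sim\lambda$ with all three modulations $\ll\lambda_1^2$ (the paper's Case~II, subcases~5--6): here the two Schr\"odinger frequencies are comparable and large while the wave sits at small $\lambda_2$, exactly the Loomis--Whitney configuration. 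If you try to close this case with (2.12)--(2.13) alone, pairing the wave with either Schr\"odinger factor via (2.13) costs an extra $\lambda_2^{(d-1)/2}$ relative to the $\lambda_1^{-1/2}$ gain of Lemma~2.4, and pairing the two comparable-frequency Schr\"odinger factors via (2.12) costs $\lambda_1^{(d-2)/2}$; for $d=2,3$ either route forces a constraint of the form $l>2\theta-\tfrac12$ rather than $l>2\theta-\tfrac32$, and you miss the strip $-\tfrac12<l\leqslant\tfrac12$ of region~(1.3). Once you redirect Lemma~2.4 to this case (and to its diagonal limit), the rest of your plan goes through as in the paper.
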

\begin{proof}

 We choose the parameters $a,b,s',l,\beta,\theta'$ as in $(2.2)$ and begin by noting the following characterisation of the $N^{s,l,\theta-1}$ norm for the non-linearity in the case $0\leqslant a< \frac{1}{2}$:
\begin{equation}
\Vert F_{\lambda} \Vert_{N_{\lambda}^{s,l,\theta-1}} \approx  \lambda^{s}\Vert \langle \tau+|\xi|^2\rangle^{\theta-1} \mathcal{F}(C_{\ll \lambda^2}F_{\lambda})\Vert_{L_{\tau,\xi}^2}
+\lambda^{s'-2a+b}\Vert \langle \tau+|\xi|^2\rangle^{\theta'-1}\mathcal{F}((\lambda+|\partial_t|)^a C_{\gtrsim \lambda^2}F_{\lambda})\Vert_{L_{\tau,\xi}^2}.
\end{equation}For $\frac{1}{r} = \frac{1}{2}-a$, using Bernstein's inequality and Sobolev embedding, we have
\begin{align}
\lambda^{s+2\theta-3}	\Vert P_{\ll \lambda^2}^{(t)} F_{\lambda} \Vert_{L_t^{\infty} L_x^2}\lesssim \lambda^{s+2\theta-3+\frac{2}{r}}\Vert P_{\ll \lambda^2}^{(t)}F_{\lambda}\Vert_{L_t^r L_x^2}
&\lesssim \lambda^{s+2\theta-2-a}\Vert P_{\ll \lambda^2}^{(t)}F_{\lambda}\Vert_{L_{t,x}^2}\\ \nonumber
&\lesssim \lambda^{s+2\theta-2-2a}\Vert (\lambda+|\partial_t|)^a P_{\ll \lambda^2}^{(t)}F_{\lambda}\Vert_{L_{t,x}^2}\\ \nonumber
&\lesssim \lambda^{s+2\theta-2-2a+b}\Vert \mathcal{F}((\lambda+|\partial_t|)^a P_{\ll \lambda^2} \nonumber
^{(t)}C_{\approx \lambda^2}F_{\lambda})\Vert_{L_{\tau,\xi}^2}\\ \nonumber
&\lesssim \lambda^{s'-2a+b}\Vert \langle \tau+|\xi|^2\rangle^{\theta'-1}\mathcal{F}((\lambda+|\partial_t|)^a C_{\gtrsim \lambda^2}F_{\lambda})\Vert_{L_{\tau,\xi}^2 },
\end{align}
where we obtain the second last inequality by noting that $b\geqslant 0$.\\
So, for $0\leqslant a <\frac{1}{2}$, it suffices to show:
\begin{equation}
\Big(\sum_{\lambda_0\in 2^{\mathbb{N}}}\lambda_0^{2s} \Vert \langle \tau+|\xi|^2\rangle^{\theta-1} \mathcal{F}(C_{\ll\lambda_0^2} P_{\lambda_0}(uv))\Vert_{L_{\tau,\xi}}^2\Big)^{1/2} \lesssim \Vert u \Vert_{S^{s,l,\theta}} \Vert v \Vert_{W^{l,s,\theta}}
\end{equation}
	\begin{equation}
\Big(\sum_{\lambda_0\in 2^{\mathbb{N}}}\lambda_0^{2(s'-2a+b)} \Vert \langle \tau+|\xi|^2\rangle^{\theta'-1} \mathcal{F}((\lambda_0+|\partial_t|)^a C_{\gtrsim\lambda_0^2} P_{\lambda_0}(uv))\Vert_{L_{\tau,\xi}^2}^2\Big)^{1/2} \lesssim \Vert u \Vert_{S^{s,l,\theta}} \Vert v \Vert_{W^{l,s,\theta}}.
\end{equation}
In the case $\frac{1}{2}\leqslant a < 1$, we additionally need to prove the following:
\begin{equation}
\Big(\sum_{\lambda_0\in 2^{\mathbb{N}}}\lambda_0^{2(s+2\theta-3)} \Vert P_{\ll \lambda_0^2}^{(t)}P_{\lambda_0}(uv)\Vert_{L_{t}^{\infty}L_x^2}^2\Big)^{\frac{1}{2}} \lesssim \Vert u \Vert_{L_{t}^{\infty}H_x^s} \Vert v \Vert_{L_{t}^{\infty}H_x^l}.
\end{equation}

We now proceed to prove $(3.4)$ and $(3.5)$.
We decompose the non-linearity $uv$ as
\begin{equation*}
uv = \sum_{\lambda_0 \in 2^{\mathbb{N}}}P_{\lambda_0}(uv)
\end{equation*}
Further, we distinguish the high-low, low-high and the balanced interactions as follows:
\begin{equation*}
P_{\lambda_0}(uv) = \sum_{\lambda_1 \in 2^{\mathbb{N}}}P_{\lambda_0}(u_{\lambda_1}v) = \sum_{\lambda_1 \ll \lambda_0 \in 2^{\mathbb{N}}}P_{\lambda_0}(u_{\lambda_1}v) + \sum_{\lambda_0 \ll \lambda_1 \in 2^{\mathbb{N}}}P_{\lambda_0}(u_{\lambda_1}v) + \sum_{\lambda_0 \sim \lambda_1 \in 2^{\mathbb{N}}}P_{\lambda_0}(u_{\lambda_1}v).
\end{equation*}

	We state that high (low) modulation for the spatially localised Schr{\"o}dinger solution $u_{|\xi|}$ means $\langle \tau+|\xi|^2\rangle \gtrsim (\ll) |\xi|^2$ while for the wave $v_{|\xi|}$, it means $\langle \tau - |\xi|\rangle \gtrsim (\ll) |\xi|^2$. We abbreviate high modulation by $H$ and low modulation by $L$.
The subscripts $1$ and $2$ are appended with $\tau$  and $\xi$ to distinguish the temporal frequencies and the spatial frequencies of the Schr{\"o}dinger and wave solutions.
The output temporal frequencies are given by $\tau_0 = \tau_1 +\tau_2$. For the Schr{\"o}dinger solution, low modulation occurs when the temporal frequencies are of size $\sim |\xi|^2 (\tau=-|\xi|^2)$. 
For the wave solution, temporal frequencies of size $\sim |\xi| (\tau = |\xi|)$ lead to a free wave solution. Other than that, the sizes of the modulation and the temporal frequencies go hand in hand for a frequency localised wave solution.\\
All the possible interactions are treated individually.  In cases with low output modulation, the required bilinear estimates are reduced to trilinear estimates by duality using the property (vii) from section 2.4. To distinguish the dual term, we use $-s$ and $-l$ instead of $s$ and $l$, respectively, in the subscripts of the norms for the Schr\"odinger and wave components.
Each subcase can be summed up depending on the size of the interacting frequencies. The constraints required to obtain the estimate for the full norms are listed at the end of each subcase, and we do not exclude points with logarithmic losses since our result does not cover the boundaries.
\begin{remark}
	In the following, we treat the case of high spatial (and temporal) frequencies ($|\xi_i|,|\tau_i| \gg 1, i=0,1,2$). In most of the cases, the estimates for the low frequencies (all the frequencies are of size $\sim 1$ or the size of the lowest frequency in the interaction is $\sim 1$) follow using the same arguments without having to decompose the space-time Fourier supports of the interacting solutions. Hence, we do not mention it explicitly for each case. However, in some cases the arguments need to be modified for the low frequency cases. We shall do it wherever required.
\end{remark}

	\textbf{Case I. Low to high interaction (\boldmath $\lambda_1 \ll \lambda_0$)}\\
We decompose $u_{\lambda_1}$ and $v_{\lambda_0}$ as follows:
\begin{equation*}
u_{\lambda_1} = C_{\ll \lambda_1^2}u_{\lambda_1} + C_{\gtrsim \lambda_1^2}u_{\lambda_1} , \quad v_{\lambda_0} = Q_{\ll \lambda_0^2}v_{\lambda_0} + Q_{\gtrsim \lambda_0^2}v_{\lambda_0}.
\end{equation*}
The following interactions can be distinguished on the basis of the size of the modulation:\\
	{1. \boldmath$H \times H \rightarrow H$}\\
We require to prove $(3.5)$. Using the size of the modulation, the product estimate and Bernstein's inequality respectively, we have
\begin{align*}
&\lambda_0^{s'-2a+b}\Vert \langle\tau + |\xi|^2\rangle^{\theta' -1}\mathcal{F}((\lambda_0 + |\partial_t|)^a C_{\gtrsim \lambda_0^2} (C_{\gtrsim \lambda_1^2}u_{\lambda_1}Q_{\gtrsim \lambda_0^2}v_{\lambda_0})) \Vert_{L_{\tau,\xi}^2}\\
&\lesssim \lambda_0^{s'-3a+b+2\theta'-2}\lambda_1^{\frac{d}{2}}\Vert (\lambda_0 +|\partial_t|)^a C_{\gtrsim \lambda_1^2}u_{\lambda_1}\Vert_{L_t^{\infty}L_x^2} \Vert (\lambda_0 +|\partial_t|)^a Q_{\gtrsim \lambda_0^2}v_{\lambda_0} \Vert_{L_{t,x}^2}\\
&\lesssim \lambda_0^{s'-3a+b+2\theta'-\beta-1}\lambda_1^{\frac{d}{2}-s+2a}\sup_{|\tau_1|}\Big(\frac{\lambda_0+|\tau_1|}{\lambda_1 +|\tau_1|}\Big)^a \sup_{|\tau_2|\gtrsim \lambda_0^2}\frac{(\lambda_0+|\tau_2|)^a}{\langle \tau_2-\lambda_0\rangle}\\
&\quad \times\lambda_1^{s'-2a}\Vert(\lambda_1 +|\partial_t|)^a C_{\gtrsim \lambda_1^2}u_{\lambda_1}\Vert_{L_{t}^{\infty}L_x^2}~ \lambda_0^{\beta-1}\Vert \langle \tau-|\xi|\rangle \mathcal{F}(Q_{\gtrsim \lambda_0^2}v_{\lambda_0})\Vert_{L_{\tau,\xi}^2}\\
&\lesssim \lambda_0^{s'+b+2\theta'-\beta-3}\lambda_1^{\frac{d}{2}-s+a} \Vert u_{\lambda_1}\Vert_{S_{\lambda_1}^{s,l,\theta}} \Vert v_{\lambda_0}\Vert_{W_{\lambda_0}^{l,s,\theta}}.
\end{align*}
The last inequality above follows from $(2.5)$. Since $s'+2\theta' = s+2\theta$, we can sum up the subcase in $\lambda_1\ll \lambda_0$ to obtain $(3.5)$ provided $s-\beta\leqslant 3-2\theta-b$ and $\beta\geqslant -3+\frac{d}{2}+2\theta+a+b$.\\

{2. \boldmath $H\times H \rightarrow L$}\\
%If $\tau_S$ and $\tau_W$ are such that $|\tau_S|, |\tau_W| \lesssim \lambda_0^2$
We consider two cases for the temporal frequencies:\\
\textbf{a.}  $|\tau_1|\lesssim \lambda_0^2$, $|\tau_2|\sim \lambda_0^2$\\
We prove $(3.4)$ using duality. We consider the expression $I$ defined in $(2.10)$ and use Cauchy-Schwarz inequality as follows:
\begin{align}
&|I(\mathcal{F}(Q_{\sim \lambda_0^2}v_{\lambda_0}),  \mathcal{F}(C_{\ll\lambda_0^2}w_{\lambda_0}),\mathcal{F}(C_{\lambda_1^2\lesssim  \cdot \lesssim \lambda_0^2}u_{\lambda_1}))|\lesssim \Vert Q_{\sim \lambda_0^2}v_{\lambda_0}\Vert_{L_{t,x}^2} \Vert C_{\lambda_1^2\lesssim \cdot \lesssim \lambda_0^2}u_{\lambda_1} \overline{C_{\ll\lambda_0^2}w_{\lambda_0}}\Vert_{L_{t,x}^2}.
\end{align}
On decomposing the space-time Fourier supports of $C_{\lambda_1^2\lesssim \cdot \lesssim \lambda_0^2}u_{\lambda_1}$, $\overline{C_{\ll\lambda_0^2}w_{\lambda_0}}$ into pieces $L_1$ and $L_0$, respectively and applying the bilinear estimate (2.12), we get
\begin{align*}
(3.7) &\lesssim \Vert Q_{\sim \lambda_0^2}v_{\lambda_0}\Vert_{L_{t,x}^2} \sum_{\substack{L_0\ll \lambda_0^2\\L_1\lesssim \lambda_0^2}}(L_0L_1)^{\frac{1}{2}}\frac{\lambda_1^{\frac{d-1}{2}}}{\lambda_0^{\frac{1}{2}}}\Vert C_{L_1}u_{\lambda_1}\Vert_{L_{t,x}^2}\Vert C_{L_0}w_{\lambda_0}\Vert_{L_{t,x}^2}\\
&\lesssim \lambda_0^{-\beta-\frac{5}{2}+s+2\theta}\lambda_1^{\frac{d-1}{2}-s+a} \lambda_1^{s'-2a} \Vert \langle \tau+|\xi|^2\rangle^{\theta'}\mathcal{F}((\lambda_1+|\partial_t|)^a C_{\lambda_1^2\lesssim \cdot \lesssim \lambda_0^2}u_{\lambda_1})\Vert_{L_{\tau,\xi}^2} \\
&\quad  \times  \lambda_0^{\beta-1}\Vert \langle \tau - |\xi|\rangle \mathcal{F}(Q_{\sim \lambda_0^2}v_{\lambda_0})\Vert_{L_{\tau,\xi}^2} \lambda_0^{-s} \Vert \langle \tau+|\xi|^2\rangle^{1-\theta}\mathcal{F}(C_{\ll\lambda_0^2}w_{\lambda_0})\Vert_{L_{\tau,\xi}^2}\\
&\lesssim \lambda_0^{-\beta-\frac{5}{2}+s+2\theta}\lambda_1^{\frac{d-1}{2}-s+a} \Vert u_{\lambda_1}\Vert_{S_{\lambda_1}^{s,l,\theta}} \Vert v_{\lambda_0}\Vert_{W_{\lambda_0}^{l,s,\theta}}\Vert w_{\lambda_0}\Vert_{S_{\lambda_0}^{-s,l,1-\theta}}.
\end{align*}
We require $s-\beta\leqslant \frac{5}{2}-2\theta$ and $\beta\geqslant -3+2\theta+\frac{d}{2}+a$ to sum the above up and obtain the estimate for the full norms.\\
In case $\lambda_1\sim \lambda_0\sim 1$, we have
\begin{align*}
|I(\mathcal{F}(Q_{\sim \lambda_0^2}v_{\lambda_0}),  \mathcal{F}(C_{\ll\lambda_0^2}w_{\lambda_0}),\mathcal{F}(C_{\lambda_1^2\lesssim  \cdot \lesssim \lambda_0^2}u_{\lambda_1}))|& \lesssim \Vert Q_{\sim \lambda_0^2}v_{\lambda_0}\Vert_{L_{t,x}^2} \Vert C_{\lambda_1^2\lesssim \cdot \lesssim \lambda_0^2}u_{\lambda_1} \Vert_{L_{t}^{\infty}L_x^2} 	\Vert C_{\ll\lambda_0^2}w_{\lambda_0}\Vert_{L_{t,x}^2}\\
&\lesssim \Vert u_{\lambda_1}\Vert_{S_{\lambda_1}^{s,l,\theta}} \Vert v_{\lambda_0}\Vert_{W_{\lambda_0}^{l,s,\theta}}\Vert w_{\lambda_0}\Vert_{S_{\lambda_0}^{-s,l,1-\theta}}.
\end{align*}
\textbf{b.} $|\tau_1|,|\tau_2|\gg \lambda_0^2$\\
Using H{\"o}lder's and Bernstein's inequality, we have
\begin{align*}
&	|I(\mathcal{F}(Q_{\gg \lambda_0^2}v_{\lambda_0}), \mathcal{F}(C_{\ll \lambda_0^2}w_{\lambda_0}),\mathcal{F}(C_{\gg \lambda_0^2}u_{\lambda_1})) |\\
&\lesssim \lambda_1^{\frac{d}{2}}\Vert C_{\gg \lambda_0^2}u_{\lambda_1} \Vert_{L_{t,x}^2} \Vert Q_{\gg \lambda_0^2}v_{\lambda_0}\Vert_{L_{t,x}^2} \Vert C_{\ll \lambda_0^2}w_{\lambda_0}\Vert_{L_t^{\infty} L_x^2}\\
&\lesssim\lambda_1^{\frac{d}{2}-s'+2a}\lambda_0^{-2a-\beta-2+2(\theta-\theta')+s}\lambda_1^{s'-2a}\Vert \langle \tau+|\xi|^2\rangle^{\theta'}\mathcal{F}((\lambda_1+|\partial_t|)^a C_{\gg \lambda_0^2}u_{\lambda_1})\Vert_{L_{\tau,\xi}^2}\\
&\quad \times\lambda_0^{\beta-1}\Vert \langle \tau-|\xi|\rangle \mathcal{F}(Q_{\gg \lambda_0^2}v_{\lambda_0})\Vert_{L_{\tau,\xi}^2}
 \lambda_0^{-s}\Vert C_{\ll \lambda_0^2}w_{\lambda_0}\Vert_{L_t^pL_x^2}\\
&\lesssim\lambda_1^{\frac{d}{2}-s'+2a}\lambda_0^{-2a-\beta-2+2(\theta-\theta')+s}\Vert u_{\lambda_1}\Vert_{S_{\lambda_1}^{s,l,\theta}}\Vert v_{\lambda_0}\Vert_{W^{l,s,\theta}_{\lambda_0}}\Vert w_{\lambda_0}\Vert_{S_{\lambda_0}^{-s,l,1-\theta}}.
\end{align*}
Note that the penultimate inequality follows from  Bernstein's inequality for $\frac{1}{p} = \theta-\frac{1}{2}$ while the ultimate comes from the embedding relation $(2.7)$. For summability, we require $s-\beta\leqslant 2+2(\theta'-\theta)+2a$ and $\beta\geqslant -2+\frac{d}{2}$.\\

	{3. \boldmath $H \times L \rightarrow H$}\\
Using the size of the modulation, the product estimate and Bernstein's inequality, we obtain
\begin{align*}
&\lambda_0^{s'-2a+b}\Vert \langle\tau + |\xi|^2\rangle^{\theta' -1}\mathcal{F}((\lambda_0 + |\partial_t|)^a C_{\gtrsim \lambda_0^2}(C_{\gtrsim \lambda_1^2}u_{\lambda_1}Q_{\ll \lambda_0^2}v_{\lambda_0})) \Vert_{L_{\tau,\xi}^2}\\
&\lesssim  \lambda_0^{s'-3a+b+2\theta' -2}\lambda_1^{\frac{d}{2}}\Vert (\lambda_0 + |\partial_t|)^a C_{\gtrsim \lambda_1^2}u_{\lambda_1}\Vert_{L_{t,x}^2}\Vert(\lambda_0 + |\partial_t|)^a Q_{\ll \lambda_0^2}v_{\lambda_0} \Vert_{L_t^{\infty}L_x^2}\\ 
&\lesssim  \lambda_0^{s'-2a+b+2\theta' -2-l}\lambda_1^{\frac{d}{2}-s+2a-2\theta}\sup_{|\tau_1|}\Big(\frac{\lambda_0 +|\tau_1|}{\lambda_1+|\tau_1|}\Big)^a \lambda_1^{s'-2a}\Vert \langle \tau+|\xi|^2\rangle^{\theta'} \mathcal{F}((\lambda_1 + |\partial_t|)^a C_{\gtrsim \lambda_1^2}u_{\lambda_1})\Vert_{L_{\tau,\xi}^2}\\
&\quad \times \lambda_0^{l-a}\Vert(\lambda_0 + |\partial_t|)^a Q_{\ll \lambda_0^2}v_{\lambda_0} \Vert_{L_t^{\infty}L_x^2}\\ 
&\lesssim \lambda_0^{s'-a+b+2\theta'-2-l}\lambda_1^{\frac{d}{2}-s+a-2\theta} \Vert u_{\lambda_1}\Vert_{S_{\lambda_1}^{s,l,\theta}} \Vert v_{\lambda_0}\Vert_{W_{\lambda_0}^{l,s,\theta}}.
\end{align*}
Provided $s-l \leqslant2-2\theta+a-b$ and $l \geqslant-2+\frac{d}{2}+b$, we can sum the above to obtain $(3.5)$.\\

{4. \boldmath $H \times L \rightarrow L$}\\
From the relation $|\tau_0 +|\xi_0|^2| = |\tau_1 +\underbrace{\tau_2-|\xi_2|}_{\ll \lambda_0^2}+|\xi_2|+|\xi_0|^2| \ll \lambda_0^2$, we conclude that $|\tau_1| \sim \lambda_0^2$. Using H{\"o}lder's inequality, we have
\begin{align}
|I(\mathcal{F}(Q_{\ll \lambda_0^2}v_{\lambda_0}),\mathcal{F}(C_{\ll \lambda_0^2}w_{\lambda_0}) ,\mathcal{F}(C_{\sim \lambda_0^2}u_{\lambda_1}))|&\lesssim \Vert C_{\sim \lambda_0^2}u_{\lambda_1}\Vert_{L_{t,x}^2} \Vert P_{\lambda_1}(\overline{Q_{\ll \lambda_0^2}v_{\lambda_0}}C_{\ll \lambda_0^2}w_{\lambda_0})\Vert_{L_{t,x}^2}.
\end{align}
The spatial frequency support of $C_{\sim \lambda_0^2}u_{\lambda_1}$ is localised to frequencies of size $\sim \lambda_1$. Using orthogonality, we can reduce the estimate to the case when the spatial supports of $Q_{\ll \lambda_0^2}v_{\lambda_0}$ and $C_{\ll \lambda_0^2}w_{\lambda_0}$ are also localised to frequencies of size $\sim \lambda_1$. Noting this, decomposing the space-time Fourier supports of  $\overline{Q_{\ll \lambda_0^2}v_{\lambda_0}}$ and $C_{\ll \lambda_0^2}w_{\lambda_0}$ into pieces $L_2$ and $L_0$ respectively and applying the bilinear estimate $(2.13)$, we have
\begin{align*}
(3.8)&\lesssim\Vert C_{\sim \lambda_0^2}u_{\lambda_1}\Vert_{L_{t,x}^2} \sum_{L_0,L_2\ll \lambda_0^2}(L_0L_2)^{\frac{1}{2}}\frac{\lambda_1^{\frac{d-1}{2}}}{\lambda_0^{\frac{1}{2}}}\Vert Q_{L_2}v_{\lambda_0}\Vert_{L_{t,x}^2}\Vert C_{L_0}w_{\lambda_0}\Vert_{L_{t,x}^2}\\
& \lesssim \lambda_0^{s-l-\frac{3}{2}+2(\theta-\theta')-2a} \lambda_1^{\frac{d-1}{2}-s'+2a} \lambda_1^{s'-2a}\Vert \langle \tau+|\xi|^2\rangle^{\theta}\mathcal{F}((\lambda_1+|\partial_t|)^a C_{\sim \lambda_0^2}u_{\lambda_1})\Vert_{L_{\tau,\xi}^2}\\
&\quad \times \lambda_0^{l-a}\Vert \langle\tau-|\xi|\rangle^{\theta}\mathcal{F}((\lambda_0+|\partial_t|)^a Q_{\ll \lambda_0^2}v_{\lambda_0})\Vert_{L_{\tau,\xi}^2} \lambda_0^{-s}\Vert \langle \tau+|\xi|^2\rangle^{1-\theta}\mathcal{F}(C_{\ll \lambda_0^2}w_{\lambda_0})\Vert_{L_{\tau,\xi}^2}\\
&\lesssim \lambda_0^{s-l-\frac{3}{2}+2(\theta-\theta')-2a} \lambda_1^{\frac{d-1}{2}-s'+2a}\Vert u_{\lambda_1}\Vert_{S_{\lambda_1}^{s,l,\theta}} \Vert v_{\lambda_0}\Vert_{W_{\lambda_0}^{l,s,\theta}}\Vert w_{\lambda_0}\Vert_{S_{\lambda_0}^{-s,l,1-\theta}}.
\end{align*}
We require $s-l \leqslant \frac{3}{2}-2(\theta-\theta')+2a$ and $l\geqslant -2+\frac{d}{2}$ for the summability of the above estimate.\\

	{5. \boldmath $L \times H \rightarrow H$}\\
Using the size of the modulation, the product estimate and Bernstein's inequality, we have
\begin{align*}
&\lambda_0^{s'-2a+b}\Vert \langle \tau+|\xi|^2\rangle^{\theta'-1}\mathcal{F}((\lambda_0+|\partial_t|)^a C_{\ll \lambda_1^2}u_{\lambda_1}Q_{\gtrsim \lambda_0^2}v_{\lambda_0})\Vert_{L_{\tau,\xi}^2}\\
&\lesssim \lambda_0^{s'-3a+b+2\theta'-2}\lambda_1^{\frac{d}{2}}\Vert (\lambda_0 +|\partial_t|)^a C_{\ll \lambda_1^2}u_{\lambda_1}\Vert_{L_{t}^{\infty}L_x^2} \Vert (\lambda_0+|\partial_t|)^a Q_{\gtrsim \lambda_0^2}v_{\lambda_0}\Vert_{L_{t,x}^2}\\
&\lesssim \lambda_0^{s'-3a+b+2\theta'-1-\beta}\lambda_1^{\frac{d}{2}-s}(\lambda_0+\lambda_1^2)^a \sup_{|\tau_2|\gtrsim \lambda_0^2}\frac{(\lambda_0+|\tau_2|)^a}{\langle \tau_2-\lambda_0\rangle}\lambda_1^{s}\Vert C_{\ll \lambda_1^2}u_{\lambda_1}\Vert_{L_{t}^{\infty}L_x^2}~\lambda_0^{\beta-1}\Vert \langle \tau-|\xi|\rangle \mathcal{F}(Q_{\gtrsim \lambda_0^2}v_{\lambda_0})\Vert_{L_{\tau,\xi}^2}\\
&\lesssim \lambda_0^{s'-a+b+2\theta'-3-\beta}\lambda_1^{\frac{d}{2}-s}(\lambda_0+\lambda_1^2)^a \Vert u_{\lambda_1}\Vert_{S_{\lambda_1}^{s,l,\theta}} \Vert v_{\lambda_0}\Vert_{W_{\lambda_0}^{l,s,\theta}}\\
&\lesssim \begin{cases}
\lambda_0^{s'+b+2\theta'-3-\beta}\lambda_1^{\frac{d}{2}-s} \Vert u_{\lambda_1}\Vert_{S_{\lambda_1}^{s,l,\theta}} \Vert v_{\lambda_0}\Vert_{W_{\lambda_0}^{l,s,\theta}}, ~~\lambda_0+\lambda_1^2\sim \lambda_0\\
\lambda_0^{s'-a+b+2\theta'-3-\beta}\lambda_1^{\frac{d}{2}-s+2a} \Vert u_{\lambda_1}\Vert_{S_{\lambda_1}^{s,l,\theta}} \Vert v_{\lambda_0}\Vert_{W_{\lambda_0}^{l,s,\theta}}, ~~\lambda_0+\lambda_1^2\sim \lambda_1^2.
\end{cases}
\end{align*}
Both the cases above can be summed up provided $s-\beta\leqslant 3-2\theta-b$ and $\beta\geqslant -3+\frac{d}{2}+2\theta+a+b$.\\

{6. \boldmath $L \times H \rightarrow L$}\\
If $|\tau_2|\gg \lambda_0^2$, the output will have a high modulation. Hence it suffices to consider $|\tau_2 |\sim \lambda_0^2$. Then, we have
\begin{align}
|I(\mathcal{F}(Q_{\sim \lambda_0^2}v_{\lambda_0}),\mathcal{F}(C_{\ll \lambda_0^2}w_{\lambda_0}),\mathcal{F}(C_{\ll \lambda_1^2}u_{\lambda_1}) )|&\lesssim \Vert Q_{\sim \lambda_0^2}v_{\lambda_0}\Vert_{L_{t,x}^2} \Vert C_{\ll \lambda_1^2}u_{\lambda_1}\overline{C_{\ll \lambda_0^2}w_{\lambda_0}}\Vert_{L_{t,x}^2}.
\end{align}
We decompose the space-time Fourier supports of $C_{\ll \lambda_1^2}u_{\lambda_1}$ and $\overline{C_{\ll \lambda_0^2}w_{\lambda_0}}$ into pieces $L_1$ and $L_0$ respectively, and apply the bilinear estimate $(2.12)$ to obtain
\begin{align*}
(3.9)&\lesssim  \Vert Q_{\sim \lambda_0^2}v_{\lambda_0}\Vert_{L_{t,x}^2} \sum_{L_0\ll \lambda_0^2,L_1\ll \lambda_1^2}(L_0L_1)^{\frac{1}{2}}\frac{\lambda_1^{\frac{d-1}{2}}}{\lambda_0^{\frac{1}{2}}}\Vert C_{L_1}u_{\lambda_1}\Vert_{L_{t,x}^2}\Vert C_{L_0}w_{\lambda_0}\Vert_{L_{t,x}^2}\\
&\lesssim \lambda_0^{-\beta-\frac{5}{2}+s+2\theta}\lambda_1^{\frac{d-1}{2}-s}~\lambda_1^s \Vert \langle \tau+|\xi|^2\rangle^{\theta}\mathcal{F}(C_{\ll \lambda_1^2}u_{\lambda_1})\Vert_{L_{\tau,\xi}^2} \lambda_0^{\beta-1}\Vert \langle \tau-|\xi|\rangle \mathcal{F}(Q_{\sim \lambda_0^2}v_{\lambda_0})\Vert_{L_{\tau,\xi}^2}\\
&\quad \times \lambda_0^{-s}\Vert \langle \tau+|\xi|^2\rangle^{1-\theta} \mathcal{F}(C_{\ll \lambda_0^2}w_{\lambda_0})\Vert_{L_{\tau,\xi}^2}\\
&\lesssim \lambda_0^{-\beta-\frac{5}{2}+s+2\theta}\lambda_1^{\frac{d-1}{2}-s}\Vert u_{\lambda_1}\Vert_{S_{\lambda_1}^{s,l,\theta}} \Vert v_{\lambda_0}\Vert_{W_{\lambda_0}^{l,s,\theta}}\Vert w_{\lambda_0}\Vert_{S_{\lambda_0}^{-s,l,1-\theta}}.
\end{align*}
We require $s-\beta\leqslant \frac{5}{2}-2\theta$ and $\beta\geqslant-3+\frac{d}{2}+2\theta$ to sum the above estimate and obtain $(3.4)$.\\
For $\lambda_1\sim \lambda_0 \sim 1, d=1$, we have
\begin{align*}
|I(\mathcal{F}(Q_{\sim \lambda_0^2}v_{\lambda_0}),\mathcal{F}(C_{\ll \lambda_0^2}w_{\lambda_0}),\mathcal{F}(C_{\ll \lambda_1^2}u_{\lambda_1}) )|&\lesssim \Vert Q_{\sim \lambda_0^2}v_{\lambda_0}\Vert_{L_{t,x}^2} \Vert C_{\ll \lambda_1^2}u_{\lambda_1}\Vert_{L_t^{\infty}L_x^2} \Vert C_{\ll \lambda_0^2}w_{\lambda_0}\Vert_{L_{t,x}^2}\\
&\lesssim \Vert v_{\lambda_0}\Vert_{W^{l,s,\theta}_{\lambda_0}} \Vert u_{\lambda_1}\Vert_{S^{s,l,\theta}_{\lambda_1}} \Vert w_{\lambda_0}\Vert_{S^{-s,l,1-\theta}_{\lambda_0}}.
\end{align*}

	{7. \boldmath $L \times L \rightarrow H$}\\
{\boldmath	$d=3:$} Using the size of the modulation, the product estimate and the endpoint Strichartz space $L_t^2L_x^6$, we have
\begin{align*}
&\lambda_0^{s'-2a+b}\Vert \langle\tau + |\xi|^2\rangle^{\theta' -1}\mathcal{F}((\lambda_0 + |\partial_t|)^a C_{\gtrsim \lambda_0^2}(C_{\ll \lambda_1^2}u_{\lambda_1}Q_{\ll \lambda_0^2}v_{\lambda_0})) \Vert_{L_{\tau,\xi}^2}\\
&\lesssim \lambda_0^{s'-3a+b+2\theta'-2}\lambda_1^{\frac{1}{2}}\Vert (\lambda_0+|\partial_t|)^a C_{\ll \lambda_1^2}u_{\lambda_1}\Vert_{L_t^2L_x^6}\Vert (\lambda_0+|\partial_t|)^a Q_{\ll \lambda_0^2}v_{\lambda_0}\Vert_{L_t^{\infty}L_x^2}\\
&\lesssim \begin{cases}
\lambda_0^{s'-2a+b+2\theta'-2-l}\lambda_1^{\frac{1}{2}-s+2a}\Vert u_{\lambda_1}\Vert_{S_{\lambda_1}^{s,l,\theta}} \Vert v_{\lambda_0}\Vert_{W_{\lambda_0}^{l,s,\theta}}, ~~~\lambda_0 +\lambda_1^2 \sim \lambda_1^2\\
\lambda_0^{s'-a+b+2\theta'-2-l}\lambda_1^{\frac{1}{2}-s} \Vert u_{\lambda_1}\Vert_{S_{\lambda_1}^{s,l,\theta}}\Vert v_{\lambda_0}\Vert_{W_{\lambda_0}^{l,s,\theta}}, ~~~\lambda_0 +\lambda_1^2 \sim \lambda_0.
\end{cases}
\end{align*}
The constraints $s-l\leqslant 2-2\theta+a-b$ and $l \geqslant -\frac{3}{2}+2\theta+b$ are required to sum the above cases.\\

\noindent	{\boldmath	$d=2:$} We consider two subcases for the size of the wave modulation:\\
\textbf{a.}	{\boldmath $\langle \tau-|\xi|\rangle\ll \lambda_0$}:
For $\lambda_0 \ll |\tau_2|\ll \lambda_0^2$, the wave has a modulation of size $\gg \lambda_0$, so it suffices to consider $|\tau_2|\sim \lambda_0$. Using the size of the output modulation and the bilinear estimate $(2.13)$, we have
\begin{align*}
&\lambda_0^{s'-2a+b}\Vert \langle\tau + |\xi|^2\rangle^{\theta' -1}\mathcal{F}((\lambda_0 + |\partial_t|)^a C_{\gtrsim \lambda_0^2} (C_{\ll \lambda_1^2}u_{\lambda_1}P_{\sim \lambda_0}^{(t)}Q_{\ll \lambda_0^2}v_{\lambda_0})) \Vert_{L_{\tau,\xi}^2}\\
&\lesssim \lambda_0^{s'-2a+b+2\theta'-2}(\lambda_0+\lambda_1^2)^a\sum_{\substack{L_1\ll \lambda_1^2\\L_2\ll \lambda_0}}(L_1L_2)^{\frac{1}{2}}\frac{\lambda_1^{\frac{1}{2}}}{\lambda_1^{\frac{1}{2}}}\Vert C_{L_1}u_{\lambda_1}\Vert_{L_{t,x}^2}\Vert Q_{L_2}P_{\sim \lambda_0}^{(t)}v_{\lambda_0}\Vert_{L_{t,x}^2}\\
&\lesssim	\begin{cases}
\lambda_0^{s'-a+b+2\theta'-2-l}\lambda_1^{-s}  \Vert u_{\lambda_1}\Vert_{S^{s,l,\theta}_{\lambda_1}}\Vert v_{\lambda_0}\Vert_{W^{l,s,\theta}_{\lambda_0}}, ~~~\lambda_0+\lambda_1^2 \sim \lambda_0\\
\lambda_0^{s'-2a+b+2\theta'-2-l}\lambda_1^{-s+2a} \Vert u_{\lambda_1}\Vert_{S^{s,l,\theta}_{\lambda_1}}\Vert v_{\lambda_0}\Vert_{W^{l,s,\theta}_{\lambda_0}}, ~~~\lambda_0+\lambda_1^2 \sim \lambda_1^2.
\end{cases}
\end{align*}
In the case $1\sim \lambda_1 \ll \lambda_0$, we have
\begin{align*}
&\lambda_0^{s'-2a+b}\Vert \langle\tau + |\xi|^2\rangle^{\theta' -1}\mathcal{F}((\lambda_0 + |\partial_t|)^a C_{\gtrsim \lambda_0^2} (C_{\ll \lambda_1^2}u_{\lambda_1}P_{\sim \lambda_0}^{(t)}Q_{\ll \lambda_0^2}v_{\lambda_0})) \Vert_{L_{\tau,\xi}^2}
\lesssim \lambda_0^{s'-a+b+2\theta'-2-l} \Vert u_{\lambda_1}\Vert_{S^{s,l,\theta}_{\lambda_1}}\Vert v_{\lambda_0}\Vert_{W^{l,s,\theta}_{\lambda_0}}.
\end{align*}

	\textbf{b.}	{\boldmath $\lambda_0 \lesssim \langle \tau-|\xi| \rangle\ll \lambda_0^2$}: We use the size of the modulation, the product estimate and Bernstein's inequality to obtain
\begin{align*}
&\lambda_0^{s'-2a+b}\Vert \langle\tau + |\xi|^2\rangle^{\theta' -1}\mathcal{F}((\lambda_0 + |\partial_t|)^a C_{\gtrsim \lambda_0^2}(C_{\ll \lambda_1^2}u_{\lambda_1}Q_{\lambda_0\lesssim \cdot \ll \lambda_0^2}v_{\lambda_0})) \Vert_{L_{\tau,\xi}^2}\\
&\lesssim \lambda_0^{s'-3a+b+2\theta'-2}\lambda_1 \Vert (\lambda_0+|\partial_t|)^a C_{\ll \lambda_1^2}u_{\lambda_1}\Vert_{L_{t}^{\infty}L_x^2} \Vert (\lambda_0+|\partial_t|)^a  Q_{\lambda_0\lesssim \cdot \ll \lambda_0^2}v_{\lambda_0} \Vert_{L_{t,x}^2}\\
&\lesssim \lambda_0^{s'-2a+b+2\theta'-2-l-\theta}\lambda_1^{1-s}(\lambda_0+\lambda_1^2)^a\lambda_1^{s}\Vert C_{\ll \lambda_1^2}u_{\lambda_1}\Vert_{L_t^{\infty}L_x^2} \lambda_0^{l-a}\Vert \langle \tau-|\xi|\rangle^{\theta}\mathcal{F}((\lambda_0+|\partial_t|)^a Q_{\lambda_0\lesssim \cdot \ll \lambda_0^2}v_{\lambda_0})\Vert_{L_{\tau,\xi}^2}\\
&\lesssim \begin{cases}
\lambda_0^{s'-a+b+2\theta'-2-l-\theta}\lambda_1^{1-s}\Vert u_{\lambda_1}\Vert_{S^{s,l,\theta}_{\lambda_1}}\Vert v_{\lambda_0}\Vert_{W^{l,s,\theta}_{\lambda_0}}, ~~~\lambda_0+\lambda_1^2 \sim \lambda_0\\
\lambda_0^{s'-2a+b+2\theta'-2-l-\theta}\lambda_1^{1-s+2a}\Vert u_{\lambda_1}\Vert_{S^{s,l,\theta}_{\lambda_1}}\Vert v_{\lambda_0}\Vert_{W^{l,s,\theta}_{\lambda_0}}, ~~~\lambda_0+\lambda_1^2 \sim \lambda_1^2.
\end{cases}
\end{align*}
Cases a and b can be summed up provided $s-l \leqslant 2-\theta+a-b$ and $l\geqslant -1+\theta+b$.\\

\noindent			{\boldmath$d=1:$} As for $d=3$, we have
\begin{align*}
&\lambda_0^{s'-2a+b}\Vert \langle\tau + |\xi|^2\rangle^{\theta' -1}\mathcal{F}((\lambda_0 + |\partial_t|)^a C_{\gtrsim \lambda_0^2}(C_{\ll \lambda_1^2}u_{\lambda_1}Q_{\ll \lambda_0^2}v_{\lambda_0}) )\Vert_{L_{\tau,\xi}^2}\\
&\lesssim \lambda_0^{s'-3a+b+2\theta'-2}\lambda_1^{\frac{1}{2}}\Vert (\lambda_0+|\partial_t|)^a C_{\ll \lambda_1^2}u_{\lambda_1}\Vert_{L_t^{\infty}L_x^2}\Vert (\lambda_0+|\partial_t|)^a Q_{\ll \lambda_0^2}v_{\lambda_0}\Vert_{L_{t,x}^2}\\
&\lesssim \lambda_0^{s'-2a+b+2\theta'-2-l}\lambda_1^{\frac{1}{2}-s} (\lambda_0+\lambda_1^2)^a \lambda_1^s \Vert C_{\ll \lambda_1^2}u_{\lambda_1}\Vert_{L_{t}^{\infty}L_x^2} \lambda_0^{l-a}\Vert \langle \tau-|\xi|\rangle^{\theta}\mathcal{F}((\lambda_0+|\partial_t|)^a Q_{\ll \lambda_0^2}v_{\lambda_0})\Vert_{L_{\tau,\xi}^2}\\
&\lesssim \begin{cases}
\lambda_0^{s'-2a+b+2\theta'-2-l}\lambda_1^{\frac{1}{2}-s+2a}\Vert u_{\lambda_1}\Vert_{S_{\lambda_1}^{s,l,\theta}} \Vert v_{\lambda_0}\Vert_{W_{\lambda_0}^{l,s,\theta}}, ~~~\lambda_0 +\lambda_1^2 \sim \lambda_1^2\\
\lambda_0^{s'-a+b+2\theta'-2-l}\lambda_1^{\frac{1}{2}-s} \Vert u_{\lambda_1}\Vert_{S_{\lambda_1}^{s,l,\theta}}\Vert v_{\lambda_0}\Vert_{W_{\lambda_0}^{l,s,\theta}}, ~~~\lambda_0 +\lambda_1^2 \sim \lambda_0.
\end{cases}
\end{align*}
Provided $s-l\leqslant 2-2\theta+a-b$ and $l \geqslant-\frac{3}{2}+2\theta+b$, we can sum the above estimates to obtain $(3.5)$.\\

From the conclusions made at the end of each subcase, we see that following conditions on the parameters are required to ensure the validity of the estimates $(3.4)$ and $(3.5)$ for $d\leqslant 3$:
\begin{equation}
\bullet~	l\geqslant  -\frac{3}{2}+2\theta+b  \quad \bullet ~
\beta \geqslant -\frac{3}{2}+2\theta+a+b \quad \bullet ~
s-l \leqslant 2-2\theta+a-b \quad \bullet ~
s-\beta \leqslant \min\Big\{3-2\theta-b,\frac{5}{2}-2\theta \Big\}.
\end{equation}\\

\textbf{Case II. High to high interaction  (\boldmath $\lambda_1 \sim \lambda_0$)}\\
For $\mu\lesssim \lambda_1$, we decompose $u_{\lambda_1}$ and $v_{\mu}$ as follows:
\begin{equation*}
u_{\lambda_1} = C_{\ll \lambda_1^2}u_{\lambda_1} + C_{\gtrsim \lambda_1^2}u_{\lambda_1}, \quad v_{\mu} = Q_{\ll \mu^2}v_{\mu} + Q_{\gtrsim \mu^2}v_{\mu}.
\end{equation*}
The following interactions can be distinguished on the basis of the size of the modulation:\\

{1. \boldmath $H\times H \rightarrow H$}\\
We consider two cases for the temporal frequencies:\\
\textbf{a.} $|\tau_1|\lesssim \lambda_0^2$ or $|\tau_2|\lesssim \lambda_0^2$\\
Since at least one of the temporal frequencies $\tau_S$ or $\tau_W$ has size $\lesssim \lambda_0^2$, we can apply Bernstein's inequality in the time variable. We use the size of the modulation, the product estimate and Bernstein's inequality in space and time variables to obtain
\begin{align*}
&\lambda_0^{s'-2a+b}\Vert \langle \tau+|\xi|^2\rangle^{\theta'-1}\mathcal{F}((\lambda_0+|\partial_t|)^a C_{\gtrsim \lambda_0^2}(C_{\gtrsim \lambda_1^2}u_{\lambda_1}Q_{\gtrsim \mu^2}v_{\mu}))\Vert_{L_{\tau,\xi}^2}\\
&\lesssim \lambda_0^{s'-3a+b+2\theta'-1}\mu^{\frac{d}{2}} \Vert (\lambda_1 +|\partial_t|)^a C_{\gtrsim \lambda_1^2}u_{\lambda_1}\Vert_{L_{t,x}^2} \Vert (\lambda_0 +|\partial_t|)^a Q_{\gtrsim \mu^2} v_{\mu}\Vert_{L_{t,x}^2}\\
&\lesssim \lambda_0^{-a+b-1}\mu^{\frac{d}{2}+1-\beta}\sup_{|\tau_2|\gtrsim \mu^2}\frac{(\lambda_0+|\tau_2|)^a}{\langle \tau_2-\mu\rangle}\lambda_1^{s'-2a}\Vert \langle \tau+|\xi|^2\rangle^{\theta'} \mathcal{F}((\lambda_1+|\partial_t|)^a C_{\gtrsim \lambda_1^2}u_{\lambda_1})\Vert_{L_{\tau,\xi}^2}\\
&\quad \times \mu^{\beta-1}\Vert \langle \tau-|\xi|\rangle \mathcal{F}(Q_{\gtrsim \mu^2}v_{\mu})\Vert_{L_{\tau,\xi}^2}\\
&\lesssim \begin{cases}
\lambda_0^{b-1}\mu^{\frac{d}{2}-1-\beta}  \Vert u_{\lambda_1}\Vert_{S_{\lambda_1}^{s,l,\theta}} \Vert v_{\mu}\Vert_{W_{\mu}^{l,s,\theta}}, ~~|\tau_2|\lesssim \lambda_0\\
\lambda_0^{-a+b-1}\mu^{\frac{d}{2}-\beta+2a-1}  \Vert u_{\lambda_1}\Vert_{S_{\lambda_1}^{s,l,\theta}} \Vert v_{\mu}\Vert_{W_{\mu}^{l,s,\theta}}, ~~~\lambda_0\lesssim |\tau_2|.
\end{cases}
\end{align*}
Since $b< 1$ and $-a+b <1$, we require $\beta\geqslant-2+\frac{d}{2}+a+b$ to sum the above estimates.\\

\noindent
\textbf{b.} $|\tau_1|, |\tau_2|\gg \lambda_0^2$\\
We employ the same steps as in the previous case but cannot use Bernstein's inequality wrt time. Instead, we make use of the high modulation of the wave.
\begin{align*}
&\lambda_0^{s'-2a+b}\Vert \langle \tau+|\xi|^2\rangle^{\theta'-1}\mathcal{F}((\lambda_0+|\partial_t|)^a C_{\gtrsim \lambda_0^2}(C_{\gg \lambda_0^2}u_{\lambda_1}Q_{\gg \lambda_0^2}v_{\mu}))\Vert_{L_{\tau,\xi}^2}\\
&\lesssim \lambda_0^{s'-3a+b+2\theta'-2}\mu^{\frac{d}{2}} \Vert (\lambda_1 +|\partial_t|)^a C_{\gg \lambda_1^2}u_{\lambda_1}\Vert_{L_{t}^{\infty}L_x^2} \Vert (\lambda_0 +|\partial_t|)^a Q_{\gg \lambda_0^2} v_{\mu}\Vert_{L_{t,x}^2}\\
&\lesssim \lambda_0^{-a+b+2\theta-2}\mu^{\frac{d}{2}-\beta+1}\sup_{|\tau_2|\gg \lambda_0^2}\frac{(\lambda_0+|\tau_2|)^a}{\langle \tau_2 - \lambda_0\rangle}\lambda_1^{s'-2a}\Vert (\lambda_1+|\partial_t|)^a C_{\gg \lambda_1^2}u_{\lambda_1}\Vert_{L_{t}^{\infty}L_x^2} \mu^{\beta-1}\Vert \langle \tau-|\xi|\rangle \mathcal{F}(Q_{\gg \lambda_0^2}v_{\mu})\Vert_{L_{\tau,\xi}^2}\\
&\lesssim \lambda_0^{a+b+2\theta-4}\mu^{\frac{d}{2}-\beta+1}
\Vert u_{\lambda_1}\Vert_{S_{\lambda_1}^{s,l,\theta}} \Vert v_{\mu}\Vert_{W_{\mu}^{l,s,\theta}}.
\end{align*}
We can sum the above up provided $\beta\geqslant-3+\frac{d}{2}+2\theta+a+b$.\\

	{2. \boldmath $H\times H \rightarrow L$}\\
We consider the expression $I$ and apply H{\"o}lder's and Bernstein's inequality to obtain
\begin{align*}
&|I( \mathcal{F}(Q_{\gtrsim \mu^2}v_{\mu}), \mathcal{F}(C_{\ll \lambda_0^2}w_{\lambda_0}),\mathcal{F}(C_{\gtrsim \lambda_1^2}u_{\lambda_1}))|\\
& \lesssim \mu^{\frac{d}{2}}\Vert C_{\gtrsim \lambda_1^2}u_{\lambda_1} \Vert_{L_{t,x}^2} \Vert Q_{\gtrsim \mu^2}v_{\mu} \Vert_{L_{t,x}^2} \Vert C_{\ll \lambda_0^2}w_{\lambda_0}\Vert_{L_{t}^{\infty}L_x^2}\\
&\lesssim \mu^{\frac{d}{2}-1-\beta}\lambda_1^{a-1} \lambda_1^{s'-2a}\Vert \langle \tau+|\xi|^2\rangle^{\theta'}\mathcal{F}((\lambda_1 +|\partial_t|)^a C_{\gtrsim \lambda_1^2}u_{\lambda_1})\Vert_{L_{\tau,\xi}^2} ~\mu^{\beta-1}\Vert\langle \tau -|\xi|\rangle \mathcal{F}(Q_{\gtrsim \mu^2}v_{\mu})\Vert_{L_{\tau,\xi}^2}\\
&\quad \times \lambda_0^{-s}\Vert C_{\ll \lambda_0^2}w_{\lambda_0}\Vert_{L_t^p L_x^2}\\
&\lesssim \lambda_1^{a-1}\mu^{\frac{d}{2}-1-\beta} \Vert u_{\lambda_1}\Vert_{S_{\lambda_1}^{s,l,\theta}} \Vert v_{\mu}\Vert_{W_{\mu}^{l,s,\theta}}\Vert w_{\lambda_0}\Vert_{S_{\lambda_0}^{-s,l,1-\theta}}.
\end{align*}
To obtain the last second inequality, we use Bernstein's inequality for $\frac{1}{p} = \theta-\frac{1}{2}$ and the last inequality follows from $(2.7)$.\\
We require $\beta\geqslant-2+\frac{d}{2}+a$ to sum this estimate and obtain $(3.4)$.\\

{3. \boldmath $H\times L \rightarrow H$}\\
Using the size of the modulation weight, the product estimate and Bernstein's inequality, we get
\begin{align*}
&\lambda_0^{s'-2a+b}\Vert \langle \tau +|\xi|^2\rangle^{\theta'-1}\mathcal{F}((\lambda_0+|\partial_t|)^a C_{\gtrsim \lambda_0^2}(C_{\gtrsim \lambda_1^2}u_{\lambda_1}Q_{\ll \mu^2}v_{\mu}))\Vert_{L_{\tau,\xi}^2}\\
&\lesssim \lambda_0^{s'-3a+b+2\theta'-2}\mu^{\frac{d}{2}}\Vert (\lambda_1+|\partial_t|)^a C_{\gtrsim \lambda_1^2}u_{\lambda_1} \Vert_{L_{t,x}^2} \Vert (\lambda_0+|\partial_t|)^a Q_{\ll \mu^2}v_{\mu}\Vert_{L_{t}^{\infty}L_x^2}\\
&\lesssim \lambda_0^{-a+b-2}\mu^{\frac{d}{2}-l+a}\sup_{|\tau_2|\ll \mu^2}\Big(\frac{\lambda_0 +|\tau_2|}{\mu+|\tau_2|}\Big)^a \lambda_1^{s'-2a}\Vert \langle \tau+|\xi|^2\rangle^{\theta'}\mathcal{F}((\lambda_1+|\partial_t|)^a C_{\gtrsim \lambda_1^2}u_{\lambda_1})\Vert_{L_{\tau,\xi}^2}\\
&\quad \times \mu^{l-a}\Vert (\mu +|\partial_t|)^a Q_{\ll \mu^2}v_{\mu}\Vert_{L_{t}^{\infty}L_x^2}\\
&\lesssim \begin{cases} 
\lambda_0^{b-2}\mu^{\frac{d}{2}-l} \Vert u_{\lambda_1}\Vert_{S_{\lambda_1}^{s,l,\theta}} \Vert v_{\mu}\Vert_{W_{\mu}^{l,s,\theta}}, ~~|\tau_2|\lesssim \lambda_0,\\
\lambda_0^{-a+b-2}\mu^{\frac{d}{2}-l+a} \Vert u_{\lambda_1}\Vert_{S_{\lambda_1}^{s,l,\theta}} \Vert v_{\mu}\Vert_{W_{\mu}^{l,s,\theta}}, ~~\lambda_0 \lesssim |\tau_2|
\end{cases} \lesssim \lambda_0^{b-2}\mu^{\frac{d}{2}-l} \Vert u_{\lambda_1}\Vert_{S_{\lambda_1}^{s,l,\theta}} \Vert v_{\mu}\Vert_{W_{\mu}^{l,s,\theta}}.
\end{align*}\\
The constraint $l \geqslant -2+\frac{d}{2}+b$ is required to sum the above up.\\

{4. \boldmath $L\times H \rightarrow H$}\\
Using the size of the modulation, the product estimate and Bernstein's inequality, we have
\begin{align*}
&\lambda_0^{s'-2a+b}\Vert \langle \tau+|\xi|^2\rangle^{\theta'-1}\mathcal{F}((\lambda_0+|\partial_t|)^a C_{\gtrsim \lambda_0^2}(C_{\ll \lambda_1^2}u_{\lambda_1}Q_{\gtrsim \mu^2}v_{\mu}))\Vert_{L_{\tau,\xi}^2}\\
&\lesssim \lambda_0^{s'-3a+b+2\theta'-2}\mu^{\frac{d}{2}}\Vert (\lambda_1+|\partial_t|)^a C_{\ll \lambda_1^2}u_{\lambda_1}\Vert_{L_{t}^{\infty}L_x^2} \Vert (\lambda_1+|\partial_t|)^a  Q_{\gtrsim \mu^2}v_{\mu}\Vert_{L_{t,x}^2}\\
&\lesssim \lambda_0^{-a+b+2\theta-2}\mu^{\frac{d}{2}-\beta+1}\sup_{|\tau_2|\gtrsim \mu^2}\frac{(\lambda_0+|\tau_2|)^a}{\langle \mu -\tau_2 \rangle}~\lambda_1^s \Vert C_{\ll \lambda_1^2}u_{\lambda_1}\Vert_{L_{t}^{\infty}L_x^2}~ \mu^{\beta-1}\Vert \langle \tau-|\xi|\rangle \mathcal{F}(Q_{\gtrsim \mu^2}v_{\mu})\Vert_{L_{\tau,\xi}^2}\\
&\lesssim \begin{cases}
\lambda_0^{b+2\theta-2}\mu^{\frac{d}{2}-\beta-1}\Vert u_{\lambda_1}\Vert_{S^{s,l,\theta}_{\lambda_1}} \Vert v_{\mu}\Vert_{W^{l,s,\theta}_{\mu}}, ~~~|\tau_2|\lesssim \lambda_0\\
\lambda_0^{-a+b+2\theta-2}\mu^{\frac{d}{2}-\beta-1+2a}\Vert u_{\lambda_1}\Vert_{S^{s,l,\theta}_{\lambda_1}} \Vert v_{\mu}\Vert_{W^{l,s,\theta}_{\mu}}, ~~~\lambda_0\lesssim |\tau_2|.
\end{cases} 
\end{align*}
With $b\leqslant 2-2\theta$ and $\beta\geqslant-3+\frac{d}{2}+2\theta+a+b$, we can sum this estimate to obtain $(3.5)$.\\

	{5. \boldmath $L\times H \rightarrow L$}\\
Since $u_{\lambda_1}$ has temporal frequencies of size $\sim \lambda_1^2$, we observe that the temporal frequencies $\tau_2$ are such that $\mu^2 \lesssim |\tau_2|\lesssim \lambda_0^2$.
\noindent{\boldmath $d=2,3:$} We have the standard decomposition:
\begin{align}\nonumber
|I(\mathcal{F}(Q_{\mu^2\lesssim \cdot \lesssim \lambda_0^2}v_{\mu}), \mathcal{F}(C_{\ll \lambda_0^2}w_{\lambda_0}),\mathcal{F}(C_{\ll \lambda_1^2}u_{\lambda_1}))|
&\lesssim \Big |I \Big( \sum_{\mu^2\lesssim L_2 \lesssim \lambda_0^2}\mathcal{F}(Q_{L_2} v_{\mu}), \sum_{L_0 \ll \lambda_0^2} \mathcal{F}(C_{L_0}w_{\lambda_0}),\sum_{L_1 \ll \lambda_1^2}\mathcal{F}(C_{L_1}u_{\lambda_1})\Big)\Big|\\
&\lesssim \sum_{\substack {L_1,L_0\ll \lambda_0^2,\\ \mu^2\lesssim L_2 \lesssim \lambda_0^2}}\Big|I(\mathcal{F}(Q_{L_2}v_{\mu}), \mathcal{F}(C_{L_0}w_{\lambda_0}),\mathcal{F}(C_{L_1}u_{\lambda_1}))\Big|.
\end{align}
We apply lemma $2.4$ to the above to obtain
\begin{align*}
(3.11)&\lesssim \sum_{\substack {L_1,L_0\ll \lambda_0^2,\\ \mu^2\lesssim L_2 \lesssim \lambda_0^2}}\lambda_1^{-\frac{1}{2}}\log{\lambda_1}(L_0L_1L_2)^{\frac{1}{2}}\Vert C_{L_1}u_{\lambda_1}\Vert_{L_{t,x}^2} \Vert Q_{L_2}v_{\mu} \Vert_{L_{t,x}^2} \Vert C_{L_0}w_{\lambda_0} \Vert_{L_{t,x}^2}\\
&\lesssim \mu^{-\beta}\lambda_0^{2\theta-\frac{3}{2}} \log{\lambda_1} \log{\mu}~\lambda_1^s \Vert \langle\tau+|\xi|^2 \rangle^{\theta} \mathcal{F}(C_{\ll \lambda_1^2}u_{\lambda_1}) \Vert_{L_{\tau,\xi}^2}  \\
&\quad \times\mu^{\beta}\Vert \langle \tau-|\xi|\rangle  \mathcal{F}(Q_{\mu^2 \lesssim \cdot \lesssim \lambda_0^2})v_{\mu} \Vert_{L_{\tau,\xi}^2} \lambda_0^{-s}\Vert \langle\tau+|\xi|^2 \rangle^{1-\theta}\mathcal{F}(C_{\ll \lambda_0^2} w_{\lambda_0})\Vert_{L_{\tau,\xi}^2}\\
&\lesssim\mu^{-\beta}\lambda_0^{2\theta-\frac{3}{2}} \log{\lambda_1} \log{\mu} \Vert u_{\lambda_1}\Vert_{S_{\lambda_1}^{s,l,\theta}} \Vert v_{\mu}\Vert_{W_{\mu}^{l,s,\theta}}\Vert  w_{\lambda_0}\Vert_{S_{\lambda_0}^{-s,l,1-\theta}}.
\end{align*}

\noindent{\boldmath $d=1:$} We apply H\"older's inequality and bilinear Strichartz estimate for wave-Schr\"odinger interaction $(2.13)$ as follows:
\begin{align*}
|I&(\mathcal{F}(Q_{\mu^2\lesssim \cdot \lesssim \lambda_0^2}v_{\mu}), \mathcal{F}(C_{\ll \lambda_0^2}w_{\lambda_0}),\mathcal{F}(C_{\ll \lambda_1^2}u_{\lambda_1}))|\\
&\lesssim \Vert C_{\ll \lambda_1^2}u_{\lambda_1}\Vert_{L_{t,x}^2} \Vert   \overline{Q_{\mu^2\lesssim \cdot \lesssim \lambda_0^2}v_{\mu}} C_{\ll \lambda_0^2}w_{\lambda_0}\Vert_{L_{t,x}^2}\\
&\lesssim \Vert C_{\ll \lambda_1^2}u_{\lambda_1}\Vert_{L_{t,x}^2} \sum_{\substack{L_0\ll \lambda_0^2\\\mu^2\lesssim L_2\lesssim \lambda_0^2}}(L_0L_2)^{\frac{1}{2}}\frac{1}{\lambda_0^{\frac{1}{2}}}\Vert Q_{L_2}v_{\mu}\Vert_{L_{t,x}^2}\Vert C_{L_0}w_{\lambda_0}\Vert_{L_{t,x}^2}\\
&\lesssim \lambda_0^{-\frac{3}{2}+2\theta}\log{\mu}~\mu^{-\beta}\lambda_1^{s}\Vert \langle \tau+|\xi|^2\rangle^{\theta}C_{\ll \lambda_1^2}u_{\lambda_1}\Vert_{L_{t,x}^2} \mu^{\beta-1}\Vert \langle \tau-|\xi|\rangle \mathcal{F}(Q_{\mu^2\lesssim \cdot \lesssim \lambda_0^2}v_{\mu}) \Vert_{L_{\tau,\xi}^2}\\
&\quad \times \lambda_0^{-s}\Vert \langle \tau+|\xi|^2\rangle^{1-\theta}\mathcal{F}(C_{\ll \lambda_0^2}w_{\lambda_0})\Vert_{L_{\tau,\xi}^2}\\
&\lesssim \lambda_0^{-\frac{3}{2}+2\theta}\log{\mu}~\mu^{-\beta}\Vert u_{\lambda_1}\Vert_{S^{s,l,\theta}_{\lambda_1}} \Vert v_{\mu}\Vert_{W^{l,s,\theta}_{\mu}}\Vert w_{\lambda_0}\Vert_{S^{-s,l,1-\theta}_{\lambda_0}}.
\end{align*}
The constraint $\beta>-\frac{3}{2}+2\theta$ enables us to sum the above estimate for $d\leqslant 3$.\\	
For $\mu\sim \lambda_1\sim 1$ in $d\leqslant 3$, we have
\begin{align*}
|I(\mathcal{F}(Q_{\mu^2\lesssim \cdot \lesssim \lambda_0^2}v_{\mu}), \mathcal{F}(C_{\ll \lambda_0^2}w_{\lambda_0}),\mathcal{F}(C_{\ll \lambda_1^2}u_{\lambda_1}))|& \lesssim \Vert C_{\ll \lambda_1^2}u_{\lambda_1}\Vert_{L_{t}^{\infty}L_x^2} \Vert Q_{\mu^2 \lesssim \cdot \lesssim \lambda_0^2}v_{\mu}\Vert_{L_{t,x}^2} \Vert C_{\ll \lambda_0^2}w_{\lambda_0}\Vert_{L_{t,x}^2}\\
&\lesssim \Vert u_{\lambda_1}\Vert_{S^{s,l,\theta}_{\lambda_1}} \Vert v_{\mu}\Vert_{W^{l,s,\theta}_{\mu}}\Vert w_{\lambda_0}\Vert_{S^{-s,l,1-\theta}_{\lambda_0}}.
\end{align*}
\begin{remark}
	In case the wave temporal frequencies are of size $\sim \lambda_0^2$, a simpler argument applies.
\end{remark}

	{6. \boldmath $L\times L \rightarrow L$}\\
{\boldmath $d=2,3:$} We consider the expression $I$, decompose the space-time Fourier supports of $C_{\ll \lambda_1^2}u_{\lambda_1}, Q_{\ll \lambda_0^2}v_{\mu}$ and $ C_{\ll \lambda_0^2}w_{\lambda_0}$ into pieces of size $L_1, L_2$ and $L_0$, respectively to obtain
\begin{align}
|I( \mathcal{F}(Q_{\ll \lambda_0^2}v_{\mu}), \mathcal{F}(C_{\ll \lambda_0^2}w_{\lambda_0}),\mathcal{F}(C_{\ll \lambda_1^2}u_{\lambda_1}))|
%&\lesssim \Big |I \Big(\sum_{L_1 \ll \lambda_1^2}C_{L_1}u_{\lambda_1}, \sum_{L_2 \ll \lambda_0^2}Q_{L_2} v_{\mu}, \sum_{L_0 \ll \lambda_0^2} C_{L_0}w_{\lambda_0}\Big)\Big|\\
&\lesssim \sum_{L_1,L_2,L_0\ll \lambda_0^2}\Big|I( \mathcal{F}(Q_{L_2}v_{\mu}), \mathcal{F}(C_{L_0}w_{\lambda_0}),\mathcal{F}(C_{L_1}u_{\lambda_1}))\Big|.
\end{align}
From lemma $2.4$, we have
\begin{align*}
(3.12)&\lesssim \sum_{L_1,L_2,L_0\ll \lambda_0^2}\lambda_1^{-\frac{1}{2}}\log{\lambda_1}(L_0L_1L_2)^{\frac{1}{2}}\Vert C_{L_1}u_{\lambda_1}\Vert_{L_{t,x}^2} \Vert Q_{L_2}v_{\mu} \Vert_{L_{t,x}^2} \Vert C_{L_0}w_{\lambda_0} \Vert_{L_{t,x}^2}\\
&\lesssim \mu^{-l}\lambda_0^{2\theta-\frac{3}{2}} \log{\lambda_1}~\lambda_1^s \Vert \langle\tau+|\xi|^2 \rangle^{\theta} \mathcal{F}(C_{\ll \lambda_1^2}u_{\lambda_1}) \Vert_{L_{\tau,\xi}^2}  \\
&\quad \times\mu^{l-a}\Vert \langle \tau-|\xi|\rangle^{\theta}(\mu+|\partial_t|)^a  \mathcal{F}(Q_{\ll \lambda_0^2}v_{\mu}) \Vert_{L_{\tau,\xi}^2} \lambda_0^{-s}\Vert \langle\tau+|\xi|^2 \rangle^{1-\theta} \mathcal{F}(C_{\ll \lambda_0^2} w_{\lambda_0})\Vert_{L_{\tau,\xi}^2}\\
&\lesssim\mu^{-l}\lambda_0^{2\theta-\frac{3}{2}} \log{\lambda_1} \Vert u_{\lambda_1}\Vert_{S_{\lambda_1}^{s,l,\theta}} \Vert v_{\mu}\Vert_{W_{\mu}^{l,s,\theta}}\Vert  w_{\lambda_0}\Vert_{S_{\lambda_0}^{-s,l,1-\theta}}.
\end{align*}

\noindent{\boldmath $d=1$:} Using Cauchy Schwarz inequality, we have
\begin{align}
|I( \mathcal{F}(Q_{\ll \lambda_0^2}v_{\mu}), \mathcal{F}(C_{\ll \lambda_0^2}w_{\lambda_0}),\mathcal{F}(C_{\ll \lambda_1^2}u_{\lambda_1}))|\lesssim \Vert C_{\ll \lambda_1^2}u_{\lambda_1}\Vert_{L_{t,x}^2} \Vert \overline{Q_{\ll \lambda_0^2}v_{\mu}} C_{\ll \lambda_0^2}w_{\lambda_0}\Vert_{L_{t,x}^2}.
\end{align}
We decompose the space-time Fourier support of $ \overline{Q_{\ll \lambda_0^2}v_{\mu}}$ and $C_{\ll \lambda_0^2}w_{\lambda_0}$ into pieces of size $L_2$ and $L_0$ respectively and apply the bilinear estimate $(2.13)$ for wave-Schr{\"o}dinger interaction:
\begin{align*}
(3.13)&\lesssim  \Vert C_{\ll \lambda_1^2}u_{\lambda_1}\Vert_{L_{t,x}^2} \sum_{L_0,L_2\ll \lambda_0^2}(L_0L_2)^{\frac{1}{2}}\frac{1}{\lambda_0^{\frac{1}{2}}}\Vert Q_{L_2}v_{\mu}\Vert_{L_{t,x}^2}\Vert C_{L_0}w_{\lambda_0}\Vert_{L_{t,x}^2}\\
&\lesssim \lambda_0^{-\frac{3}{2}+2\theta}\mu^{-l}	~\lambda_1^s \Vert \langle \tau+|\xi|^2\rangle^{\theta}\mathcal{F}(C_{\ll \lambda_1^2}u_{\lambda_1})\Vert_{L_{\tau,\xi}^2} \mu^{l-a}\Vert \langle \tau-|\xi|\rangle^{\theta}\mathcal{F}((\lambda_0+|\partial_t|)^a Q_{\ll \lambda_0^2}v_{\mu})\Vert_{L_{\tau,\xi}^2}\\
&\quad \times \lambda_0^{-s}\Vert \langle \tau+|\xi|^2\rangle^{1-\theta} \mathcal{F}(C_{\ll \lambda_0^2}w_{\lambda_0})\Vert_{L_{\tau,\xi}^2}\\
&\lesssim \lambda_0^{-\frac{3}{2}+2\theta}\mu^{-l} \Vert u_{\lambda_1}\Vert_{S_{\lambda_1}^{s,l,\theta}} \Vert v_{\mu}\Vert_{W_{\mu}^{l,s,\theta}}\Vert w_{\lambda_0}\Vert_{S_{\lambda_0}^{-s,l,1-\theta}}.
\end{align*}
Provided $l >-\frac{3}{2}+2\theta$, we can sum the estimate for $d\leqslant 3$ to obtain $(3.4)$.\\
For $\mu\sim \lambda_0\sim 1$, the estimate $(3.4)$ holds with trivial modification.\\
We conclude that we require
\begin{equation}
\bullet ~l>\max\Big\{-\frac{1}{2}+b, -\frac{3}{2}+2\theta\Big\} \quad 
\bullet ~\beta >-\frac{3}{2}+2\theta+a+b
\end{equation} 
to obtain the estimates $(3.4)$ and $(3.5)$ when $\lambda_0\sim \lambda_1$ for $d\leqslant 3$.\\

\textbf{Case III. Low to high interaction  (\boldmath $\lambda_0 \ll \lambda_1$)}\\
We decompose $u_{\lambda_1}$ and $v_{\lambda_1}$ as follows:
\begin{equation*}
u_{\lambda_1} = C_{\ll \lambda_1^2}u_{\lambda_1} + C_{\gtrsim \lambda_1^2}u_{\lambda_1}, \quad v_{\lambda_1} = Q_{\ll \lambda_1^2}v_{\lambda{_1}} + Q_{\gtrsim \lambda_1^2}v_{\lambda_1}
\end{equation*}

{1. \boldmath $H \times H \rightarrow H$}\\
Using the size of the modulation, Bernstein's inequality and the product estimate, we get
%(the case in which the output has temporal frequencies of size $\sim \lambda_0^2$ will be considered as a low modualtion case later)
\begin{align*}
&\lambda_0^{s'-2a+b}\Vert \langle \tau+|\xi|^2\rangle^{\theta'-1}\mathcal{F}((\lambda_0+|\partial_t|)^a C_{\gtrsim \lambda_0^2}(C_{\gtrsim \lambda_1^2}u_{\lambda_1}Q_{\gtrsim \lambda_1^2}v_{\lambda_1}))\Vert_{L_{\tau,\xi}^2}\\
&\lesssim \lambda_0^{s'-2a+b+2\theta'-2+\frac{d}{2}}\lambda_1^{-a}\Vert (\lambda_1+|\partial_t|)^a C_{\gtrsim \lambda_1^2}u_{\lambda_1}\Vert_{L_{t}^{\infty}L_x^2} \Vert (\lambda_1+|\partial_t|)^a Q_{\gtrsim \lambda_1^2}v_{\lambda_1}\Vert_{L_{t,x}^2}\\
&\lesssim \lambda_0^{s-2a+b+2\theta-2+\frac{d}{2}}\lambda_1^{-s+a-\beta+1}\sup_{|\tau_2|\gtrsim \lambda_1^2}\frac{(\lambda_1+|\tau_2|)^a}{\langle \tau_2-\lambda_1\rangle}\lambda_1^{s'-2a}\Vert (\lambda_1+|\partial_t|)^a C_{\gtrsim \lambda_1^2}u_{\lambda_1}\Vert_{L_{t}^{\infty}L_x^2}\\
&\quad \times\lambda_1^{\beta-1}\Vert \langle\tau - |\xi|\rangle \mathcal{F}(Q_{\gtrsim \lambda_1^2}v_{\lambda_1})\Vert_{L_{\tau,\xi}^2}\\
&\lesssim \lambda_0^{s-2a+b+2\theta-2+\frac{d}{2}}\lambda_1^{-s-\beta+3a-1} \Vert u_{\lambda_1}\Vert_{S_{\lambda_1}^{s,l,\theta}} \Vert v_{\lambda_1}\Vert_{W_{\lambda_1}^{l,s,\theta}}.
\end{align*}
If $s+\beta\geqslant -1+3a$ and $\beta\geqslant-3+\frac{d}{2}+2\theta+a+b$, we can obtain $(3.5)$ by summing up the above estimate.\\

{2. \boldmath $H\times H \rightarrow L$}\\
Since $|\tau_2|\gtrsim \lambda_1^2$, from the relation $|\tau_0 +|\xi_0|^2| = |\tau_1 +\tau_2+|\xi_0|^2|\ll \lambda_0^2$, it follows that $|\tau_1| \gtrsim \lambda_1^2$.
We consider the expression $I$ and use H{\"o}lder's and Bernstein's inequality as follows:
\begin{align*}
&|I(\mathcal{F}(Q_{\gtrsim \lambda_1^2}v_{\lambda_1}), \mathcal{F}(C_{\ll \lambda_0^2}w_{\lambda_0}),\mathcal{F}(C_{\gtrsim \lambda_1^2}u_{\lambda_1}))| \\
&\lesssim \lambda_0^{\frac{d}{2}}\Vert C_{\gtrsim \lambda_1^2}u_{\lambda_1}\Vert_{L_{t,x}^2} \Vert Q_{\gtrsim \lambda_1^2}v_{\lambda_1} \Vert_{L_{t,x}^2} \Vert C_{\ll \lambda_0^2}w_{\lambda_0} \Vert_{L_{t}^{\infty}L_x^2}\\
&\lesssim \lambda_0^{\frac{d}{2}+s+2\theta-1}\lambda_1^{-s-2\theta-\beta-1}\lambda_1^{s'-2a}\Vert \langle \tau+|\xi|^2\rangle^{\theta'}\mathcal{F}((\lambda_1+|\partial_t|)^a C_{\gtrsim \lambda_1^2}u_{\lambda_1})\Vert_{L_{\tau,\xi}^2}\\
&\quad \times  \lambda_1^{\beta-1}\Vert \langle \tau-|\xi|\rangle \mathcal{F}(Q_{\gtrsim \lambda_1^2}v_{\lambda_1})\Vert_{L_{\tau,\xi}^2} \lambda_0^{-s}\Vert C_{\ll \lambda_0^2}w_{\lambda_0}\Vert_{L_{t}^{p}L_x^2}\\
&\lesssim \lambda_0^{\frac{d}{2}+s+2\theta-1}\lambda_1^{-s-2\theta-\beta-1} \Vert u_{\lambda_1}\Vert_{S_{\lambda_1}^{s,l,\theta}} \Vert v_{\lambda_1}\Vert_{W_{\lambda_1}^{l,s,\theta}} \Vert w_{\lambda_0}\Vert_{S^{-s,l,1-\theta}_{\lambda_0}},
\end{align*}
where the last inequality follows from the embedding $(2.7)$ applied for $\frac{1}{p}=\theta-\frac{1}{2}$.\\
Since $s+\frac{d}{2}+2\theta-1>0$, we require $\beta
\geqslant-2+\frac{d}{2}$ in order to obtain $(3.4)$.\\

{3. \boldmath $H\times L \rightarrow H$}\\
Using the size of the modulation, the product estimate and Bernstein's inequality, we have
%This interaction can be treated by assuming that the Schr{\"o}dinger solution $u_{\lambda_1}$ has modulation $\gg \lambda_1^2$, because the case in which the modulation is $\sim \lambda_1^2$ (i.e. the temporal frequencies are $\ll \lambda_1^2$) might lead to a low output modulation, which is considered in the subsequent case.
\begin{align*}
&\lambda_0^{s'-2a+b}\Vert \langle \tau+|\xi|^2\rangle^{\theta'-1}\mathcal{F}(	(\lambda_0 + |\partial_t|)^a C_{\gtrsim \lambda_0^2}(C_{\gtrsim \lambda_1^2}u_{\lambda_1}Q_{\ll \lambda_1^2}v_{\lambda_1} ))\Vert_{L_{\tau,\xi}^2}\\
&\lesssim \lambda_0^{s'-2a+b+2\theta'-2+\frac{d}{2}}\lambda_1^{-a}\Vert(\lambda_1+|\partial_t|)^a C_{\gtrsim \lambda_1^2}u_{\lambda_1}\Vert_{L_{t,x}^2}\Vert(\lambda_1+|\partial_t|)^a Q_{\ll \lambda_1^2}v_{\lambda_1}\Vert_{L_t^{\infty}L_x^2}\\
& \lesssim \lambda_0^{s-2a+b+2\theta-2+\frac{d}{2}}\lambda_1^{-s-2\theta-l+2a}\lambda_1^{s'-2a}\Vert\langle \tau+|\xi|^2\rangle^{\theta'}\mathcal{F}( (\lambda_1+|\partial_t|)^a C_{\gtrsim \lambda_1^2}u_{\lambda_1})\Vert_{L_{\tau,\xi}^2}\lambda_1^{l-a}\Vert(\lambda_1+|\partial_t|)^a Q_{\ll \lambda_1^2}v_{\lambda_1}\Vert_{L_t^{\infty}L_x^2}\\
&\lesssim \lambda_0^{s-2a+b+2\theta-2+\frac{d}{2}}\lambda_1^{-s-2\theta-l+2a}\Vert u_{\lambda_1} \Vert_{S_{\lambda_1}^{s,l,\theta}} \Vert v_{\lambda_1}\Vert_{W_{\lambda_1}^{l,s,\theta}}.
\end{align*}
The estimate can be summed up provided $s+l\geqslant-2\theta+2a$ and $l \geqslant-2+\frac{d}{2}+b$.\\

{4. \boldmath $H\times L \rightarrow L$}\\
Since low modulation for the wave $v_{\lambda_1}$ implies $|\tau_2|\ll \lambda_1^2$, we observe from the relation \begin{equation*}
|\tau_0+|\xi_0|^2| = |\tau_1 +\underbrace{\tau_2 - |\xi_2|}_{\ll \lambda_1^2}+\underbrace{|\xi_2|+|\xi_0|^2}_{\sim \max\{\lambda_1, \lambda_0^2\}}|\ll \lambda_0^2,
\end{equation*} that the temporal frequencies $|\tau_1|\ll \lambda_1^2$.  Using this information, we consider the following subcases for the size of the temporal frequencies $\tau_1$ and $\tau_2$ and make conclusions in the last two columns:
\begin{table}[H]
	\centering
	\caption{$H \times L\rightarrow L$}
	\vspace{-2mm}
	\begin{tabular}{|l|l|l|l|l|}
		\hline	$|\tau_1|$ & $|\tau_2|$	& $|\tau_0|$&  Conclusion \\ \hline
		$\lesssim \lambda_1$& $\lesssim \lambda_1$ & $\lesssim \lambda_1$ & $\lambda_0^2\lesssim \lambda_1$ \\ \hline

		$\lesssim \lambda_1$& $\lambda_1 \ll \cdot \ll \lambda_1^2$ & $\lambda_1 \ll \cdot \ll \lambda_1^2$ &$|\tau_W|\sim |\tau_0|,~ \lambda_1 \ll \lambda_0^2$	\\ \hline
		
		$\lambda_1\ll\cdot \ll \lambda_1^2$& $\lesssim \lambda_1$ & $\lambda_1 \ll \cdot \ll \lambda_1^2$ &$|\tau_S|\sim |\tau_0|, ~\lambda_1 \ll \lambda_0^2$	\\ \hline
		
		$\lambda_1 \ll\cdot \ll \lambda_1^2$& $\lambda_1 \ll \cdot \ll \lambda_1^2$ & $\lambda_1 \ll \cdot \ll \lambda_1^2$ &$|\tau_S|,|\tau_W|\sim \lambda_0^2, ~\lambda_1 \ll \lambda_0^2$	\\ \hline
		
		$\lambda_1 \ll\cdot \ll \lambda_1^2$& $\lambda_1 \ll \cdot \ll \lambda_1^2$ & $\ll \lambda_1$ &$\lambda_0^2 \ll \lambda_1$	\\ \hline	
	\end{tabular}
\end{table}
\vspace{-2mm}
In all the above cases, we can bound the weight $(\lambda_1+|\tau_1|)^{-a}(\lambda_1+|\tau_2|)^{-a}$ by $\lambda_1^{-a}\lambda_0^{-2a}$. We proceed to prove $(3.4)$ by duality and consider the expression $I$. Using Cauchy-Schwarz inequality, we get
\begin{align}
&|I(\mathcal{F}(Q_{\ll \lambda_1^2}v_{\lambda_1}), \mathcal{F}(C_{\ll \lambda_0^2}w_{\lambda_0}),\mathcal{F}(C_{\sim \lambda_1^2}u_{\lambda_1}))| \lesssim \Vert C_{\sim \lambda_1^2}u_{\lambda_1}\Vert_{L_{t,x}^2} \Vert \overline{Q_{\ll \lambda_1^2}v_{\lambda_1}}C_{\ll \lambda_0^2}w_{\lambda_0}\Vert_{L_{t,x}^2}.
\end{align}
In order to apply the bilinear estimate $(2.13)$, we decompose the space-time Fourier supports of the two terms in RHS of the above display into pieces of size $L_2$ and $L_0$, respectively and obtain:
\begin{align*}
(3.15)&\lesssim \Vert C_{\sim \lambda_1^2}u_{\lambda_1}\Vert_{L_{t,x}^2}\sum_{\substack{L_0\ll \lambda_0^2\\L_2 \ll \lambda_1^2}}\frac{\lambda_0^{\frac{d-1}{2}}}{\lambda_0^{\frac{1}{2}}}(L_0L_2)^{1/2}\Vert  Q_{L_2}v_{\lambda_1}\Vert_{L_{t,x}^2} \Vert C_{L_0}w_{\lambda_0}\Vert_{L_{t,x}^2}\\
&\lesssim \lambda_0^{\frac{d}{2}-2+2\theta+s-2a} \lambda_1^{-s-l+2a-2\theta}\lambda_1^{s'-2a}\Vert  \langle\tau + |\xi|^2\rangle^{\theta'}\mathcal{F}((\lambda_1 + |\partial_t|)^a C_{\sim\lambda_1^2}u_{\lambda_1})\Vert_{L_{\tau,\xi}^2}\\
&\quad\times \lambda_1^{l-a}\Vert \langle \tau-|\xi|\rangle^{\theta}\mathcal{F}((\lambda_1+|\partial_t|)^a Q_{\ll \lambda_1^2}v_{\lambda_1})\Vert_{L_{\tau,\xi}^2}\lambda_0^{-s}\Vert\langle\tau + |\xi|^2\rangle^{1-\theta}\mathcal{F}(C_{\ll \lambda_0^2}w_{\lambda_0})\Vert_{L_{\tau,\xi}^2}\\
&\lesssim \lambda_0^{\frac{d}{2}-2+2\theta+s-2a} \lambda_1^{-s-l+2a-2\theta} \Vert u_{\lambda_1}\Vert_{S_{\lambda_1}^{s,l,\theta}} \Vert v_{\lambda_1}\Vert_{W_{\lambda_1}^{l,s,\theta}}\Vert w_{\lambda_0}\Vert_{S_{\lambda_0}^{-s,l,1-\theta}}.
\end{align*}
$s+l\geqslant-2\theta+2a$ and $l \geqslant -2+\frac{d}{2}$ are the requirements for the summability of the estimate.\\

{5. \boldmath $L\times H \rightarrow L$}\\
We observe that the temporal frequencies $|\tau_2|\sim \lambda_1^2$.
Using H{\"o}lder's inequality, we have
\begin{align}
&|I(\mathcal{F}(Q_{\sim \lambda_1^2}v_{\lambda_1}), \mathcal{F}(C_{\ll\lambda_0^2}w_{\lambda_0}),\mathcal{F}(C_{\ll \lambda_1^2}u_{\lambda_1}))|\lesssim \Vert Q_{\sim \lambda_1^2}v_{\lambda_1}\Vert_{L_{t,x}^2} \Vert C_{\ll \lambda_1^2}u_{\lambda_1}\overline{C_{\ll \lambda_0^2}w_{\lambda_0}}\Vert_{L_{t,x}^2}.
\end{align}
We apply the bilinear estimate $(2.12)$ to the above by decomposing the space-time Fourier supports of $C_{\ll \lambda_1^2}u_{\lambda_1}$ and $\overline{C_{\ll \lambda_0^2}w_{\lambda_0}}$ into pieces of size $L_1$ and $L_0$ respectively:
\begin{align*}
(3.16)&\lesssim \Vert Q_{\sim \lambda_1^2}v_{\lambda_1}\Vert_{L_{t,x}^2} \sum_{\substack{L_0\ll \lambda_0^2\\L_1\ll \lambda_1^2}}(L_0L_1)^{\frac{1}{2}}\frac{\lambda_0^{\frac{d-1}{2}}}{\lambda_1^{\frac{1}{2}}}\Vert C_{L_1}u_{\lambda_1}\Vert_{L_{t,x}^2}\Vert C_{L_0}w_{\lambda_0}\Vert_{L_{t,x}^2}\\
&\lesssim \lambda_0^{\frac{d-3}{2}+s+2\theta}\lambda_1^{-\beta-s-\frac{3}{2}}  \lambda_1^{s}\Vert \langle \tau+|\xi|^2\rangle^{\theta}\mathcal{F}(C_{\ll \lambda_1^2}u_{\lambda_1})\Vert_{L_{\tau,\xi}^2} \lambda_1^{\beta-1}\Vert \langle \tau-|\xi|\rangle \mathcal{F}(Q_{\sim \lambda_1^2}v_{\lambda_1})\Vert_{L_{\tau,\xi}^2}\\
&\quad \times\lambda_0^{-s}\Vert \langle \tau+|\xi|^2\rangle^{1-\theta} \mathcal{F}(C_{\ll \lambda_0^2}w_{\lambda_0})\Vert_{L_{\tau,\xi}^2}\\
&\lesssim \lambda_0^{\frac{d-3}{2}+s+2\theta}\lambda_1^{-\beta-s-\frac{3}{2}}\Vert u_{\lambda_1} \Vert_{S_{\lambda_1}^{s,l,\theta}} \Vert v_{\lambda_1}\Vert_{W_{\lambda_1}^{l,s,\theta}}\Vert w_{\lambda_0}\Vert_{S^{-s,l,1-\theta}_{\lambda_0}}.
\end{align*}
One can sum the above estimate to obtain $(3.4)$ provided $s+\beta\geqslant-\frac{3}{2}$ and $\beta\geqslant -3+\frac{d}{2}+2\theta$.\\
For $d=1$, in the case $\lambda_0\sim \lambda_1\sim 1$, we have
\begin{align*}
|I(\mathcal{F}(Q_{\sim \lambda_1^2}v_{\lambda_1}), \mathcal{F}(C_{\ll\lambda_0^2}w_{\lambda_0}),\mathcal{F}(C_{\ll \lambda_1^2}u_{\lambda_1}))|
&\lesssim \Vert Q_{\sim \lambda_1^2}v_{\lambda_1}\Vert_{L_{t,x}^2} \Vert C_{\ll \lambda_1^2}u_{\lambda_1}\Vert_{L_{t}^{\infty}L_x^2} \Vert C_{\ll \lambda_0^2}w_{\lambda_0}\Vert_{L_{t,x}^2}\\
&\lesssim \Vert v_{\lambda_1}\Vert_{W^{l,s,\theta}_{\lambda_1}}\Vert u_{\lambda_1}\Vert_{S^{s,l,\theta}_{\lambda_1}} \Vert w_{\lambda_0}\Vert_{S^{-s,l,1-\theta}_{\lambda_0}}.
\end{align*}

{6. \boldmath $L\times L \rightarrow H$}\\
From the constraint $\tau_0 = \tau_1+\tau_2$, we note that the output temporal frequencies are of size $\sim \lambda_1^2$. This implies that the output has a modulation of size $\sim \lambda_1^2$. We use the size of the modulation and the temporal frequencies and employ the bilinear estimate for wave-Schr{\"o}dinger interaction. By orthogonality arguments, we reduce the estimates to the case when the spatial Fourier supports of $C_{\ll \lambda_1^2}u_{\lambda_1}$ and $Q_{\ll \lambda_1^2}v_{\lambda_1}$ are of size $\sim \lambda_0$.
\begin{align*}
&\lambda_0^{s'-2a+b}\Vert \langle \tau+|\xi|^2\rangle^{\theta'-1}	\mathcal{F}((\lambda_0 + |\partial_t|)^a C_{\gtrsim \lambda_0^2}(C_{\ll \lambda_1^2}u_{\lambda_1}Q_{\ll \lambda_1^2}v_{\lambda_1})) \Vert_{L_{\tau,\xi}^2}\\
&\lesssim \lambda_0^{s'-2a+b}\lambda_1^{2\theta'-2+2a}\Vert C_{\ll \lambda_1^2}u_{\lambda_1}Q_{\ll \lambda_1^2}v_{\lambda_1} \Vert_{L_{t,x}^2}\\
&\lesssim \lambda_0^{s'-2a+b}\lambda_1^{2\theta'-2+2a}\sum_{L_1,L_2\ll \lambda_1^2}\frac{\lambda_0^{\frac{d-1}{2}}}{\lambda_1^{\frac{1}{2}}}(L_1L_2)^{\frac{1}{2}} \Vert C_{L_1}u_{\lambda_1}\Vert_{L_{t,x}^2}\Vert  Q_{L_2}v_{\lambda_1}\Vert_{L_{t,x}^2}\\
&\lesssim \lambda_0^{s'-2a+b+\frac{d-1}{2}}\lambda_1^{2\theta'-\frac{5}{2}+2a-s-l} \Vert u_{\lambda_1}\Vert_{S_{\lambda_1}^{s,l,\theta}} \Vert v_{\lambda_1}\Vert_{W_{\lambda_1}^{l,s,\theta}}. 
\end{align*}
To sum this estimate, we require $s+l\geqslant -\frac{5}{2}+2\theta'+2a$ and  $l\geqslant -3+\frac{d}{2}+2\theta+b$.\\

{7. \boldmath $L\times H \rightarrow H$}\\
With Bernstein's inequality and the product estimate, we get
%(the case in which the output has temporal frequencies of size $\sim \lambda_0^2$ will be considered as a low modualtion case later)
\begin{align*}
&\lambda_0^{s'-2a+b}\Vert \langle \tau+|\xi|^2\rangle^{\theta'-1} \mathcal{F}((\lambda_0+|\partial_t|)^a C_{\gtrsim \lambda_0^2}(C_{\ll \lambda^2}u_{\lambda_1}Q_{\gtrsim \lambda_1^2}v_{\lambda_1}))\Vert_{L_{\tau,\xi}^2}\\
&\lesssim \lambda_0^{s'-2a+b+2\theta'-2+\frac{d}{2}}\lambda_1^{-a}\Vert (\lambda_1+|\partial_t|)^a C_{\ll \lambda^2}u_{\lambda_1}\Vert_{L_{t}^{\infty}L_x^2} \Vert (\lambda_1+|\partial_t|)^a Q_{\gtrsim \lambda_1^2}v_{\lambda_1}\Vert_{L_{t,x}^2}\\
&\lesssim \lambda_0^{s'-2a+b+2\theta'-2+\frac{d}{2}}\lambda_1^{-s+a-\beta+1}\sup_{|\tau_2|\gtrsim \lambda_1^2}\frac{(\lambda_1+|\tau_2|)^a}{\langle \tau_2-\lambda_1\rangle}~\lambda_1^{s}\Vert \langle \tau+|\xi|^2\rangle^{\theta}\mathcal{F}(C_{\ll \lambda^2}u_{\lambda_1})\Vert_{L_{\tau,\xi}^2}\\
 &\quad \times \lambda_1^{\beta-1}\Vert \langle\tau - |\xi|\rangle \mathcal{F}(Q_{\gtrsim \lambda_1^2}v_{\lambda_1})\Vert_{L_{\tau,\xi}^2}\\
&\lesssim \lambda_0^{s'-2a+b+2\theta'-2+\frac{d}{2}}\lambda_1^{-s-\beta+3a-1} \Vert u_{\lambda_1}\Vert_{S_{\lambda_1}^{s,l,\theta}} \Vert v_{\lambda_1}\Vert_{W_{\lambda_1}^{l,s,\theta}}.
\end{align*}
If $s+\beta\geqslant -1+3a$ and $\beta\geqslant-3+\frac{d}{2}+2\theta+a+b$, we can obtain $(3.5)$ by summing up the above estimate.\\

Hence, for $\lambda_0\ll \lambda_1$, we conclude that the following constraints are required for summability in dimension $d\leqslant 3$:
\begin{equation}
\bullet~  l \geqslant -\frac{3}{2}+2\theta+b \quad \bullet~ \beta\geqslant -\frac{3}{2}+2\theta+a+b \quad \bullet~ s+l \geqslant \max\Big\{-2\theta+2a, -\frac{5}{2}+2\theta'+2a\Big\} \quad  \bullet ~s+\beta\geqslant -1+3a.
\end{equation}

For  $\frac{1}{2}\leqslant a <1$, the estimate $(3.6)$ holds provided $l\geqslant  -\frac{3}{2}+2\theta, s+l\geqslant 0$ and $s-l\leqslant 3- 2\theta$. It can be proved by imitating the proof of the product estimate $\Vert fg\Vert_{H^{s-3+2\theta}}\lesssim \Vert f\Vert_{H^s}\Vert g\Vert_{H^l}$ for each case above.\\
From $(3.10),(3.14)$ and $(3.17)$, we conclude that
\begin{align*}
&\bullet~l>-\frac{3}{2}+2\theta+b \quad \bullet ~ \beta>-\frac{3}{2}+2\theta+a+b  \quad \bullet~s-l\leqslant  \min\{2-2\theta+a-b, 3-2\theta\} \\
& \bullet~s-\beta\leqslant \min\Big\{3-2\theta-b,\frac{5}{2}-2\theta\Big\} \quad \bullet~s+l\geqslant \max\Big\{-\frac{5}{2}+2\theta'+2a, -2\theta+2a\Big\} \quad \bullet s+\beta\geqslant -1+3a
\end{align*}
are the requirements for the estimate $(3.1)$ to hold.
\end{proof}

\section{Multilinear estimates for wave non-linearity}
\begin{theorem}
	Let $d\leqslant 3$ and $s,l$ satisfy $(1.3)$. There exist $a,b,s,l, \beta,\theta\in \mathbb{R}$ such that the estimate
	\begin{equation}
	\Vert |\nabla|(\overline{\varphi}\psi)\Vert_{R^{l,s,\theta-1}} \lesssim \Vert \varphi\Vert_{S^{s,l,\theta}} \Vert \psi\Vert_{S^{s,l,\theta}}
	\end{equation}
	holds.
	
\end{theorem}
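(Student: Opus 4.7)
The proof proceeds in close parallel to Theorem 3.1, so I would mirror its architecture. First I would reduce the $R^{l,s,\theta-1}$ norm of $|\nabla|(\overline{\varphi}\psi)$ to a characterization analogous to (3.2), splitting into the low-modulation piece (weighted by $\lambda_0^{l-a}\langle\tau-|\xi|\rangle^{\theta-1}(\lambda_0+|\partial_t|)^a$) and the high-modulation piece (weighted by $\lambda_0^{\beta-1}$), with the $|\nabla|$ simply contributing a factor $\lambda_0$. After a Littlewood--Paley decomposition
\[
P_{\lambda_0}|\nabla|(\overline{\varphi}\psi)=\sum_{\lambda_1,\lambda_2\in 2^{\mathbb{N}}} P_{\lambda_0}|\nabla|(\overline{\varphi_{\lambda_1}}\psi_{\lambda_2}),
\]
the task reduces, by Cauchy--Schwarz and $\ell^2$-summation in $\lambda_0$, to dyadic bilinear estimates in three frequency regimes: high$\times$high $\to$ low ($\lambda_1\sim\lambda_2\gg\lambda_0$); high$\times$low / low$\times$high $\to$ high ($\lambda_0\sim\lambda_1\gg\lambda_2$ or symmetric); and balanced ($\lambda_0\sim\lambda_1\sim\lambda_2$).

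Within each frequency regime I would decompose each Schr{\"o}dinger factor into $C_{\ll\lambda_i^2}+C_{\gtrsim\lambda_i^2}$ and the wave output into $Q_{\ll\lambda_0^2}+Q_{\gtrsim\lambda_0^2}$, producing the eight modulation subcases familiar from Section~3. For subcases with high wave output modulation (controlled by the $\lambda_0^{\beta-1}$ part of the norm), I would estimate directly using the size of the modulation weight, Lemma 2.6 to redistribute the temporal-weight factors $(\lambda_i+|\partial_t|)^a$, and Bernstein/Sobolev in space; the $|\nabla|$ factor just costs a harmless $\lambda_0$. For subcases with low wave output modulation, I would dualize using property (vii) of Section 2.4 against a wave test function $\tilde v_{\lambda_0}$ of frequency $\lambda_0$ and modulation $\ll\lambda_0^2$, reducing matters to a trilinear expression of the type $I(\mathcal F(\tilde v),\mathcal F(\overline{\varphi_{\lambda_1}}),\mathcal F(\psi_{\lambda_2}))$. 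The right tool is then either the Schr{\"o}dinger--Schr{\"o}dinger bilinear Strichartz (2.12) (for the high-high to low and the high-low regimes) or the transversal trilinear estimate of Lemma 2.4 (for the balanced regime, where all three modulations are small and the resonance function forces a Loomis--Whitney geometry), and Remark 2.7 allows the $d=1$ cases.

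The key observation is the resonance identity
\[
\tau_0-|\xi_0|=\bigl(\tau_2+|\xi_2|^2\bigr)-\bigl(\tau_1+|\xi_1|^2\bigr)+\bigl(|\xi_1|^2-|\xi_2|^2-|\xi_1-\xi_2|\bigr),
\]
so that when all three modulations are low the last term must be small, which is exactly the transversal configuration captured by Lemma 2.4 via the gain $(LL_1L_2)^{1/2}\lambda_1^{-1/2}\log\lambda_1$. This is what produces the threshold $2s-l>\tfrac12$ in $d=3$ that bounds the region (1.3) from below.

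The main obstacle is the high-high to low interaction with low wave output modulation, since the output derivative only gives $\lambda_0$ while both Schr{\"o}dinger inputs sit at the common high frequency $\lambda_1$ and must pay the weight $\lambda_1^{-2s}$: the bilinear estimate (2.12) yields $\lambda_1^{(d-1)/2}\lambda_1^{-1/2}=\lambda_1^{(d-2)/2}$ and the low wave output modulation pays a factor $\lambda_0^{-l+1/2}$ through the $L^2$-based norm, which enforces the sharp condition $2s-l\geqslant \tfrac{d-1}{2}+2\theta-2$ up to $\theta$-losses. After running all twenty-four subcases and tracking the $a,b,s',\beta,\theta'$ corrections of (2.2), the collected constraints would take the form already recorded in (3.10), (3.14), (3.17) plus an analogous $s+l$ constraint, and a direct verification shows they are all satisfied inside (1.3) with strict inequalities in the parameters' roles. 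Low-frequency cases ($\lambda_i\sim 1$) and the $d=1$ balanced case would be dispatched by Sobolev embedding and Remark 2.7 exactly as in the previous theorem.
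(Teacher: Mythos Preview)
Your overall architecture is right, but there is a genuine gap in the assignment of tools to the frequency regimes. You propose to use the bilinear Schr{\"o}dinger estimate (2.12) for the high$\times$high$\to$low interaction ($\lambda_0\ll\lambda_1\sim\lambda_2$) and reserve Lemma~2.4 for the balanced case ($\lambda_0\sim\lambda_1\sim\lambda_2$). In the paper the trilinear Loomis--Whitney estimate of Lemma~2.4 is applied throughout the \emph{entire} high--high regime $\mu\lesssim\lambda_1\sim\lambda_2$ in the critical $L\times L\to L$ subcase (Case~II.6), and this is not a cosmetic choice. If you pair the two Schr{\"o}dinger factors via (2.12) (or pair the wave dual with one Schr{\"o}dinger via (2.13)) when $\mu\ll\lambda_1$, the best you can extract is a factor of order $\lambda_1^{(d-2)/2}(L_1L_2)^{1/2}$ or $\mu^{(d-1)/2}\lambda_1^{-1/2}(L_0L_1)^{1/2}$; in either case you lose the crucial $L_0^{1/2}$ gain on the wave modulation that Lemma~2.4 supplies via the bound $(L_0L_1L_2)^{1/2}\lambda_1^{-1/2}$. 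After summation this leads to the constraint $2s-l\gtrsim 3/2$ for $d=3$ (and $\gtrsim 1$ for $d=2$), far from the sharp threshold $2s-l>-\tfrac12+2\theta\approx\tfrac12$ that the paper obtains and that matches the boundary of~(1.3). The transversal geometry underlying Lemma~2.4 is available whenever $1\ll\mu\lesssim\lambda_1\sim\lambda_2$, not only when $\mu\sim\lambda_1$; you must use it in the full high--high range for $d=2,3$. (For $d=1$ the bilinear route via (2.13) does suffice, as the paper shows.)

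A secondary issue: the constraints you expect to recover are not those of (3.10), (3.14), (3.17). The wave nonlinearity produces a different family of inequalities---lower bounds on $2s-l$, $2s-\beta$, $s-l$, $s-\beta$ (see (4.10), (4.11))---rather than the upper bounds on $s-l$, $s-\beta$ and lower bounds on $l,\beta,s+l,s+\beta$ that govern the Schr{\"o}dinger estimate. Your numerology in the ``main obstacle'' paragraph (yielding $2s-l\geqslant\tfrac{d-1}{2}+2\theta-2$) also does not follow from (2.12); recheck the role of the dual wave modulation weight.
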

\begin{proof}
	We choose the parameters as in $(2.2)$. Using the definition of the norms, it suffices to show the following:
	\begin{align}
	\Big(\sum_{\mu \in 2^{\mathbb{N}}}\mu^{2(l-a+1)}\Vert \langle \tau-|\xi|\rangle ^{\theta'-1}\mathcal{F}((\mu+|\partial_t|)^aQ_{\ll\mu^2}P_{\mu}(\bar{\varphi}\psi))\Vert_{L_{\tau,\xi}^2}^2\Big)^{\frac{1}{2}}\lesssim \Vert \varphi\Vert_{S^{s,l,\theta}}\Vert \psi \Vert_{S^{s,l,\theta}}\\
	\Big(\sum_{\mu \in 2^{\mathbb{N}}}\mu^{2\beta}\Vert \mathcal{F}(Q_{\gtrsim\mu^2}P_{\mu}(\bar{\varphi}\psi))\Vert_{L_{\tau,\xi}^2}^2\Big)^{\frac{1}{2}}\lesssim \Vert \varphi\Vert_{S^{s,l,\theta}}\Vert \psi \Vert_{S^{s,l,\theta}}\\
	\Big(\sum_{\mu \in 2^{\mathbb{N}}}\mu^{2(l+2\theta-2)}\Vert P_{\mu}(\bar{\varphi}\psi)\Vert_{L_{t}^{\infty}L_x^2}^2\Big)^{\frac{1}{2}}\lesssim \Vert \varphi\Vert_{S^{s,l,\theta}}\Vert \psi \Vert_{S^{s,l,\theta}}
	\end{align}
	The definition of high and low modulation remains the same as in section 3, so do the abbreviations $H$ and $L$.  We now append $1$ or $2$ as subscripts with $\tau$ to denote the temporal frequencies of $\varphi$ and $\psi$. The same is done for the spatial frequencies.\\
	Employing the frequency trichotomy, we decompose the non-linearity $P_{\mu}(\bar{\varphi}\psi)$ into high-low, balanced (high-high) and low-high interactions as follows:
\begin{equation}
P_{\mu}(\overline{\varphi}\psi) = P_{\mu}(\overline{\varphi}\psi_{\ll\mu})	+\sum_{\mu \lesssim \lambda_1\sim \lambda_2 }P_{\mu}(\overline{\varphi_{\lambda_1}}\psi_{\lambda_2})+P_{\mu}(\overline{\varphi_{\ll\mu}}\psi) =\sum_{\lambda\ll \mu} P_{\mu}(\overline{\varphi_{\mu}}\psi_{\lambda})	+\sum_{\mu \lesssim \lambda_1\sim \lambda_2 }P_{\mu}(\overline{\varphi_{\lambda_1}}\psi_{\lambda_2})+\sum_{\lambda\ll\mu}P_{\mu}(\overline{\varphi_{\lambda}}\psi_{\mu})
\end{equation}
It suffices to consider the first two interactions above. 
\begin{remark}
	As in section 3, we consider the cases where $|\xi_i|,|\tau_i|\gg 1, i=1,2$. For the very low frequency cases ($|\xi_i|\sim 1, i=1,2$ or the lowest frequency is very small), most of the estimates follow analogously without having to decompose the space-time Fourier supports. The cases in which these arguments do not follow will be treated separately.
\end{remark}
	
\textbf{Case I.  High-low interaction (\boldmath $\mu\gg \lambda$)}\\
We decompose the spatially localised waves $\overline{\varphi_{\mu}}$ and $\psi_{\lambda}$ as follows:
	\begin{equation*}
\overline{\varphi_{\mu}} = \overline{C_{\ll \mu^2}\varphi_{\mu}} 
%+ \overline{P_{\mu \ll\cdot \lesssim \mu^2}^{(t)}\varphi_{\mu}} 
+ \overline{C_{\gtrsim \mu^2}\varphi_{\mu}}, \quad
\psi_{\lambda} = C_{\ll \lambda^2}\psi_{\lambda}
% + P_{\lambda \ll\cdot \lesssim \lambda^2}^{(t)}\psi_{\mu}
+ C_{\gtrsim \lambda^2} \psi_{\lambda}.\\
\end{equation*}
The following cases can be identified from the size of the modulation.\\
\textbf{1. \boldmath $H\times H \rightarrow H$}\\
Since $\mu^2 \lesssim |\tau_0 -|\xi_0||  = |\tau_1 -\tau_2-|\xi_0||$, we conclude that atleast one of the temporal frequencies $\tau_1$ or $\tau_2$  has size $\sim \mu^2$. WLOG, let $\tau_1$ be of size $\sim \mu^2$.
Using H{\"o}lder's and Bernstein's inequality, we have
\begin{align*}
&\mu^{\beta}\Vert \mathcal{F}(Q_{\gtrsim \mu^2}(\overline{C_{\gtrsim \mu^2}\varphi_{\mu}} C_{\gtrsim \lambda^2}\psi_{\lambda}))\Vert_{L_{\tau,\xi}^2}\\
&\lesssim \mu^{\beta}\lambda^{\frac{d}{2}}\Vert \overline{C_{\gtrsim \mu^2}\varphi_{\mu}} \Vert_{L_{t,x}^2} \Vert C_{\gtrsim \lambda^2}\psi_{\lambda}\Vert_{L_t^{\infty}L_x^2}\\
%&\lesssim \mu^{\beta-s-2\theta-b}\lambda^{\frac{d}{2}}\mu^{s'-2a+b}\Vert \langle \tau+|\xi|^2\rangle^{\theta'} \mathcal{F}((\mu+|\partial_t|)^a C_{\gtrsim \mu^2}\varphi_{\mu})\Vert_{L_{\tau,\xi}^2}  \Vert C_{\gtrsim \lambda^2}\psi_{\lambda}\Vert_{L_t^{\infty}L_x^2}\\
&\lesssim \mu^{\beta-s-2\theta-b}\lambda^{\frac{d}{2}-s+a}
 \mu^{s'-2a+b}\Vert \langle \tau+|\xi|^2 \rangle^{\theta'}\mathcal{F}((\mu+|\partial_t|)^a C_{\gtrsim \mu^2}\varphi_{\mu})\Vert_{L_{\tau,\xi}^2 }\\
 &\quad \times \lambda^{s'-2a}\Vert \langle \tau+|\xi|^2\rangle^{\theta'} \mathcal{F}((\lambda+|\partial_t|)^a C_{\gtrsim \lambda^2}\psi_{\lambda})\Vert_{L_{\tau,\xi}^2 }\\
&\lesssim \mu^{\beta-s-2\theta-b}\lambda^{\frac{d}{2}-s+a} \Vert \varphi_{\mu}\Vert_{S_{\mu}^{s,l,\theta}}\Vert \psi_{\lambda}\Vert_{S_{\lambda}^{s,l,\theta}}.
\end{align*}
We require $s-\beta\geqslant-2\theta-b$ and $2s-\beta\geqslant\frac{d}{2}-2\theta+a-b$.\\

\textbf{2. \boldmath $H \times H \rightarrow L$}\\
We consider the expression $I$ and employ  H{\"o}lder's and Bernstein's inequality to obtain
\begin{align*}
&	|I(\mathcal{F}(Q_{\ll \mu^2}\eta_{\mu}),\mathcal{F}( C_{\gtrsim \lambda^2}\psi_{\lambda}),\mathcal{F}(C_{\gtrsim \mu^2}\varphi_{\mu}))|\\
&\lesssim \lambda^{\frac{d}{2}}\Vert C_{\gtrsim \mu^2}\varphi_{\mu} \Vert_{L_{t,x}^2} \Vert C_{\gtrsim \lambda^2}\psi_{\lambda}\Vert_{L_{t,x}^2} \Vert Q_{\ll \mu^2}\eta_{\mu}\Vert_{L_{t}^{\infty}L_x^2} \\
&\lesssim \lambda^{\frac{d}{2}-s+a-2
	\theta}\mu^{l-s+a-b}\mu^{s'-2a+b}\Vert \langle \tau+|\xi|^2 \rangle^{\theta'} \mathcal{F}((\mu+|\partial_t|)^a C_{\gtrsim \mu^2}\varphi_{\mu}) \Vert_{L_{\tau,\xi}^2} \\
&\quad \times\lambda^{s'-2a} \Vert \langle \tau+|\xi|^2 \rangle^{\theta'} \mathcal{F}((\lambda+|\partial_t|)^a C_{\gtrsim \lambda^2}\psi_{\lambda})\Vert_{L_{\tau,\xi}^2}~\mu^{-l-1} \Vert Q_{\ll \mu^2}\eta_{\mu}\Vert_{L_t^pL_x^2}\\
&\lesssim \lambda^{\frac{d}{2}-s+a-2\theta} \mu^{l-s+a-b} \Vert \varphi_{\mu}\Vert_{S_{\mu}^{s,l,\theta}} \Vert \psi_{\lambda} {\Vert_{S_{\lambda}^{s,l,\theta}}} \Vert \eta_{\mu} \Vert_{W_{\mu}^{-l,s,1-\theta}}.
\end{align*}
The last inequality follows from the embedding $(2.7)$ for $\frac{1}{p}=\theta-\frac{1}{2}$. We require $s-l\geqslant a-b$, $2s-l\geqslant \frac{d}{2}-1+2a-b$ to obtain $(4.2)$.\\

\textbf{3. \boldmath  $H \times L \rightarrow H$}\\
Using the relation $\mu^2 \lesssim |\tau_0 -|\xi_0|| = |\tau_1 -\tau_2  -\mu|$ and $|\tau_2|\sim \lambda^2$, we find that $|\tau_1|\gtrsim \mu^2$. Employing H{\"o}lder's and Bernstein's inequality, we obtain
\begin{align*}
&\mu^{\beta}\Vert\mathcal{F}( Q_{\gtrsim \mu^2}(\overline{P_{\gtrsim \mu^2}^{(t)}C_{\gtrsim \mu^2}\varphi_{\mu}}C_{\ll \lambda^2}\psi_{\lambda}))\Vert_{L_{\tau,\xi}^2}\\
&\lesssim \mu^{\beta}\lambda^{\frac{d}{2}}\Vert \overline{P_{\gtrsim \mu^2}^{(t)}C_{\gtrsim \mu^2}\varphi_{\mu}} \Vert_{L_{t,x}^2} \Vert C_{\ll \lambda^2}\psi_{\lambda}\Vert_{L_{t}^{\infty}L_x^2}\\
&\lesssim \mu^{\beta-s-b-2\theta}\lambda^{\frac{d}{2}-s}\mu^{s'-2a+b}\Vert \langle \tau+|\xi|^2\rangle^{\theta'}\mathcal{F}((\mu+|\partial_t|)^a C_{\gtrsim \mu^2}\varphi_{\mu})\Vert_{L_{\tau,\xi}^2} \lambda^{s}\Vert C_{\ll \lambda^2}\psi_{\lambda} \Vert_{L_{t}^{\infty}L_x^2}\\
&\lesssim \mu^{\beta-s-b-2\theta}\lambda^{\frac{d}{2}-s} \Vert \varphi_{\mu}\Vert_{S_{\mu}^{s,l,\theta}} \Vert \psi_{\lambda} {\Vert_{S_{\lambda}^{s,l,\theta}}}.
\end{align*}
For $s-\beta\geqslant-b-2\theta$ and $2s-\beta\geqslant\frac{d}{2}-2\theta-b$, we can sum this estimate and obtain $(4.3)$.\\

	\textbf{4. \boldmath $H \times L \rightarrow L$}\\
From the relation $|\tau_0| = |\tau_1-\tau_2|\ll \mu^2$ and $|\tau_2|\sim \lambda^2$, we conclude that the temporal frequencies $\tau_1$ are such that $|\tau_1| \ll \mu^2$. We use Cauchy-Schwarz inequality for the expression $I$ as follows 
\begin{align}
|I( \mathcal{F}(Q_{\ll \mu^2}\eta_{\mu}), \mathcal{F}(C_{\ll \lambda^2}\psi_{\lambda}),\mathcal{F}(C_{\sim \mu^2}\varphi_{\mu}))|
\lesssim \Vert C_{\sim \mu^2}\varphi_{\mu}\Vert_{L_{t,x}^2}\Vert C_{\ll \lambda^2}\psi_{\lambda} \overline{Q_{\ll \mu^2}\eta_{\mu}}\Vert_{L_{t,x}^2}.
\end{align}
To employ the bilinear estimate $(2.13)$ for the last term in the above display, we decompose the space-time Fourier supports of $C_{\ll \lambda^2}\psi_{\lambda}$ and $\overline{Q_{\ll \mu^2}\eta_{\mu}}$ into pieces of size $L_2$ and $L_0$ respectively:
\begin{align*}
(4.6)\lesssim \Vert C_{\sim \mu^2}\varphi_{\mu}&\Vert_{L_{t,x}^2}\sum_{\substack{L_2\ll \lambda^2\\L_0\ll \mu^2}}\frac{\lambda^{\frac{d-1}{2}}}{\lambda^{\frac{1}{2}}}(L_1L_0)^{\frac{1}{2}}\Vert C_{L_1}\psi_{\lambda}\Vert_{L_{t,x}^2}\Vert Q_{L_0}\eta_{\mu}\Vert_{L_{t,x}^2}\\
&\lesssim \mu^{-s+a-b+l}\lambda^{\frac{d}{2}-1-s}~\mu^{s'-2a+b}\Vert\langle \tau+|\xi|^2\rangle^{\theta'}\mathcal{F}((\mu+|\partial_t|)^a C_{\sim \mu^2}\varphi_{\mu})\Vert_{L_{\tau,\xi}^2}\\
&\quad \times \lambda^{s}\Vert\langle \tau+|\xi|^2\rangle^{\theta} 
\mathcal{F}(C_{\ll \lambda^2}\psi_{\lambda})\Vert_{L_{\tau,\xi}^2}~
\mu^{-l-1}\Vert \langle \tau-|\xi|\rangle^{1-\theta}\mathcal{F}(Q_{\ll \mu^2}\eta_{\mu})\Vert_{L_{\tau,\xi}^2}\\
&\lesssim \mu^{-s+a-b+l}\lambda^{\frac{d}{2}-1-s}~ \Vert \varphi_{\mu}\Vert_{S_{\mu}^{s,l,\theta}} \Vert \psi_{\lambda} {\Vert_{S_{\lambda}^{s,l,\theta}}} \Vert \eta_{\mu} \Vert_{W_{\mu}^{-l,s,1-\theta}}.
\end{align*}
Provided $s-l \geqslant a-b$ and $2s-l\geqslant \frac{d}{2}-1+a-b$, we can sum this estimate.\\

\textbf{5. \boldmath $L \times H \rightarrow H$}\\
We consider two cases for the temporal frequencies $\tau_2$:\\
\textbf{a.	\boldmath $|\tau_2|\lesssim \mu^2$}:
We apply the bilinear estimate $(2.12)$ directly by decomposing the space time Fourier supports of $\overline{C_{\ll\mu^2}\varphi_{\mu}}$ and $C_{\gtrsim \lambda^2}\psi_{\lambda}$ into pieces of size $L_1$ and $L_2$ respectively.
\begin{align*}
&\mu^{\beta}\Vert \mathcal{F}(Q_{\sim \mu^2}( \overline{C_{\ll\mu^2}\varphi_{\mu}}C_{ \lambda^2 \lesssim\cdot \lesssim \mu^2}\psi_{\lambda})) \Vert_{L_{\tau,\xi}^2}\\
&\lesssim
\mu^{\beta}\sum_{\substack{L_1\ll \mu^2\\L_2\lesssim \mu^2}}\frac{\lambda^{\frac{d-1}{2}}}{\mu^{\frac{1}{2}}}(L_1L_2)^{\frac{1}{2}}\Vert C_{L_1} \varphi_{\mu}\Vert_{L_{t,x}^2}\Vert C_{L_2}\psi_{\lambda}\Vert_{L_{t,x}^2}\\
&\lesssim \mu^{\beta-\frac{1}{2}-s}\lambda^{\frac{d-1}{2}-s+a-b}~\mu^s\Vert \langle \tau+|\xi|^2\rangle^{\theta} \mathcal{F}(C_{\ll\mu^2}\varphi_{\mu})\Vert_{L_{\tau,\xi}^2} \lambda^{s'-2a+b}\Vert \langle \tau+|\xi|^2\rangle^{\theta'} \mathcal{F}((\lambda+|\partial_t|)^a C_{\lambda^2 \lesssim\cdot \lesssim \mu^2}\psi_{\lambda})\Vert_{L_{\tau,\xi}^2}\\
& \lesssim \mu^{\beta-\frac{1}{2}-s}\lambda^{\frac{d-1}{2}-s+a-b}  \Vert \varphi_{\mu}\Vert_{S_{\mu}^{s,l,\theta}} \Vert \psi_{\lambda} {\Vert_{S_{\lambda}^{s,l,\theta}}}.
\end{align*}
Provided $s-\beta\geqslant-\frac{1}{2}$ and $2s-\beta\geqslant\frac{d}{2}-1+a-b$, we can sum this estimate to obtain $(4.3)$.\\
For very small spatial frequencies i.e. $\lambda\sim \mu\sim 1$, we have
\begin{align}
\mu^{\beta}\Vert \mathcal{F}(Q_{\sim \mu^2}( \overline{C_{\ll\mu^2}\varphi_{\mu}}C_{ \lambda^2 \lesssim\cdot \lesssim \mu^2}\psi_{\lambda})) \Vert_{L_{\tau,\xi}^2} \lesssim \Vert C_{\ll\mu^2}\varphi_{\mu}\Vert_{L_t^{\infty}L_x^2} \Vert C_{ \lambda^2 \lesssim\cdot \lesssim \mu^2}\psi_{\lambda}\Vert_{L_{t,x}^2}\lesssim  \Vert \varphi_{\mu}\Vert_{S_{\mu}^{s,l,\theta}} \Vert \psi_{\lambda} {\Vert_{S_{\lambda}^{s,l,\theta}}}.
\end{align}
\noindent
\textbf{b.	\boldmath $|\tau_2|\gg \mu^2$}: We make use of the high modulation of $\psi_{\lambda}$ by using H{\"o}lder's and Bernstein's inequality:
\begin{align*}
\mu^{\beta}\Vert \mathcal{F}(Q_{\gg \mu^2}&(\overline{C_{\ll\mu^2}\varphi_{\mu}}C_{\gg \mu^2}\psi_{\lambda}))\Vert_{L_{\tau,\xi}^2} \\
&\lesssim\mu^{\beta}\lambda^{\frac{d}{2}} \Vert C_{\ll\mu^2}\varphi_{\mu} \Vert_{L_{t}^{\infty}L_x^2} \Vert C_{\gg \mu^2}\psi_{\lambda}\Vert_{L_{t,x}^2}\\
&\lesssim \mu^{\beta-2\theta-2a-s}\lambda^{\frac{d}{2}-s+2a-b}\mu^{s}\Vert C_{\ll\mu^2}\varphi_{\mu} \Vert_{L_{t}^{\infty}L_x^2} \lambda^{s'-2a+b}\Vert \langle \tau+|\xi|^2\rangle^{\theta'} \mathcal{F}((\lambda +|\partial_t|)^a C_{\gg \mu^2}\psi_{\lambda})\Vert_{L_{\tau,\xi}^2}\\
&\lesssim \mu^{\beta-2\theta-2a-s}\lambda^{\frac{d}{2}-s+2a-b}\Vert \varphi_{\mu} \Vert_{S^{s,l,\theta}_{\mu}} \Vert \psi_{\lambda}\Vert_{S^{s,l,\theta}_{\lambda}}.
\end{align*}
We require $s-\beta\geqslant -2\theta-2a$ and $2s-\beta\geqslant\frac{d}{2}-2\theta-b$ for summability.\\

	\textbf{6. \boldmath $L \times H \rightarrow L$}\\
From the relation $|\tau_0| = |\tau_1-\tau_2|\ll \mu^2$ nd $|\tau_1| \sim \mu^2$, we conclude that the temporal frequencies $|\tau_2|\sim \mu^2$. Cauchy-Schwarz inequality then gives
%\begin{equation*}
%|\tau_0 -|\xi_0|| = |\underbrace{\tau_1 +|\xi_1|^2}_{\ll \mu^2} -\tau_2 -|\xi_1|^2-|\xi_0||\ll \mu^2,
%\end{equation*}
%we conclude that the temporal frequencies $|\tau_2|\sim \mu^2$. Cauchy-Schwarz inequality then gives
\begin{align}
|I(\mathcal{F}(Q_{\ll \mu^2}\eta_{\mu}), \mathcal{F}(C_{\sim \mu^2}\psi_{\lambda}),\mathcal{F}(C_{\ll\mu^2}\varphi_{\mu}) )|\lesssim \Vert C_{\sim \mu^2}\psi_{\lambda}\Vert_{L_{t,x}^2} \Vert P_{\lambda}(C_{\ll \mu^2}\varphi_{\mu} Q_{\ll \mu^2}\eta_{\mu})\Vert_{L_{t,x}^2} 
\end{align}
In order to apply the bilinear estimate $(2.13)$, we decompose the Fourier supports of the last two terms in the above display into pieces of size $L_1$ and $L_0$ respectively. Since the spatial support of $\psi$ is localised to frequencies of size $\sim \lambda$, the estimate reduces to the case where the spatial supports of $\varphi$ and $\eta$ are also localised to frequencies of size $\sim \lambda$. Using this reduction, we have
\begin{align*}
(4.8)&\lesssim \Vert C_{\sim \mu^2}\psi_{\lambda}\Vert_{L_{t,x}^2}\sum_{L_0,L_1\ll \mu^2}\frac{\lambda^{\frac{d-1}{2}}}{\mu^{\frac{1}{2}}}(L_1L_0)^{\frac{1}{2}}\Vert C_{L_1}\varphi_{\mu}\Vert_{L_{t,x}^2}\Vert Q_{L_0}\eta_{\mu}\Vert_{L_{t,x}^2}\\
&\lesssim \mu^{-\frac{1}{2}+2(\theta-\theta')-2a+l-s}\lambda^{\frac{d-1}{2}-s'-b+2a}\mu^s \Vert \langle \tau+|\xi|^2\rangle^{\theta}\mathcal{F}(C_{\ll\mu^2}\varphi_{\mu}) \Vert_{L_{\tau,\xi}^2} \lambda^{s'-2a+b}\Vert \langle\tau+|\xi|^2\rangle \mathcal{F}(C_{\sim \mu^2}\psi_{\lambda})\Vert_{L_{\tau,\xi}^2}\\
&\quad \times \mu^{-l-1}\Vert \langle\tau-|\xi|\rangle^{1-\theta}\mathcal{F}(Q_{\ll \mu^2}\eta_{\mu})\Vert_{L_{\tau,\xi}^2}\\
&\lesssim \mu^{-\frac{1}{2}+2(\theta-\theta')-2a+l-s}\lambda^{\frac{d-1}{2}-s'-b+2a} \Vert \varphi_{\mu}\Vert_{S_{\mu}^{s,l,\theta}} \Vert \psi_{\lambda} {\Vert_{S_{\lambda}^{s,l,\theta}}} \Vert \eta_{\mu} \Vert_{W_{\mu}^{-l,s,1-\theta}}.
\end{align*}
Provided $s-l \geqslant -\frac{1}{2}+2(\theta-\theta')-2a$ and $2s-l \geqslant\frac{d}{2}-1-b$, we can obtain $(4.1)$ by summing the estimate.\\
For very small spatial frequencies $\lambda \sim \mu \sim 1$, the conclusion $|\tau_2|\sim \mu^2$ is not necessarily true. We have
\begin{align}
|I(\mathcal{F}(Q_{\ll \mu^2}\eta_{\mu}), \mathcal{F}(C_{\sim \mu^2}\psi_{\lambda}),\mathcal{F}(C_{\ll\mu^2}\varphi_{\mu}) )|&\lesssim \Vert C_{\ll \mu^2}\varphi_{\mu}\Vert_{L_t^{\infty}L_x^2} \Vert C_{\gtrsim \lambda^2}\psi_{\lambda}\Vert_{L_{t,x}^2} \Vert Q_{\ll \mu^2}\eta_{\mu}\Vert_{L_{t,x}^2}\\\nonumber
&\lesssim \Vert \varphi_{\mu}\Vert_{S_{\mu}^{s,l,\theta}} \Vert \psi_{\lambda} {\Vert_{S_{\lambda}^{s,l,\theta}}} \Vert \eta_{\mu} \Vert_{W_{\mu}^{-l,s,1-\theta}}.
\end{align}

	\textbf{7.   \boldmath $L \times L \rightarrow H$}\\
We employ the bilinear Strichartz estimate $(2.12)$ by dividing the space time Fourier support of $\overline{C_{\ll \mu^2}\psi_{\mu}}$ and $C_{\sim \lambda^2}\psi_{\lambda}$ into pieces of size $L_1$ and $L_2$ respectively:
\begin{align*}
&	\mu^{\beta}\Vert \mathcal{F}(Q_{\sim \mu^2}(\overline{C_{\ll \mu^2}\psi_{\mu}}C_{\ll \lambda^2}\psi_{\lambda}))\Vert_{L_{\tau,\xi}^2}\\
&\lesssim
\mu^{\beta}\sum_{\substack{L_1\ll \mu^2\\L_2\ll \lambda^2}}\frac{\lambda^{\frac{d-1}{2}}}{\mu^{\frac{1}{2}}}(L_1L_2)^{\frac{1}{2}}\Vert C_{L_1} \varphi_{\mu}\Vert_{L_{t,x}^2}\Vert C_{L_2}\psi_{\lambda}\Vert_{L_{t,x}^2}\\
&\lesssim \mu^{\beta-\frac{1}{2}-s}\lambda^{\frac{d-1}{2}-s}~\mu^s\Vert \langle \tau+|\xi|^2\rangle^{\theta} \mathcal{F}(C_{\ll\mu^2}\varphi_{\mu})\Vert_{L_{\tau,\xi}^2} \lambda^{s}\Vert \langle \tau+|\xi|^2\rangle^{\theta} \mathcal{F}(C_{\ll \lambda^2}\psi_{\lambda})\Vert_{L_{\tau,\xi}^2}\\
&\lesssim \mu^{\beta-\frac{1}{2}-s}\lambda^{\frac{d-1}{2}-s}~ \Vert \varphi_{\mu}\Vert_{S_{\mu}^{s,l,\theta}} \Vert \psi_{\lambda} {\Vert_{S_{\lambda}^{s,l,\theta}}}.
\end{align*}
The constraints  $2s-\beta\geqslant \frac{d}{2}-1$ and $s-\beta\geqslant -\frac{1}{2}$ are required for summability.\\
The argument in $(4.9)$ can be mimicked for the vey small spatial frequency case.\\
We conclude that the following are the requirements for the validity of the estimates in the case $\lambda \lesssim \mu$ for $d\leqslant 3$:
\begin{align}
\bullet~ s-\beta\geqslant -\frac{1}{2} \quad \bullet s-l \geqslant \max\Big\{a-b, -\frac{1}{2}+2(\theta-\theta')-2a\Big\} \quad \bullet 2s-l\geqslant \frac{1}{2}+2a-b \quad \bullet 2s-\beta \geqslant \frac{1}{2}+a-b.
\end{align}

	\textbf{Case II. High-high interaction (\boldmath $\mu \lesssim\lambda_1\sim \lambda_2$)}\\ 
We decompose $\overline{\varphi_{\lambda_1}}$ and $\psi_{\lambda_2}$ as follows:\begin{equation*}
\overline{\varphi_{\lambda_1}}  = \overline{C_{\ll \lambda_1^2}\varphi_{\lambda_1}} + \overline{C_{\gtrsim \lambda_1^2}\varphi_{\lambda_1}}, \quad \psi_{\lambda_2} = C_{\ll \lambda_2^2}\psi_{\lambda_2} + C_{\gtrsim \lambda_2^2}\psi_{\lambda_2}
\end{equation*}
The following cases can be distinguished on the basis of the size of the modulation:\\

\textbf{1. \boldmath $H\times H \rightarrow H$}\\
We use H{\"o}lder's and Bernstein's inequality as follows:
\begin{align*}
&	\mu^{\beta}\Vert \mathcal{F}(Q_{\gtrsim \mu^2}(\overline{C_{\gtrsim \lambda_1^2}\varphi_{\lambda_1}}C_{\gtrsim \lambda_2^2}\psi_{\lambda_2})) \Vert_{L_{\tau,\xi}^2}\\
&\lesssim \mu^{\beta+\frac{d}{2}} \Vert \overline{C_{\gtrsim \lambda_1^2}\varphi_{\lambda_1}} \Vert_{L_{t,x}^2} \Vert C_{\gtrsim \lambda_2^2}\psi_{\lambda_2}\Vert_{L_t^{\infty}L_x^2}\\
&\lesssim \mu^{\beta+\frac{d}{2}}\lambda_1^{-2s-2\theta+a-b}\lambda_1^{s'-2a+b}\Vert \langle \tau+|\xi|^2\rangle^{\theta'}\mathcal{F}( (\lambda_1+|\partial_t|)^a C_{\gtrsim \lambda_1^2}\varphi_{\lambda_1})\Vert_{L_{\tau,\xi}^2}  \lambda_2^{s} \Vert C_{\gtrsim \lambda_2^2}\psi_{\lambda_2}\Vert_{L_t^{\infty}L_x^2}\\
&\lesssim \mu^{\beta+\frac{d}{2}}\lambda_1^{-2s-2\theta+a-b} \Vert \varphi_{\lambda_1}\Vert_{S^{s,l,\theta}_{\lambda_1}}\Vert \psi_{\lambda_2}\Vert_{S^{s,l,\theta}_{\lambda_2}}.
\end{align*}
We can sum the estimate in spatial frequencies $\mu\ll \lambda_1\sim \lambda_2$ provided $2s-\beta\geqslant\frac{d}{2}-2\theta+a-b$, by noting that $\beta+\frac{d}{2}>0$ for $d\geqslant 1$ and $\beta$ as in $(2.2)$.\\

\textbf{2. \boldmath $H \times H \rightarrow L$}\\ 
We use H\"older's and Bernstein's inequality to obtain
\begin{align*}
&|I(\mathcal{F}(Q_{\ll \mu^2}\eta_{\mu}),\mathcal{F}(C_{\gtrsim \lambda_2^2}\psi_{\lambda_2}), \mathcal{F}(C_{\gtrsim \lambda_1^2}\varphi_{\lambda_1}))|\\
&\lesssim \mu^{\frac{d}{2}} \Vert C_{\gtrsim \lambda_1^2}\varphi_{\lambda_1} \Vert_{L_{t,x}^2} \Vert C_{\gtrsim \lambda_2^2}\psi_{\lambda_2}\Vert_{L_{t,x}^2} \Vert Q_{\ll \mu^2}\eta_{\mu}\Vert_{L_{t}^{\infty}L_x^2}\\
&	\lesssim\mu^{\frac{d}{2}+l+2\theta}\lambda_1^{-2s-4\theta+2a-b}\lambda_1^{s'-2a+b}\Vert \langle \tau+|\xi|^2\rangle^{\theta'}\mathcal{F}((\lambda_1+|\partial_t|)^a C_{\gtrsim \lambda_1^2}\varphi_{\lambda_1})\Vert_{L_{\tau,\xi}^2}\\ 
&\quad \times \lambda_2^{s'-2a}\Vert \langle \tau+|\xi|^2\rangle^{\theta'}\mathcal{F}( (\lambda_2+|\partial_t|)^a C_{\gtrsim \lambda_2^2}\psi_{\lambda_2}) \Vert_{L_{\tau,\xi}^2}~\mu^{-l-1}\Vert Q_{\ll \mu^2}\eta_{\mu}\Vert_{L_t^pL_x^2}\\
&\lesssim \mu^{\frac{d}{2}+l+2\theta}\lambda_1^{-2s-4\theta+2a-b} \Vert \varphi_{\lambda_1}\Vert_{S^{s,l,\theta}_{\lambda_1}} \Vert \psi_{\lambda_2}\Vert_{S^{s,l,\theta}_{\lambda_2}}\Vert \eta_{\mu}\Vert_{W^{-l,s,1-\theta}_{\mu}},
\end{align*}
where the last inequality follows from the embedding $(2.7)$.\\
By noting that $\frac{d}{2}+l +2\theta> 0$ we conclude that we require $2s-l \geqslant\frac{d}{2}-2\theta+2a-b$ for summability.\\

	\textbf{3.  \boldmath $H \times L \rightarrow H$}\\
From H{\"o}lder's and Bernstein's inequality, we get
\begin{align*}
&\mu^{\beta}\Vert \mathcal{F}(\overline{C_{\gtrsim \lambda_1^2}\varphi_{\lambda_1}}C_{\ll \lambda_2^2}\psi_{\lambda_2}) \Vert_{L_{\tau,\xi}^2}\\
&\lesssim \mu^{\beta+\frac{d}{2}}\Vert\overline{C_{\gtrsim \lambda_1^2}\varphi_{\lambda_1}}\Vert_{L_{t,x}^2} \Vert C_{\ll \lambda_2^2}\psi_{\lambda_2} \Vert_{L_{t}^{\infty}L_x^2}\\
&\lesssim \mu^{\beta+\frac{d}{2}}\lambda_1^{-2s+a-b-2\theta}\lambda_1^{s'-2a+b}\Vert \langle \tau+|\xi|\rangle^{\theta'}\mathcal{F}((\lambda_1+|\partial_t|)^a C_{\gtrsim \lambda_1^2}\varphi_{\lambda_1})\Vert_{L_{\tau,\xi}^2} \lambda_2^s\Vert C_{\ll \lambda_2^2}\psi_{\lambda_2} \Vert_{L_{t}^{\infty}L_x^2}\\
&\lesssim \mu^{\beta+\frac{d}{2}}\lambda_1^{-2s+a-b-2\theta} \Vert \varphi_{\lambda_1}\Vert_{S^{s,l,\theta}_{\lambda_1}} \Vert \psi_{\lambda_2}\Vert_{S^{s,l,\theta}_{\lambda_2}}.
\end{align*}
Provided $2s-\beta\geqslant \frac{d}{2}-2\theta+a-b$, we can obtain $(4.2)$.\\

	\textbf{4. \boldmath $L \times H \rightarrow H$}\\
This interaction can be handled like case 3 with the roles of $\varphi$ and $\psi$ reversed.\\

\textbf{5. \boldmath $L \times L \rightarrow H$}\\
We treat $d=1,2,3$ separately.\\
{\boldmath $d=3:$} We use H{\"o}lder's and Bernstein's inequality and the endpoint Strichartz space $L_t^2 L_x^6$.
\begin{align*}
\mu^{\beta}\Vert \mathcal{F}(C_{\gtrsim \mu^2}(\overline{C_{\ll \lambda_1^2}\varphi_{\lambda_1}} C_{\ll \lambda_1^2}\psi_{\lambda_2}))\Vert_{L_{\tau,\xi}^2}&\lesssim
\mu^{\beta+\frac{1}{2}} \Vert C_{\ll \lambda_1^2}\varphi_{\lambda_1} \Vert_{L_{t}^{2}L_x^6} \Vert C_{\ll \lambda_1^2}\psi_{\lambda_2}\Vert_{L_{t}^{\infty}L_x^2}\\
&\lesssim \mu^{\beta+\frac{1}{2}}\lambda_1^{-2s} \Vert \varphi_{\lambda_1}\Vert_{S^{s,l,\theta}_{\lambda_1}} \Vert \psi_{\lambda_2}\Vert_{S^{s,l,\theta}_{\lambda_2}}.
\end{align*}
{\boldmath $d=2:$} We employ the bilinear Strichartz estimate $(2.12)$ by decomposing the space-time Fourier supports of $\overline{C_{\ll \lambda_1^2}\varphi_{\lambda_1}}$ and $ C_{\ll \lambda_1^2}\psi_{\lambda_2}$ into pieces of size $L_1$ and $L_2$, respectively:
\begin{align*}
\mu^{\beta}\Vert \mathcal{F}(C_{\gtrsim \mu^2}(\overline{C_{\ll \lambda_1^2}\varphi_{\lambda_1}} C_{\ll \lambda_1^2}\psi_{\lambda_2}))\Vert_{L_{\tau,\xi}^2}&\lesssim \mu^{\beta}\sum_{L_1,L_2\ll \lambda_1^2}(L_1L_2)^{\frac{1}{2}}\Vert C_{L_1}\varphi_{\lambda_1}\Vert_{L_{t,x}^2} \Vert C_{L_2}\psi_{\lambda_2}\Vert_{L_{t,x}^2}\\
&\lesssim \mu^{\beta} \lambda_1^{-2s} \Vert \varphi_{\lambda_1}\Vert_{S^{s,l,\theta}_{\lambda_1}} \Vert \psi_{\lambda_1}\Vert_{S^{s,l,\theta}_{\lambda_2}}.
\end{align*}
%\vspace{-4mm}
%For $\mu\sim \lambda_1\sim 1$, we have
%\begin{align*}
%\mu^{\beta}\Vert \mathcal{F}(C_{\gtrsim \mu^2}(\overline{C_{\ll \lambda_1^2}\varphi_{\lambda_1}} C_{\ll \lambda_1^2}\psi_{\lambda_2}))\Vert_{L_{\tau,\xi}^2}\lesssim \Vert C_{\ll \lambda_1^2}\varphi_{\lambda_1}\Vert_{L_t^{\infty}L_x^2} \Vert C_{\ll \lambda_2^2}\psi_{\lambda_2}\Vert_{L_{t,x}^2}\lesssim \Vert \varphi_{\lambda_1}\Vert_{S^{s,l,\theta}_{\lambda_1}} \Vert \psi_{\lambda_1}\Vert_{S^{s,l,\theta}_{\lambda_2}}.
%\end{align*}
{\boldmath $d=1:$}	An application of  H\"older's and Bernstein's inequality gives
\begin{align*}
\mu^{\beta}\Vert \mathcal{F}(C_{\gtrsim \mu^2}(\overline{C_{\ll \lambda_1^2}\varphi_{\lambda_1}} C_{\ll \lambda_1^2}\psi_{\lambda_2}))\Vert_{L_{\tau,\xi}^2}&\lesssim
\mu^{\beta+\frac{1}{2}} \Vert C_{\ll \lambda_1^2}\varphi_{\lambda_1} \Vert_{L_{t}^{\infty}L_x^2} \Vert C_{\ll \lambda_1^2}\psi_{\lambda_2}\Vert_{L_{t,x}^2}
\lesssim \mu^{\beta+\frac{1}{2}}\lambda_1^{-2s} \Vert \varphi_{\lambda_1}\Vert_{S^{s,l,\theta}_{\lambda_1}} \Vert \psi_{\lambda_2}\Vert_{S^{s,l,\theta}_{\lambda_2}}.
\end{align*}
For $d\leqslant 3$, we require $s\geqslant 0, ~2s-\beta\geqslant\frac{1}{2}$ to sum the above subcase.\\

\textbf{6. \boldmath $L \times L \rightarrow L$}\\
\textbf{\boldmath $d=2,3:$} We decompose the Fourier supports of $C_{\ll \lambda_1^2}\varphi_{\lambda_1}, C_{\ll \lambda_2^2} \psi_{\lambda_2}$ and  $Q_{\ll \mu^2}\eta_{\mu}$ into pieces of size $L_1, L_2$ and $L_0$, respectively and then apply lemma $2.4$.
\begin{align*}
&|I(\mathcal{F}(Q_{\ll \mu^2}\eta_{\mu}),\mathcal{F}(C_{\ll \lambda_2^2} \psi_{\lambda_2}),\mathcal{F}(C_{\ll \lambda_1^2}\varphi_{\lambda_1}))|\\
%&\lesssim \Big|I\Big(\sum_{L_1\ll \lambda_1^2}\overline{C_{L_1}\varphi_{\lambda_1}}, \sum_{L_2 \ll \lambda_2^2}C_{L_2}\psi_{\lambda_2}, \sum_{L_0\ll \mu^2}Q_{L_0}\eta_{\mu}\Big)\Big|\\
&\lesssim \sum_{\substack{L_1,L_2\ll \lambda_1^2\\L_0\ll \mu^2}}|I(,\mathcal{F}(C_{L_0}\eta_{\mu}), \mathcal{F}(C_{L_2}\psi_{\lambda_2}), \mathcal{F}(C_{L_1}\varphi_{\lambda_1}))|\\
&\lesssim  \sum_{\substack{L_1,L_2\ll \lambda_1^2\\L_0\ll \mu^2}} \frac{(L_0L_1L_2)^{\frac{1}{2}}}{(\lambda_1)^{\frac{1}{2}}}\log{\lambda_1}\Vert C_{L_1}\varphi_{\lambda_1}\Vert_{L_{t,x}^2} \Vert C_{L_2}\psi_{\lambda_2}\Vert_{L_{t,x}^2} \Vert Q_{L_0}\eta_{\mu}\Vert_{L_{t,x}^2}\\
&\lesssim \lambda_1^{-\frac{1}{2}-2s}\mu^{2\theta+l}\log{\lambda_1} ~ \lambda_1^{s}\Vert \langle \tau+|\xi|^2\rangle^{\theta}\mathcal{F}(C_{\ll \lambda_1^2}\varphi_{\lambda_1})\Vert_{L_{\tau,\xi}^2} \lambda_2^{s}\Vert \langle \tau+|\xi|^2\rangle^{\theta} \mathcal{F}(C_{\ll \lambda_2^2}\psi_{\lambda_2})\Vert_{L_{\tau,\xi}^2}\\
&\quad \times \mu^{-l-1}\Vert \langle \tau-|\xi|\rangle^{1-\theta}\mathcal{F}(Q_{\ll \mu^2}\eta_{\mu})\Vert_{L_{\tau,\xi}^2}\\
&\lesssim \lambda_1^{-\frac{1}{2}-2s}\mu^{2\theta+l}\log{\lambda_1} ~ \Vert \varphi_{\lambda_1}\Vert_{S^{s,l,\theta}_{\lambda_1}} \Vert \psi_{\lambda_2}\Vert_{S^{s,l,\theta}_{\lambda_2}} \Vert \eta_{\mu}\Vert_{W^{-l,s,1-\theta}_{\mu}}.
\end{align*}
\textbf{\boldmath $d=1:$} We apply H{\"o}lder's inequality and bilinear Strichartz estimate $(2.13)$ by decomposing the space-time Fourier supports of $\overline{C_{\ll \lambda_2^2} \psi_{\lambda_2}}$ and $Q_{\ll \mu^2}\eta_{\mu}$  into pieces of size $L_2$ and $L_0$, respectively.
\begin{align*}
&|I(\mathcal{F}(Q_{\ll \mu^2}\eta_{\mu}),\mathcal{F}(C_{\ll \lambda_2^2} \psi_{\lambda_2}),\mathcal{F}(C_{\ll \lambda_1^2}\varphi_{\lambda_1}))|\\
&\lesssim \Vert C_{\ll \lambda_1^2}\varphi_{\lambda_1} \Vert_{L_{t,x}^2}\Vert \overline{C_{\ll \lambda_2^2} \psi_{\lambda_2}} Q_{\ll \mu^2}\eta_{\mu}\Vert_{L_{t,x}^2}\\
&\lesssim \Vert C_{\ll \lambda_1^2}\varphi_{\lambda_1} \Vert_{L_{t,x}^2} \sum_{\substack{L_0\ll \mu^2\\ L_2\ll \lambda_2^2}}\frac{1}{\lambda_2^{1/2}}(L_0L_2)^{1/2}\Vert C_{L_2}\psi_{\lambda_2}\Vert_{L_{t,x}^2} \Vert Q_{L_0}\eta_{\mu}\Vert_{L_{t,x}^2}\\
&\lesssim \mu^{l+2\theta}\lambda_1^{-1/2-2s}\lambda_1^{s}\Vert \langle \tau+|\xi|^2\rangle^{\theta}\mathcal{F}(C_{\ll \lambda_1^2}\varphi_{\lambda_1})\Vert_{L_{\tau,\xi}^2} \lambda_2^{s}\Vert \langle \tau+|\xi|^2\rangle^{\theta}\mathcal{F}(C_{\ll \lambda_2^2}\psi_{\lambda_2})\Vert_{L_{\tau,\xi}^2}\\
&\quad \times  \mu^{-l-1}\Vert \langle \tau-|\xi|\rangle^{1-\theta}\mathcal{F}(Q_{\ll \mu^2}\eta_{\mu})\Vert_{L_{\tau,\xi}^2}\\
&\lesssim \mu^{l+2\theta}\lambda_1^{-1/2-2s} \Vert \varphi_{\lambda_1}\Vert_{S^{s,l,\theta}_{\lambda_1}} \Vert \psi_{\lambda_2}\Vert_{S^{s,l,\theta}_{\lambda_2}} \Vert \eta_{\mu}\Vert_{W^{-l,s,1-\theta}_{\mu}}.
\end{align*}
One requires $2s-l>-\frac{1}{2}+2\theta$ to sum the subcase for $d\leqslant 3$.\\
%In the case $\mu \sim \lambda_1\sim 1$, 
%\begin{align*}
%|I(\overline{\mathcal{F}(C_{\ll \lambda_1^2}\varphi_{\lambda_1})}, \mathcal{F}(C_{\ll \lambda_2^2} \psi_{\lambda_2}), \mathcal{F}(Q_{\ll \mu^2}\eta_{\mu}))|&\lesssim \Vert C_{\ll \lambda_1^2}\varphi_{\lambda_1} \Vert_{L_{t}^{\infty}L_x^2} \Vert C_{\ll \lambda_2^2} \psi_{\lambda_2}\Vert_{L_{t,x}^2} \Vert Q_{\ll \mu^2}\eta_{\mu}\Vert_{L_{t,x}^2}\\ &\lesssim \Vert \varphi_{\lambda_1}\Vert_{S^{s,l,\theta}_{\lambda_1}} \Vert \psi_{\lambda_2}\Vert_{S^{s,l,\theta}_{\lambda_2}} \Vert \eta_{\mu}\Vert_{W^{-l,s,1-\theta}_{\mu}}.
%\end{align*}
To conclude, 
\begin{equation}
\bullet~2s-l>\max\Big \{-\frac{1}{2}+2\theta, \frac{3}{2}-2\theta+2a-b\Big\} \quad \bullet ~ 2s-\beta>\max\Big \{\frac{3}{2}-2\theta+a-b,\frac{1}{2}\Big\}
\end{equation}
are the requirements for summability in this case.

The estimate $(4.4)$ holds  provided $s-l \geqslant -2+2\theta, s\geqslant 0, 2s-l\geqslant -\frac{1}{2}+2\theta$, and follows by imitating the proof of the product estimate $\Vert fg\Vert_{H^{l-2+2\theta}}\lesssim \Vert f \Vert_{H^s} \Vert g \Vert_{H^s}$ for both the cases above.\\
From $(4.10)$ and $(4.11)$, we list the required constraints on the parameters for the estimate $(4.1)$ to hold true for $d\leqslant 3$:
\begin{align*}
&\bullet ~s-l\geqslant \max\{-2+2\theta,a-b,-\frac{1}{2}+2(\theta-\theta')-2a\}\quad \bullet~s-\beta\geqslant -\frac{1}{2} \\
&\bullet~2s-l>\max\Big\{-\frac{1}{2}+2\theta+2a-b,-\frac{1}{2}+2\theta\Big\} \quad
\bullet~2s-\beta\geqslant \max\Big\{\frac{3}{2}-2\theta+a-b,\frac{1}{2}\Big\}.
\end{align*}
\end{proof}

\section{Proof of Theorem 1.1}
Given the non-linear estimates proved in sections 3 and 4, we can achieve a small data local well posedness result, see \cite[section 5]{candy2019zakharov} for a simplified small data local well-posedness argument. To achieve a large data result, we need to extract a small power of $T$ on the RHS of the non-linear estimates.

We start with proving a slightly weaker form of the energy inequalities but with a small power of $T$ on the RHS. Note that for the $X^{s,\theta}$ type part of the norms, the required factor comes from property $(2.8)$. It remains to extract the factor for the $L_t^{\infty}L_x^2$ part of the norm. \\
To that end, we note for $0<T<1$ and a smooth time cutoff $\chi_T$
\begin{align*}
\lambda^{s} \Vert \chi_T\mathcal{I}_S [C_{\sim \lambda^2}P_{\ll \lambda^2}^{(t)} F_{\lambda}]\Vert_{L_{t}^{\infty}L_x^2} \lesssim \lambda^{s} \Vert \chi_{4T} C_{\sim \lambda^2}P_{\ll \lambda^2}^{(t)}F_{\lambda}\Vert_{L_t^1L_x^2}\lesssim \lambda^s T\Vert C_{\sim \lambda^2}P_{\ll \lambda^2}^{(t)}F_{\lambda}\Vert_{L_{t}^{\infty}L_x^2}.
\end{align*}
Interpolating this with $\lambda^s \Vert \chi_T\mathcal{I}_S [C_{\sim \lambda^2}P_{\ll \lambda^2}^{(t)} F_{\lambda}]\Vert_{L_{t}^{\infty}L_x^2} \lesssim \lambda^{s-2}\Vert P_{\ll \lambda^2}^{(t)}F_{\lambda}\Vert_{L_t^{\infty}L_x^2}$, we obtain for some $0<\delta_1\leqslant -\frac{1}{2}+\theta \ll 1$,
\begin{align}
\lambda^{s}\Vert \chi_T\mathcal{I}_S [C_{\sim \lambda^2}P_{\ll \lambda^2}^{(t)} F_{\lambda}]\Vert_{L_{t}^{\infty}L_x^2} \lesssim \lambda^{s-2+2\delta_1}T^{\delta_1}\Vert C_{\sim \lambda^2}P_{\ll \lambda^2}^{(t)}F_{\lambda}\Vert_{L_{t}^{\infty}L_x^2}\lesssim T^{\delta_1}\Vert F_{\lambda}\Vert_{N^{s,l,\theta-1}}
\end{align}
Note that we considered only the low ($\ll \lambda^2$) temporal frequency localised non-linearity because for the other case, we can again use property $(2.8)$ since $m_S(\tau) \gtrsim 1$. \\
As stated, from property $(2.8)$ in section 2.4, for $\delta_2 >0, ~-\frac{1}{2}<\theta-1<\theta-1+\delta_2<\frac{1}{2}$, we have \textit{only} for the $X^{s,\theta}$ part of the $N$ norm
\begin{align}
\Vert F\Vert_{N^{s,l,\theta-1}(T)} \lesssim T^{\delta_2}\Vert F\Vert_{N^{s,l,\theta-1+\delta_2}(T)}.
\end{align}
%\begin{align}
%&\lambda^s \Vert \chi_T(t) \langle \tau+|\xi|^2\rangle^{\theta-1}C_{\ll \lambda^2}F_{\lambda}\Vert_{L_{t,x}^2}  \lesssim T^{\delta_2} \lambda^s \Vert \langle \tau+|\xi|^2\rangle^{\theta-1+\delta_2} F_{\lambda}\Vert_{L_{t,x}^2},\\ \nonumber
%&\lambda^{s-2a} \Vert \chi_T \langle \tau+|\xi|^2\rangle^{\theta-1}(\lambda+|\partial_t|)^a C_{\ll \lambda^2}F_{\lambda}\Vert_{L_{t,x}^2}  \lesssim T^{\delta_2} \lambda^{s-2a} \Vert \langle \tau+|\xi|^2\rangle^{\theta-1+\delta_2}(\lambda+|\partial_t|)^a F_{\lambda}\Vert_{L_{t,x}^2}.
%\end{align}
From $(5.1)$ and $(5.2)$ we conclude 
\begin{equation}
\Vert \chi_T \mathcal{I}_S[F]\Vert_{S^{s,l,\theta}(T)} \lesssim T^{\delta} \Vert F\Vert_{N^{s,l,\tilde{\theta}}(T)} 
\end{equation}
where $\delta = \min\{\delta_1, \delta_2\}>0$ and $\tilde{\theta} = \theta-1+\delta$.\\
Using the same arguments for the wave solution and wave non-linearity, we have
\begin{equation}
\Vert \chi_T \mathcal{I}_W[G]\Vert_{W^{l,s,\theta}(T)} \lesssim T^{\delta} \Vert G\Vert_{R^{l,s,\tilde{\theta}}(T)}.
\end{equation}
We now head to prove theorem $1.1$.
\begin{proof}
	We call $(u,v) \in S^{s,l,\theta}(T)\times W^{l,s,\theta}(T)$ a solution to the system $(2.1)-(2.2)$ with initial data $(u_0,v_0)\in H^s \times H^l$ if
	\begin{align}
	u(t) &= \chi(e^{it\Delta}u_0 +\chi_T\mathcal{I}_S[uRe(v)])(t)\\ \nonumber
	v(t) &= \chi(e^{it|\nabla|}v_0+\chi_T\mathcal{I}_W[|\nabla||u|^2])(t),
	\end{align}
for all $0<T<1$.\\
	To apply a contraction mapping argument, we write $(5.5)$ as $\Gamma(u,v)(t) = (u,v)(t)$. We prove that $\Gamma$ is a contraction on the space $S^{s,l,\theta}(T)\times W^{l,s,\theta}(T)$ for a suitably chosen $T$. Define $R:=\Vert u_0\Vert_{H_x^s} + \Vert v_0\Vert_{H_x^l}$. We shall drop the superscripts on the norms for notational convenience. Then, using lemma 2.2, lemma 2.3, $(5.3), (5.4), (3.1), (4.1)$, we have
	\begin{align*}
	\Vert \Gamma (u,v)\Vert_{S\times W(T)}& = \Vert\chi(t) e^{it\Delta}u_0\Vert_{S(T)} + \Vert \chi_T(t) \mathcal{I}_S[u Re(v)]\Vert_{S(T)} + \Vert \chi(t) e^{it|\nabla|}v_0\Vert_{W(T)} + \Vert \chi_T(t) \mathcal{I}_W[|\nabla||u|^2]\Vert_{W(T)}\\
	&\lesssim( \Vert u_0\Vert_{H^s_x} + T^{\delta}\Vert u\Vert_{S(T)} \Vert v\Vert_{W(T)} + \Vert v_0\Vert_{H^l_x}+ T^{\delta}\Vert u\Vert_{S(T)}^2)\\
	&\leqslant C( \Vert u_0\Vert_{H^s_x} + \Vert v_0\Vert_{H^l_x} + T^{\delta}(\Vert (u,v)\Vert_{S\times W(T)} + \Vert (u,v)\Vert_{S\times W(T)}^2)).
	\end{align*}
	Hence, $\Gamma$ is well-defined on a ball of radius $2CR$ in the space $S^{s,l,\theta}(T)\times W^{l,s,\theta}(T)$ if $T$ is chosen such that $C_1T^{\delta}R < 1$ for some constant $C_1$. In order to show that $\Gamma$ is a contraction, we consider
	\begin{align*}
	\Vert &\Gamma(u_1,v_1)-\Gamma(u_2,v_2)\Vert_{S \times W(T)}\\
	& = \Vert \chi_T(t)\mathcal{I}_S[u_1Re(v_1) - u_2Re(v_2)]\Vert_{S(T)} + \Vert \chi_T(t)\mathcal{I}_W[|\nabla|(|u_1|^2-|u_2|^2)]\Vert_{W(T)}\\
	&\lesssim T^{\delta}(\Vert u_1 Re(v_1)-u_2Re(v_1)+u_2Re(v_1)-u_2Re(v_2)\Vert_{N^{s,l,\tilde{\theta}}(T)} +\Vert |u_1|^2-|u_2|^2\Vert_{R^{l,s,\tilde{\theta}(T)}})\\
	&\lesssim T^{\delta}(\Vert v_1\Vert_{W(T)}\Vert u_1-u_2\Vert_{S(T)}+\Vert u_2\Vert_{S(T)}\Vert v_1-v_2\Vert_{W(T)} +\Vert u_1-u_2\Vert_{S(T)}\Vert u_1+u_2\Vert_{S(T)})\\
	&\lesssim T^{\delta}(\Vert u_1\Vert_{S(T)} +\Vert v_1\Vert_{W(T)}+\Vert u_2\Vert_{S(T)} +\Vert v_2\Vert_{W(T)})(\Vert u_1-u_2\Vert_{S(T)} +\Vert v_1-v_2\Vert_{W(T)})\\
	&=CT^{\delta}(\Vert(u_1,v_1)\Vert_{S \times W(T)} +\Vert (u_2,v_2)\Vert_{S \times W(T)} )(\Vert (u_1-u_2,v_1-v_2)\Vert_{S \times W(T)}),
	\end{align*}
	which becomes a contraction if $T$ is chosen such that $C_2T^{\delta}R < 1$, for a constant $C_2$. This proves that $(u,v)\in S^{s,l,\theta}(T)\times W^{s,l,\theta}(T)$ is a fixed point of $\Gamma$ i.e. a solution to the system $(2.1)-(2.2)$ with initial data $(u_0,v_0)$. Standard arguments then imply uniqueness of the solution and Lipschitz continuity of the data to solution map for the system.
\end{proof}

\textbf{Acknowledgements-} I would like to thank my advisor Prof. Dr Sebastian Herr for numerous discussions, ideas and encouragement throughout the work. I am also grateful to Dr Robert Schippa for helpful suggestions. Financial support by the German Research Foundation through IRTG 2235 is gratefully acknowledged.

	\bibliography{BIB}
	
	\bibliographystyle{apa}
	\vspace{5mm}
\noindent
\author{Akansha Sanwal,}
\address{Faculty of Mathematics,
	Bielefeld University,
	Bielefeld, 33615, Germany.}\\
E-mail - \href{mailto:asanwal@math.uni-bielefeld.de}{\nolinkurl{asanwal@math.uni-bielefeld.de}}
	
%	\nocite{bejenaru2010convolution}
%	\nocite{bejenaru2011convolutions}
%	\nocite{bejenaru2015well}
%	\nocite{bourgain1996wellposedness}
%	\nocite{candy2019zakharov}
%	\nocite{domingues2019note}
%	\nocite{ginibre1997cauchy}
%	\nocite{guo2012small}
%	\nocite{guo2016zakharov}
%	\nocite{ozawa1992existence}
%	\nocite{tao2006nonlinear}
%	\nocite{zakharov1972collapse}
%	\nocite{pecher2001global}
%	\nocite{holmer2006local}
%	\nocite{colliander2008low}
%	\nocite{pecher2004global}
	
	\end{document}